\newtheoremstyle{dotless}{}{}{\itshape}{}{\bfseries}{}{}{}
\theoremstyle{dotless}
\theoremstyle{plain}
\newtheorem{thm}{Theorem}[section]
\newtheorem{lem}[thm]{Lemma}
\newtheorem{cor}[thm]{Corollary}
\theoremstyle{definition}
\newtheorem{defn}[thm]{Definition}
\newtheorem{rem}[thm]{Remark}
\newtheorem{exa}[thm]{Example}
\newtheorem{conv}[thm]{Convention}
\newtheorem*{condPN}{Condition (PN)}{\bf}{\rm}
\newtheorem*{condWR}{Condition (WR)}{\bf}{\rm}
\newcommand{\N} {\mathbb{N}}
\newcommand{\Z} {\mathbb{Z}}
\newcommand{\R} {\mathbb{R}}
\newcommand{\C} {\mathbb{C}}
\newcommand{\K} {\mathbb{K}}
\DeclareMathOperator{\re}{Re}
\DeclareMathOperator{\im}{Im}
\providecommand{\differential}{\mathrm{d}}
\renewcommand{\d}{\differential}
\begin{document}

\title[Surjectivity]{Surjectivity of the $\overline{\partial}$-operator between weighted spaces of smooth vector-valued functions}
\author[K.~Kruse]{Karsten Kruse}
\address{Hamburg University of Technology\\ Institute of Mathematics \\
Am Schwarzenberg-Campus~3 \\
21073 Hamburg \\
Germany}
\email{karsten.kruse@tuhh.de}

\subjclass[2010]{Primary 35A01, 32W05, Secondary 46A32, 46E40}

\keywords{Cauchy-Riemann, weight, smooth, surjective, solvability, Fr\'echet}

\date{\today}
\begin{abstract}
We derive sufficient conditions for the surjectivity of the Cauchy-Riemann operator $\overline{\partial}$ 
between weighted spaces of smooth Fr\'echet-valued functions. This is done by establishing an analog of H\"ormander's theorem on 
the solvability of the inhomogeneous Cauchy-Riemann equation in a space of smooth $\C$-valued functions whose topology
is given by a whole family of weights. Our proof relies on a weakened variant of weak reducibility of the corresponding subspace 
of holomorphic functions in combination with the Mittag-Leffler procedure. Using tensor products, we deduce the corresponding result 
on the solvability of the inhomogeneous Cauchy-Riemann equation for Fr\'echet-valued functions.
\end{abstract}
\maketitle
\section{Introduction}
We study the Cauchy-Riemann operator between weighted spaces of smooth functions with values in a Fr\'echet space. 
Let $E$ be a complete locally convex Hausdorff space over $\C$, $\Omega\subset\R^{2}$ open and 
$\mathcal{E}(\Omega):=\mathcal{C}^{\infty}(\Omega,\C)$ the space of infinitely continuously partially 
differentiable functions from $\Omega$ to $\C$.
It is well-known that the Cauchy-Riemann operator 
\[
 \overline{\partial}:=\frac{1}{2}(\partial_{1}+i\partial_{2})\colon \mathcal{E}(\Omega)\to \mathcal{E}(\Omega)
\]
is surjective (see e.g.\ \cite[Theorem 1.4.4, p.\ 12]{H3}). Since $\mathcal{E}(\Omega)$, 
equipped with the usual topology of uniform convergence of partial derivatives of any order on compact subsets,
is a nuclear Fr\'{e}chet space by \cite[Example 28.9 (1), p.\ 349]{meisevogt1997}, 
we have the topological isomorphy $\mathcal{E}(\Omega,E)\cong\mathcal{E}(\Omega)\widehat{\otimes}_{\pi}E$ by 
\cite[Theorem 44.1, p.\ 449]{Treves} where $\mathcal{E}(\Omega)\widehat{\otimes}_{\pi}E$ is the 
completion of the projective tensor product. Due to classical theory of tensor products, 
the surjectivity of $\overline{\partial}$ implies the surjectivity of 
\[
 \overline{\partial}^{E}\colon \mathcal{E}(\Omega,E)\to \mathcal{E}(\Omega,E)
\]
for Fr\'echet spaces $E$ over $\C$ (see e.g.\ \cite[Satz 10.24, p.\ 255]{Kaballo}) 
where $\mathcal{E}(\Omega,E)$ is the space of infinitely continuously partially differentiable functions 
from $\Omega$ to $E$ and $\overline{\partial}^{E}$ is the Cauchy-Riemann operator for $E$-valued functions. 
In other words, given $f\in\mathcal{E}(\Omega,E)$ there is a solution $u\in\mathcal{E}(\Omega,E)$ of the 
$\overline{\partial}$-problem, i.e.\ 
\begin{equation}\label{eq:intro.0}
 \overline{\partial}^{E}u=f.
\end{equation}
Now, we consider the following situation. Denote by $(p_{\alpha})_{\alpha\in\mathfrak{A}}$ a system 
of seminorms inducing the locally convex Hausdorff topology of $E$. 
Let $f$ fulfil some additional growth conditions given by an increasing family of positive continuous functions 
$\mathcal{V}:=(\nu_{n})_{n\in\N}$ on an increasing sequence of open subsets $(\Omega_{n})_{n\in\N}$ 
of $\Omega$ with $\Omega=\bigcup_{n\in\N}\Omega_{n}$, namely,  
\[
|f|_{n,m,\alpha}:=\sup_{\substack{x\in \Omega_{n}\\ \beta\in\N^{2}_{0},\,|\beta|\leq m}}
p_{\alpha}\bigl((\partial^{\beta})^{E}f(x)\bigr)\nu_{n}(x)<\infty
\]
for every $n\in\N$, $m\in\N_{0}$ and $\alpha\in\mathfrak{A}$. 
Let us call the space of smooth functions having this growth $\mathcal{EV}(\Omega,E)$. 
Then there is always a solution $u\in\mathcal{E}(\Omega,E)$ of \eqref{eq:intro.0}.
Our aim is to derive sufficient conditions such that there is a solution $u$ of \eqref{eq:intro.0} 
having the same growth as the right-hand side $f$. So we are interested under which conditions the Cauchy-Riemann operator
\[
 \overline{\partial}^{E}\colon \mathcal{EV}(\Omega,E)\to \mathcal{EV}(\Omega,E)
\]
is surjective. The interest in solving the vector-valued $\overline{\partial}$-problem arises in \cite{ich} from 
the construction of smooth functions with exponential growth on strips (with holes), which is used to prove the 
flabbyness of the sheaf of vector-valued Fourier hyperfunctions, see \cite[6.8 Lemma, p.\ 118]{ich} 
and \cite[6.11 Theorem, p.\ 136]{ich}. However, the interest in the vector-valued $\overline{\partial}$-problem 
may also be motivated from the scalar-valued problem, namely, from the question of parameter dependence. 
If e.g.\ the right-hand side $f_{\lambda}\in\mathcal{EV}(\Omega)$ depends continuously on a parameter $\lambda\in[0,1]$, 
then there are solutions $u_{\lambda}\in\mathcal{EV}(\Omega)$ of $\overline{\partial}u_{\lambda}=f_{\lambda}$ which 
depend continuously on $\lambda$ as well if the vector-valued $\overline{\partial}$-problem \eqref{eq:intro.0} is 
solvable for the Banach space $E=\mathcal{C}([0,1],\C)$ of continuous $\C$-valued functions on $[0,1]$.

The difficult part is to solve the $\overline{\partial}$-problem in the scalar-valued case, i.e.\ in $\mathcal{EV}(\Omega)$. 
In the case that $\mathcal{V}=(\nu)$ and $\Omega_{n}=\Omega$ for all $n\in\N$ 
where $\nu$ is a weight which permits growth near infinity there is 
a classical result by H\"ormander \cite[Theorem 4.4.2, p.\ 94]{H3} on the solvability of the $\overline{\partial}$-problem 
(in the distributional sense) in weighted spaces of $\C$-valued square-integrable functions of the form
\[
L^{2}\nu(\Omega):=\{f\colon\Omega\to\C\;\text{measurable}\;|\;\int_{\Omega}|f(z)|^{2}\nu(z)\d z<\infty\}.
\]
The opposite situation where the weight $\nu$ permits decay near infinity is handled in \cite[Theorem 1.2, p.\ 351]{Hedenmalm2015} and more general in \cite{Amar2016}.
The solvability of the $\overline{\partial}$-problem in weighted $L^{2}$-spaces and its subspaces of holomorphic functions 
has some nice applications (see \cite{Hoermander2003}) and the properties of the canonical solution operator 
to $\overline{\partial}$ are subject of intense studies \cite{Bonami1990}, \cite{Charpentier2014}, \cite{Haslinger2001}, 
\cite{Haslinger2002}, \cite{Haslinger2007}.

If there is a whole system of weights $\mathcal{V}=(\nu_{n})_{n\in\N}$, i.e.\ the $\overline{\partial}$-problem 
is considered in the projective limit spaces $L^{2}\mathcal{V}(\Omega):=\bigcap_{n\in\N}L^{2}\nu_{n}(\Omega_{n})$ 
or $L^{\infty}\mathcal{V}(\Omega):=\bigcap_{n\in\N}L^{\infty}\nu_{n}(\Omega)$ where
\[
L^{\infty}\nu_{n}(\Omega):=\{f\colon\Omega\to\C\;\text{measurable}\;|\;\sup_{z\in\Omega}|f(z)|\nu_{n}(z)<\infty\},
\]
then solving the $\overline{\partial}$-problem becomes more complicated 
since a whole family of $L^{2}$- resp.\ $L^{\infty}$-estimates has to be satisfied. 
Such a $\overline{\partial}$-problem is usually solved by a combination of H\"ormander's classical result 
with the Mittag-Leffler procedure. However, 
this requires the projective limit $\mathcal{O}^{2}\mathcal{V}(\Omega):=\bigcap_{n\in\N}\mathcal{O}^{2}\nu_{n}(\Omega)$ 
resp.\ $\mathcal{O}^{\infty}\mathcal{V}(\Omega):=\bigcap_{n\in\N}\mathcal{O}^{\infty}\nu_{n}(\Omega)$, where
\[
\mathcal{O}^{k}\nu_{n}(\Omega):=\{f\in L^{k}\nu_{n}(\Omega)\;|\;f\;\text{holomorphic}\},
\]
to be weakly reduced, i.e.\ for every $n\in\N$ there is $m\in\N$ such that $\mathcal{O}^{k}\mathcal{V}(\Omega)$ is 
dense in $\mathcal{O}^{k}\nu_{m}(\Omega)$ with respect to the topology of $\mathcal{O}^{k}\nu_{n}(\Omega)$ 
for $k=2$ resp.\ $k=\infty$, see \cite[Theorem 3, p.\ 56]{Epifanov1992}, \cite[1.3 Lemma, p.\ 418]{Langenbruch1994} 
and \cite[Theorem 1, p.\ 145]{Polyakova2017}. Unfortunately, the weak reducibility of the projective limit is not easy to check. 
Furthermore, in our setting we have to control the growth of the partial derivatives as well and the sequence $(\Omega_{n})_{n\in\N}$ 
usually consists of more than one set.  

Let us outline our strategy to solve the $\overline{\partial}$-problem in $\mathcal{EV}(\Omega,E)$ for Fr\'echet spaces $E$ over $\C$.
In Section 2 we fix the notation and state some preliminaries. 
In Section 3 we phrase sufficient conditions (see Condition $(PN)$) 
such that there is an equivalent system of $L^{q}$-seminorms on $\mathcal{EV}(\Omega)$ (see \prettyref{lem:switch_top}). 
If they are fulfilled for $q=1$, then $\mathcal{EV}(\Omega)$ is a nuclear Fr\'echet space 
by \cite[Theorem 3.1, p.\ 188]{kruse2018_4}. If they are fulfilled for $q=2$ as well, 
we can use H\"ormander's $L^{2}$-machinery and the hypoellipticity of $\overline{\partial}$ 
to solve the scalar-valued equation \eqref{eq:intro.0}
on each $\Omega_{n}$ with given $f\in\mathcal{EV}(\Omega)$ and  
a solution $u_{n}\in\mathcal{E}(\Omega_{n})$ satisfying $|u_{n}|_{n,m}<\infty$ for every $m\in\N_{0}$. 
In Section 4 the solution $u\in\mathcal{EV}(\Omega)$ is then constructed 
from the $u_{n}$ by using the Mittag-Leffler procedure in our main \prettyref{thm:scalar_CR_surjective}, 
which requires a density condition on the kernel of $\overline{\partial}$. 
Due to \cite[Example 16 c), p.\ 1526]{kruse2017} we have $\mathcal{EV}(\Omega,E)\cong\mathcal{EV}(\Omega)\widehat{\otimes}_{\pi}E$ 
if Condition $(PN)$ holds for $q=1$ and are able to
lift the surjectivity from the scalar-valued to the Fr\'echet-valued case in \prettyref{cor:frechet_CR_surjective}. 
This density condition can be regarded 
as a weakened variant of weak reducibility of the subspace of $\mathcal{EV}(\Omega)$ consisting of holomorphic functions.
In our last section we state sufficient conditions on $\mathcal{V}$ and $(\Omega_{n})_{n\in\N}$ in \prettyref{thm:dense_proj_lim} for our density condition 
to hold that are more likely to be checked. Further, we give examples of weights $\mathcal{V}$ and sets 
$(\Omega_{n})_{n\in\N}$ that satisfy our conditions in \prettyref{ex:families_of_weights_2}.
The stated results are obtained by generalising the methods in \cite[Chap.\ 5]{ich} where 
the special case $\nu_{n}(z):=\exp(-|\re(z)|/n)$ and, amongst others, 
$\Omega_{n}:=\{z\in\C\;|\; 1/n<|\im(z)|<n\}$ is treated 
(see \cite[5.16 Theorem, p.\ 80]{ich} and \cite[5.17 Theorem, p.\ 82]{ich}).

\section{Notation and Preliminaries}
We define the distance of two subsets $M_{0}, M_{1} \subset\R^{d}$, $d\in \N$, w.r.t.\ a norm $\|\cdot\|$ on $\R^{d}$ via
\[
  \d^{\|\cdot\|}(M_{0},M_{1}) 
:=\begin{cases}
   \inf_{x\in M_{0},\,y\in M_{1}}\|x-y\| &,\;  M_{0},\,M_{1} \neq \emptyset, \\
   \infty &,\;  M_{0}= \emptyset \;\text{or}\; M_{1}=\emptyset.
  \end{cases}
\]
Moreover, we denote by $\|\cdot\|_{\infty}$ the sup-norm, by $|\cdot|$ the Euclidean norm, by 
$\langle\cdot|\cdot\rangle$ the usual scalar product on $\R^{d}$ and by
$\mathbb{B}_{r}(x):=\{w\in\R^{d}\;|\;|w-x|<r\}$ the Euclidean ball around $x\in\R^{d}$ with radius $r>0$.
We denote the complement of a subset $M\subset \R^{d}$ by $M^{C}:= \R^{d}\setminus M$,
the set of inner points of $M$ by $\mathring{M}$, the closure of $M$ by $\overline{M}$ and the boundary of $M$ by $\partial M$.
Further, we also use for $z=(z_{1},z_{2})\in\R^{2}$ a notation of mixed-type
\[
z=z_{1}+iz_{2}
 =(z_{1},z_{2})
 =\begin{pmatrix}
  z_{1}\\
  z_{2}
  \end{pmatrix},
\]
hence identify $\R^{2}$ and $\C$ as (normed) vector spaces.
For a function $f\colon M\to\C$ and $K\subset M$ we denote by $f_{\mid K}$ the restriction of $f$ to $K$ and by 
\[
 \|f\|_{K}:=\sup_{x\in K}|f(x)|
\]
the sup-norm on $K$.

By $E$ we always denote a non-trivial locally convex Hausdorff space over the field 
$\K=\R$ or $\C$ equipped with a directed fundamental system of 
seminorms $(p_{\alpha})_{\alpha\in \mathfrak{A}}$. 
If $E=\K$, then we set $(p_{\alpha})_{\alpha\in \mathfrak{A}}:=\{|\cdot|\}$. 
Further, we denote by $L(F,E)$ the space of continuous linear maps from 
a locally convex Hausdorff space $F$ to $E$. If $E=\K$, we write $F':=L(F,\K)$ for the dual space of $F$.

We recall the following well-known definitions concerning continuous partial differentiability of 
vector-valued functions (c.f.\ \cite[p.\ 237]{kruse2018_2}). A function $f\colon\Omega\to E$ on an open set 
$\Omega\subset\mathbb{R}^{d}$ to $E$ is called continuously partially differentiable ($f$ is $\mathcal{C}^{1}$) 
if for the $n$-th unit vector $e_{n}\in\mathbb{R}^{d}$ the limit
\[
(\partial^{e_{n}})^{E}f(x):=(\partial_{x_{n}})^{E}f(x):=(\partial_{n})^{E}f(x)
:=\lim_{\substack{h\to 0\\ h\in\mathbb{R}, h\neq 0}}\frac{f(x+he_{n})-f(x)}{h}
\]
exists in $E$ for every $x\in\Omega$ and $(\partial^{e_{n}})^{E}f$ 
is continuous on $\Omega$ ($(\partial^{e_{n}})^{E}f$ is $\mathcal{C}^{0}$) for every $1\leq n\leq d$.
For $k\in\mathbb{N}$ a function $f$ is said to be $k$-times continuously partially differentiable 
($f$ is $\mathcal{C}^{k}$) if $f$ is $\mathcal{C}^{1}$ and all its first partial derivatives are $\mathcal{C}^{k-1}$.
A function $f$ is called infinitely continuously partially differentiable ($f$ is $\mathcal{C}^{\infty}$) 
if $f$ is $\mathcal{C}^{k}$ for every $k\in\mathbb{N}$.
The linear space of all functions $f\colon\Omega\to E$ which are $\mathcal{C}^{\infty}$ 
is denoted by $\mathcal{C}^{\infty}(\Omega,E)$. 
Let $f\in\mathcal{C}^{\infty}(\Omega,E)$. For $\beta=(\beta_{n})\in\mathbb{N}_{0}^{d}$ we set 
$(\partial^{\beta_{n}})^{E}f:=f$ if $\beta_{n}=0$, and
\[
(\partial^{\beta_{n}})^{E}f
:=\underbrace{(\partial^{e_{n}})^{E}\cdots(\partial^{e_{n}})^{E}}_{\beta_{n}\text{-times}}f
\]
if $\beta_{n}\neq 0$ as well as 
\[
(\partial^{\beta})^{E}f
:=(\partial^{\beta_{1}})^{E}\cdots(\partial^{\beta_{d}})^{E}f.
\]
Due to the vector-valued version of Schwarz' theorem $(\partial^{\beta})^{E}f$ is independent of the order of the partial 
derivatives on the right-hand side, we call $|\beta|:=\sum_{n=1}^{d}\beta_{n}$ the order of differentiation 
and write $\partial^{\beta}f:=(\partial^{\beta})^{\K}f$. Now, the precise definition 
of the weighted spaces of smooth vector-valued functions from the introduction reads as follows. 

\begin{defn}[{\cite[Definition 3.2, p.\ 238]{kruse2018_2}}]\label{def:smooth_weighted_space}
Let $\Omega\subset\R^{d}$ be open and $(\Omega_{n})_{n\in\N}$ a family of non-empty
open sets such that $\Omega_{n}\subset\Omega_{n+1}$ and $\Omega=\bigcup_{n\in\N} \Omega_{n}$.
Let $\mathcal{V}:=(\nu_{n})_{n\in\N}$ be a countable family of positive continuous functions 
$\nu_{n}\colon \Omega \to (0,\infty)$ such that $\nu_{n}\leq\nu_{n+1}$ for all $n\in\N$.
We call $\mathcal{V}$ a (directed) family of continuous weights on $\Omega$ and set
\[
\mathcal{E}\nu_{n}(\Omega_{n}, E):= \{ f \in \mathcal{C}^{\infty}(\Omega_{n}, E)\; | \;
\forall\;\alpha\in\mathfrak{A},\,m \in \N_{0}^{d}:\; |f|_{n,m,\alpha} < \infty \}
\]
for $n\in\N$ and 
\[
\mathcal{EV}(\Omega, E):=\{ f\in \mathcal{C}^{\infty}(\Omega, E)\; | \;\forall\; n \in \N:
\; f_{\mid\Omega_{n}}\in \mathcal{E}\nu_{n}(\Omega_{n}, E)\}
\]
where
\[
|f|_{n,m,\alpha}:=\sup_{\substack{x \in \Omega_{n}\\ \beta \in \N_{0}^{d}, \, |\beta| \leq m}}
p_{\alpha}\bigl((\partial^{\beta})^{E}f(x)\bigr)\nu_{n}(x).
\]
The subscript $\alpha$ in the notation of the seminorms is omitted in the scalar-valued case. 
The notation for the spaces in the scalar-valued case is 
$\mathcal{E}\nu_{n}(\Omega_{n}):=\mathcal{E}\nu_{n}(\Omega_{n},\K)$ 
and $\mathcal{EV}(\Omega):=\mathcal{EV}(\Omega,\K)$.
\end{defn}

The space $\mathcal{EV}(\Omega,E)$ is a projective limit, namely, we have
\[
\mathcal{EV}(\Omega, E)\cong \lim_{\substack{\longleftarrow\\n\in \N}}\mathcal{E}\nu_{n}(\Omega_{n}, E)
\]
where the spectral maps are given by the restrictions
\[
\pi_{k,n}\colon \mathcal{E}\nu_{k}(\Omega_{k}, E)\to \mathcal{E}\nu_{n}(\Omega_{n}, E),\;
f\mapsto f_{\mid\Omega_{n}},\;k\geq n.
\]
The space of scalar-valued infinitely differentiable functions with compact support in an open set $\Omega\subset\R^{d}$ 
is defined by the inductive limit
\[
\mathcal{D}(\Omega):=\lim_{\substack{\longrightarrow\\K\subset \Omega\;\text{compact}}}\,\mathcal{C}^{\infty}_{c}(K)
\]
where
\[
\mathcal{C}^{\infty}_{c}(K):=\{f\in\mathcal{C}^{\infty}(\R^{d},\K)\;|\;\forall \;x\notin K:\;f(x)=0\}.
\]
Every element $f$ of $\mathcal{D}(\Omega)$ can be regarded as an element of $\mathcal{D}(\R^{d})$ 
just by setting $f:=0$ on $\Omega^{C}$ and we write $\operatorname{supp} f$ for the support of $f$.
Moreover, we set for $m\in\N_{0}$ and $f\in\mathcal{D}(\R^{d})$
\[
\|f\|_{m}:=\sup_{\substack{x\in\R^{d}\\ \alpha\in\N^{d}_{0},\;|\alpha|\leq m}}|\partial^{\alpha}f(x)|.
\]
By $L^{1}(\Omega)$ we denote the space of (equivalence classes of) $\K$-valued Lebesgue integrable functions on $\Omega$,
by $L^{q}(\Omega)$, $q\in\N$, the space of functions $f$ such that $f^{q}\in L^{1}(\Omega)$ and 
by $L^{q}_{loc}(\Omega)$ the corresponding space of locally integrable functions. 
For a locally integrable function $f\in L^{1}_{loc}(\Omega)$ we denote by $T_{f}\in\mathcal{D}'(\Omega):=\mathcal{D}(\Omega)'$ 
the regular distribution defined by
\[
T_{f}(\varphi):=\int_{\R^{d}} {f(x)\varphi(x)\d x},\quad \varphi\in \mathcal{D}(\Omega).
\]
For $\alpha\in\N^{d}_{0}$ the partial derivatives of a distribution $T\in\mathcal{D}'(\Omega)$ are defined by
\[
\partial^{\alpha}T(\varphi):=\langle\partial^{\alpha}T,\varphi\rangle
:=(-1)^{|\alpha|}T(\partial^{\alpha}\varphi),\quad\varphi\in\mathcal{D}(\Omega).
\]
The convolution $T\ast\varphi$ of a distribution $T\in\mathcal{D}'(\R^{d})$ and a test function 
$\varphi\in \mathcal{D}(\R^{d})$ is defined by
\[
(T\ast\varphi)(x):=T(\varphi(x-\cdot)),\quad x\in\R^{d}.
\]
In particular, we have $\delta\ast\varphi=\varphi$ for the Dirac distribution $\delta$ and
\begin{equation}\label{distr.falt.}
(T_{f}\ast\varphi)(x)=\int_{\R^{d}} {f(y)\varphi(x-y)\d y},\quad x\in\R^{d},
\end{equation}
for $f\in L^{1}_{loc}(\R^{d})$ and $\varphi\in \mathcal{D}(\R^{d})$.
Furthermore, $\partial^{\alpha}(T\ast\varphi)=(\partial^{\alpha}T)\ast\varphi=T\ast(\partial^{\alpha}\varphi)$ 
is valid for $T\in\mathcal{D}'(\R^{d})$ and $\varphi\in \mathcal{D}(\R^{d})$. 
For more details on the theory of distributions see \cite{H1}.

By $\mathcal{O}(\Omega)$ we denote the space of $\C$-valued holomorphic functions 
on an open set $\Omega\subset\C$ and for $\alpha=(\alpha_{1},\alpha_{2})\in\N^{2}_{0}$ we often use the relation 
\begin{equation}\label{lem1}
\partial^{\alpha}f(z)=i^{\alpha_{2}}f^{(|\alpha|)}(z),\quad z\in\Omega,
\end{equation}
between real partial derivatives $\partial^{\alpha}f$ and complex derivatives $f^{(|\alpha|)}$ 
of a function $f\in\mathcal{O}(\Omega)$ (see e.g.\ \cite[3.4 Lemma, p.\ 17]{ich}).
\section{From \texorpdfstring{$\sup$}{sup}- to \texorpdfstring{$L^{q}$}{Lq}-seminorms}
For applying H\"ormander's solution of the weighted $\overline{\partial}$-problem (see \cite[Chap.\ 4]{H3})
it is appropriate to consider weighted $L^{2}$-(semi)norms
and use them to control the seminorms $|\cdot|_{n,m}$ of solutions $u_{n}$ of $\overline{\partial}u_{n}=f$ 
in weighted $L^{2}$-spaces on $\Omega_{n}$ for given $f\in\mathcal{EV}(\Omega)$.

Throughout this section let $P$ be a polynomial in $d$ real variables with constant coefficients in $\K$, 
i.e.\ there are $n\in\N_{0}$ and $c_{\alpha}\in\K$ for $\alpha=(\alpha_{i})\in\N_{0}^{d},$ $|\alpha|\leq n$, such that
\[
P(\zeta)=\sum_{\substack{\alpha\in\N^{d}_{0},\\|\alpha|\leq n}}c_{\alpha}\zeta^{\alpha},\quad \zeta=(\zeta_{i})\in\R^{d},
\]
where $\zeta^{\alpha}:=\zeta^{\alpha_{1}}_{1}\cdots\zeta^{\alpha_{d}}_{d}$, and 
$P(\partial)$ be the linear partial differential operator associated to $P$.

\begin{lem}\label{lem:iso_sobolev}
Let $U\subset \R^{d}$ be open, $\{K_{n}\;|\; n\in\N\}$ a compact exhaustion of $U$, $P(\partial)$ a hypoelliptic 
partial differential operator and $q\in\N$. Then
\[
\mathcal{I}\colon \mathcal{C}^{\infty}(U)\rightarrow 
\mathcal{F}(U):=\{f\in L^{q}_{loc}(U)\;|\;\forall\;\alpha\in\N^{d}_{0}:\;
\partial^{\alpha}P(\partial)f\in L^{q}_{loc}(U)\},\;\mathcal{I}(f):=[f],
\]
is a topological isomorphism where $[f]$ is the equivalence class of $f$, the first space 
is equipped with the system of seminorms $\{|\cdot|_{K_{n},m}\;|\;n\in\N,\,m\in\N_{0}\}$ defined by
\begin{equation}\label{lem8.1}
|f|_{K_{n},m}:=\sup_{\substack{x\in K_{n} \\ \alpha\in\N^{d}_{0},|\alpha|\leq m}}|\partial^{\alpha}f(x)|,
\quad f\in \mathcal{C}^{\infty}(U),
\end{equation}
and the latter with the system
\begin{equation}\label{lem8.2}
\{\|\cdot\|_{L^{q}(K_{n})}+s_{K_{n},m}\;|\;\;n\in\N,\,m\in\N_{0}\}
\end{equation}
defined for $f=[F]\in\mathcal{F}(U)$ by 
\[
\|f\|_{L^{q}(K_{n})}:=\|F\|_{L^{q}(K_{n})}:=\bigl(\int_{K_{n}}{|F(x)|^{q} \d x}\bigr)^{\frac{1}{q}}
\]
and 
\[
s_{K_{n},m}(f):=\sup_{\alpha\in\N^{d}_{0},|\alpha|\leq m}\|\partial^{\alpha}P(\partial)f\|_{L^{q}(K_{n})}.
\]

\end{lem}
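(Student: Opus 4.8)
The plan is to recognise $\mathcal{I}$ as a continuous linear bijection between two Fréchet spaces and let the open mapping theorem supply the continuity of the inverse, thereby avoiding a direct (and delicate) estimate of the derivatives of $f$ by the $L^{q}$-norms of $P(\partial)f$. First I would check that both spaces are Fréchet. The source $\mathcal{C}^{\infty}(U)$ carries its usual complete metrizable topology, and the target $\mathcal{F}(U)$ is metrizable because the system \eqref{lem8.2} is countable. For completeness of $\mathcal{F}(U)$ I would take a Cauchy sequence $[F_{j}]$, pass to $L^{q}_{loc}(U)$-limits $F$ of $(F_{j})$ and $g_{\alpha}$ of $(\partial^{\alpha}P(\partial)F_{j})$, and use that $L^{q}_{loc}$-convergence entails convergence in $\mathcal{D}'(U)$ together with the continuity of distributional differentiation to identify $g_{\alpha}=\partial^{\alpha}P(\partial)F$; this shows $[F]\in\mathcal{F}(U)$ and $[F_{j}]\to[F]$.

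The elementary properties of $\mathcal{I}$ come next. Well-definedness is clear since a smooth function and all of its derivatives lie in $L^{q}_{loc}(U)$, so that $\mathcal{I}(f)\in\mathcal{F}(U)$; injectivity follows because $[f]=0$ forces $f=0$ almost everywhere and hence, by continuity, everywhere. For continuity I would bound $\|f\|_{L^{q}(K_{n})}\leq\lambda(K_{n})^{1/q}|f|_{K_{n},0}$, where $\lambda$ denotes Lebesgue measure, and, writing $N$ for the order of $P(\partial)$, estimate $s_{K_{n},m}(f)\leq C\,\lambda(K_{n})^{1/q}|f|_{K_{n},m+N}$ with $C=\sum_{|\beta|\leq N}|c_{\beta}|$ depending only on the coefficients of $P$; consequently every seminorm in \eqref{lem8.2} is dominated by a single seminorm of the form \eqref{lem8.1}.

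The crux is surjectivity, and this is where I expect the only genuine obstacle. Given $[F]\in\mathcal{F}(U)$, set $g:=P(\partial)F\in\mathcal{D}'(U)$. By the very definition of $\mathcal{F}(U)$ one has $\partial^{\alpha}g\in L^{q}_{loc}(U)$ for every $\alpha\in\N^{d}_{0}$, so $g$ belongs to every local Sobolev space $W^{m,q}_{loc}(U)$; the Sobolev embedding theorem (valid for each $q\in\N$, including the borderline $q=1$) then provides a representative $g\in\mathcal{C}^{\infty}(U)$. Since $P(\partial)T_{F}=T_{g}$ has empty singular support, the hypoellipticity of $P(\partial)$ forces $T_{F}$ to have empty singular support as well, i.e.\ $F$ agrees almost everywhere with some $f\in\mathcal{C}^{\infty}(U)$ and $\mathcal{I}(f)=[F]$.

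With $\mathcal{I}$ now established as a continuous linear bijection between Fréchet spaces, the open mapping theorem yields the continuity of $\mathcal{I}^{-1}$ and hence the asserted topological isomorphism. The genuinely non-formal point is the regularity bootstrap in the surjectivity step: the passage from ``$g$ and all of its derivatives are merely locally $L^{q}$'' to ``$g$ is smooth'' via Sobolev embedding, followed by the hypoelliptic implication $P(\partial)F\in\mathcal{C}^{\infty}\Rightarrow F\in\mathcal{C}^{\infty}$. Everything else is either a routine seminorm estimate or a standard limit argument, and the continuity of the inverse is relegated to the open mapping theorem rather than proved by hand.
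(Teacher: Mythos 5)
Your proposal is correct and follows essentially the same route as the paper's proof: show both spaces are Fr\'echet (completeness of $\mathcal{F}(U)$ via $L^{q}_{loc}$-limits and continuity of distributional differentiation), verify that $\mathcal{I}$ is a continuous linear injection by the same elementary seminorm estimates, obtain surjectivity from the Sobolev embedding theorem applied to $P(\partial)f\in W^{\infty,q}_{loc}(U)$ followed by hypoellipticity, and conclude with the open mapping theorem. The only cosmetic difference is that you phrase the hypoelliptic step via singular supports and make the continuity constants explicit, neither of which changes the argument.
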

\begin{proof}
First, let us remark the following. 
The derivatives in the definition of $\mathcal{F}(U)$ are considered in the distributional sense and 
$\partial^{\alpha}P(\partial)f\in L^{q}_{loc}(U)$ means that 
there exists $g\in L^{q}_{loc}(U)$ such that $\partial^{\alpha}P(\partial)T_{f}=T_{g}$. 
The definition of the seminorm $\|\cdot\|_{L^{q}(K_{n})}$ 
does not depend on the chosen representative and we make no strict difference between an element 
of $L^{q}_{loc}(U)$ and its representative.

$(i)$ $\mathcal{C}^{\infty}(U)$, equipped with the system of seminorms \eqref{lem8.1}, is known to be a Fr\'echet space.
The space $\mathcal{F}(U)$, equipped with the system of seminorms \eqref{lem8.2}, is a metrisable locally convex space. 
Let $(f_{k})_{k\in\N}$ be a Cauchy sequence in 
$\mathcal{F}(U)$. By definition of $\mathcal{F}(U)$ we get for all $\beta\in\N^{d}_{0}$ that there exists a sequence
$(g_{k,\beta})_{k\in\N}$ in $L^{q}_{loc}(U)$ such that $\partial^{\beta}P(\partial)T_{f_{k}}=T_{g_{k,\beta}}$. 
Therefore we conclude from \eqref{lem8.2} that $(f_{k})_{k\in\N}$ and $(g_{k,\beta})_{k\in\N}$, 
$\beta\in\N^{d}_{0}$, are Cauchy sequences in $(L^{q}_{loc}(U),(\|\cdot\|_{L^{q}(K_{n})})_{n\in\N})$, 
which is a Fr\'echet space by \cite[5.17 Lemma, p.\ 36]{F/W/Buch}, so they have a limit $f$ resp.\ $g_{\beta}$ in this space. 
Since $(f_{k})_{k\in\N}$ resp.\ $(g_{k,\beta})_{k\in\N}$ converges to $f$ resp.\ $g_{\beta}$ in $L^{q}_{loc}(U)$, 
it follows that $(T_{f_{k}})_{k\in\N}$ resp.\ $(T_{g_{k,\beta}})_{k\in\N}$ converges to 
$T_{f}$ resp.\ $T_{g_{\beta}}$ in $\mathcal{D}_{\sigma}'(U)$. Here $\mathcal{D}_{\sigma}'(U)$ is the space 
$\mathcal{D}'(U)$ equipped with the weak$^{\ast}$-topology. Hence we get
\[	
\partial^{\beta}P(\partial)T_{f}\underset{k\to \infty}{\leftarrow}
\partial^{\beta}P(\partial)T_{f_{k}}=T_{g_{k,\beta}}\underset{k\to \infty}{\rightarrow}T_{g_{\beta}}
\]	
in $\mathcal{D}_{\sigma}'(U)$, implying $f\in \mathcal{F}(U)$ and the convergence of $(f_{k})_{k\in\N}$ 
to $f$ in $\mathcal{F}(U)$ with respect to the seminorms \eqref{lem8.2} as well. 
Thus this space is complete and so a Fr\'echet space.

$(ii)$ $\mathcal{I}$ is obviously linear and injective. It is continuous as for all $n\in\N$ and $m\in\N_{0}$ we have
\[
\|\mathcal{I}(f)\|^{q}_{L^{q}(K_{n})}\leq \lambda(K_{n})|f|_{K_{n},0}^{q}
\]
and there exists $C>0$, only depending on the coefficients and the number of summands of $P(\partial)$, such that
\[
s_{n,m}(\mathcal{I}(f))^{q}\leq C^{q}\lambda(K_{n})|f|_{K_{n},\operatorname{deg}P+m}^{q}
\]
for all $f\in \mathcal{C}^{\infty}(U)$ where $\lambda$ denotes the Lebesgue measure.

$(iii)$ The next step is to prove that $\mathcal{I}$ is surjective. Let $f\in \mathcal{F}(U)$. 
Then we have $P(\partial)f\in W^{\infty,q}_{loc}(U)$ where
\[
W^{\infty,q}_{loc}(U):=\{f\in L^{q}_{loc}(U)\;|\;\forall\;\alpha\in\N^{d}_{0}:
\;\partial^{\alpha}f\in L^{q}_{loc}(U)\}
\]
and so $P(\partial)f\in\mathcal{C}^{\infty}(U)$ by the Sobolev embedding theorem 
\cite[Theorem 4.5.13, p.\ 123]{H1} in combination with \cite[Theorem 3.1.7, p.\ 59]{H1}. 
To be precise, this means that the regular distribution 
$P(\partial)f$ has a representative in $\mathcal{C}^{\infty}(U)$. 
Due to the hypoellipticity of $P(\partial)$ we obtain $f\in\mathcal{C}^{\infty}(U)$, more precisely, 
that $f$ has a representative in $\mathcal{C}^{\infty}(U)$, so $\mathcal{I}$ is surjective. 

Finally, our statement follows from $(i)-(iii)$ and the open mapping theorem.
\end{proof}

\begin{cor}\label{cor:iso_sobolev} 
Let $P(\partial)$ be a hypoelliptic partial differential operator, $q\in\N$ and $0<r_{0}<r_{1}<r_{2}$. Then we have
\begin{flalign*}
\forall\; m\in\N_{0}\;&\exists\; l\in\N_{0},\, C>0\;\forall \;\alpha\in\N^{d}_{0},\,|\alpha|\leq m, \,
f\in\mathcal{C}^{\infty}(\mathring{Q}_{r_{2}}(0)): \\
&\|\partial^{\alpha}f\|_{Q_{r_{0}}(0)}\leq
C\bigl(\|f\|_{L^{q}(Q_{r_{1}}(0))}+\sup_{\beta\in\N^{d}_{0},|\beta|\leq l}
\|\partial^{\beta}P(\partial)f\|_{L^{q}(Q_{r_{1}}(0))}\bigr)
\end{flalign*}
where $Q_{r}(0):=[-r,r]^{d}$, $r>0$.
\end{cor}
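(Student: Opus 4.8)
The plan is to deduce the estimate directly from the topological isomorphism $\mathcal{I}$ of \prettyref{lem:iso_sobolev}, the point being that an interior estimate on the fixed cube $Q_{r_{0}}(0)$ should be nothing but the continuity of the inverse $\mathcal{I}^{-1}$ read off on a single seminorm. The only genuine issue is that the $L^{q}$-norms on the right-hand side must live on $Q_{r_{1}}(0)$ and not on some larger cube; I would enforce this from the outset by applying \prettyref{lem:iso_sobolev} not to $\mathring{Q}_{r_{2}}(0)$ but to the intermediate open cube $U:=\mathring{Q}_{r_{1}}(0)$, so that every compact set occurring in the chosen exhaustion is automatically contained in $Q_{r_{1}}(0)$.

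Concretely, fix $m\in\N_{0}$. Since $r_{0}<r_{1}$, the closed cube $Q_{r_{0}}(0)$ is a compact subset of the open set $U=\mathring{Q}_{r_{1}}(0)$, so I can choose a compact exhaustion $\{K_{n}\mid n\in\N\}$ of $U$ with $K_{1}=Q_{r_{0}}(0)$. By \prettyref{lem:iso_sobolev} the map $\mathcal{I}\colon\mathcal{C}^{\infty}(U)\to\mathcal{F}(U)$ is a topological isomorphism, hence $\mathcal{I}^{-1}$ is continuous. Applying continuity to the single seminorm $|\cdot|_{K_{1},m}$ on $\mathcal{C}^{\infty}(U)$ and using that the system \eqref{lem8.2} is directed (since $K_{n}\subset K_{n+1}$ makes both $\|\cdot\|_{L^{q}(K_{n})}$ and $s_{K_{n},m}$ increasing in $n$, while $s_{K_{n},m}$ is increasing in $m$), I obtain $l\in\N_{0}$, $n'\in\N$ and $C>0$ with
\[
|\mathcal{I}^{-1}(g)|_{K_{1},m}\leq C\bigl(\|g\|_{L^{q}(K_{n'})}+s_{K_{n'},l}(g)\bigr),\qquad g\in\mathcal{F}(U).
\]
Here $l$ and $C$ depend only on $m$ (and on $r_{0},r_{1},q,P$), not on the argument, which matches the order of quantifiers in the assertion.

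Now, given $f\in\mathcal{C}^{\infty}(\mathring{Q}_{r_{2}}(0))$ I would restrict it to $U$, which is legitimate because $r_{1}<r_{2}$ gives $U\subset\mathring{Q}_{r_{2}}(0)$, and insert $g:=\mathcal{I}(f_{\mid U})$, noting $\mathcal{I}^{-1}(g)=f_{\mid U}$. Since $f$ is smooth, its distributional and classical derivatives coincide, so $\|g\|_{L^{q}(K_{n'})}=\|f\|_{L^{q}(K_{n'})}$ and $s_{K_{n'},l}(g)=\sup_{|\beta|\leq l}\|\partial^{\beta}P(\partial)f\|_{L^{q}(K_{n'})}$. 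The crucial inclusion $K_{n'}\subset U\subset Q_{r_{1}}(0)$ then upgrades both $L^{q}$-norms over $K_{n'}$ to $L^{q}$-norms over $Q_{r_{1}}(0)$, while on the left $\|\partial^{\alpha}f\|_{Q_{r_{0}}(0)}\leq|f|_{K_{1},m}$ for every $\alpha\in\N^{d}_{0}$ with $|\alpha|\leq m$. Combining these inequalities yields exactly the claimed estimate; all quantities are finite because $Q_{r_{1}}(0)\subset\mathring{Q}_{r_{2}}(0)$, which is precisely the role of the auxiliary radius $r_{2}$.

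I expect the main obstacle to be not any hard analysis — \prettyref{lem:iso_sobolev} already encapsulates the Sobolev embedding and the hypoellipticity of $P(\partial)$ — but the bookkeeping that keeps the right-hand side localized to $Q_{r_{1}}(0)$. The naive choice $U=\mathring{Q}_{r_{2}}(0)$ would only produce a bound in terms of $L^{q}$-norms over some $K_{n'}$ that could exhaust $\mathring{Q}_{r_{2}}(0)$ and thus be strictly larger than $Q_{r_{1}}(0)$; choosing $U=\mathring{Q}_{r_{1}}(0)$ removes this difficulty at no cost, since an interior estimate on $Q_{r_{0}}(0)$ only ever needs information on a slightly larger neighborhood. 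The one minor point to verify is the passage from the finitely many seminorms furnished by continuity to the single dominating seminorm, which is exactly the directedness of \eqref{lem8.2} recorded above.
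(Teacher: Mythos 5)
Your proposal is correct and follows essentially the same route as the paper: both apply \prettyref{lem:iso_sobolev} with $U:=\mathring{Q}_{r_{1}}(0)$, use the continuity of $\mathcal{I}^{-1}$ on a compact exhaustion of $U$ containing $Q_{r_{0}}(0)$, and exploit that every exhaustion set lies in $Q_{r_{1}}(0)$ to localise the $L^{q}$-norms. The only differences are cosmetic: the paper fixes the explicit exhaustion $K_{n}=Q_{r_{1}-\frac{1}{n+\nicefrac{1}{r_{1}}}}(0)$ and picks $n_{0}$ with $Q_{r_{0}}(0)\subset K_{n_{0}}$, whereas you start the exhaustion at $Q_{r_{0}}(0)$ and make the directedness of the seminorm system \eqref{lem8.2} explicit.
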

\begin{proof}
Let $U:=\mathring{Q}_{r_{1}}(0)$. Then the sets $K_{n}:=Q_{r_{1}-\frac{1}{n+\nicefrac{1}{r_{1}}}}(0)$, $n\in\N$, 
form a compact exhaustion of $U$ and there exists $n_{0}=n_{0}(r_{0},r_{1})\in\N$ 
such that $Q_{r_{0}}(0)\subset K_{n_{0}}$.
Since $\mathcal{I}^{-1}\colon \mathcal{F}(U)\rightarrow\mathcal{C}^{\infty}(U)$ is continuous 
by \prettyref{lem:iso_sobolev}, there are $N\in\N$, $l\in\N_{0}$ and $C>0$ such that
\begin{align*}
\|\partial^{\alpha}f\|_{Q_{r_{0}}(0)}&\leq |f|_{K_{n_{0}},m} 
=|\mathcal{I}^{-1}([f])|_{K_{n_{0}},m}
\leq C\bigl(\|[f]\|_{L^{q}(K_{N})}+s_{K_{N},l}([f])\bigr)\\
&\leq C\bigl(\|f\|_{L^{q}(Q_{r_{1}}(0))}+\sup_{\beta\in\N^{d}_{0},|\beta|\leq l}
\|\partial^{\beta}P(\partial)f\|_{L^{q}(Q_{r_{1}}(0))}\bigr)
\end{align*}
for all $\alpha\in\N^{d}_{0}$, $|\alpha|\leq m$, and $f\in\mathcal{C}^{\infty}(\mathring{Q}_{r_{2}}(0))$.
\end{proof}

Due to this corollary we can switch to types of $L^{q}$-seminorms which induce the same topology on $\mathcal{EV}(\Omega)$ 
as the $\sup$-seminorms and we get an useful inequality to control the growth of the solutions of the weighted 
$\overline{\partial}$-problem by the right-hand side under the following conditions.

\begin{condPN}\label{cond:weights}
Let $\mathcal{V}:=(\nu_{n})_{n\in\N}$ be a directed family of continuous weights on an open set $\Omega\subset\R^{2}$ 
and $(\Omega_{n})_{n\in\N}$ a family of non-empty open sets such that $\Omega_{n}\subset\Omega_{n+1}$
and $\Omega=\bigcup_{n\in\N} \Omega_{n}$.
For every $k\in\N$ let there be $\rho_{k}\in \R$ such that $0<\rho_{k}<\d^{\|\cdot\|_{\infty}}(\{x\},\partial\Omega_{k+1})$
for all $x\in\Omega_{k}$ and let there be $q\in\N$ such that for any $n\in\N$ there are
$\psi_{n}\in L^{q}(\Omega_{k})$, $\psi_{n}>0$, and $\N\ni J_{i}(n)\geq n$ and $C_{i}(n)>0$ for $i=1,2$ 
such that for any $x\in\Omega_{k}$:
\begin{itemize}
  \item [$(PN.1)\phantom{^{q}}$] $\sup_{\zeta\in\R^{2},\,\|\zeta\|_{\infty}\leq \rho_{k}}\nu_{n}(x+\zeta)
  \leq C_{1}(n)\inf_{\zeta\in\R^{2},\,\|\zeta\|_{\infty}\leq \rho_{k}}\nu_{J_{1}(n)}(x+\zeta)$
  \item [$(PN.2)^{q}$] $\nu_{n}(x)\leq C_{2}(n)\psi_{n}(x)\nu_{J_{2}(n)}(x)$
\end{itemize}
\end{condPN}

If $q=1$, then these conditions are a special case of \cite[Condition 2.1, p.\ 176]{kruse2018_4} 
by \cite[Remark 2.3 (b), p.\ 177]{kruse2018_4} and modifications of the conditions $(1.1)$-$(1.3)$ 
in \cite[p.\ 204]{L4}. 
They guarantee that the \textbf{p}rojective limit $\mathcal{EV}(\Omega)$ is a \textbf{n}uclear Fr\'echet space by \cite[Theorem 3.1, p.\ 188]{kruse2018_4} 
and \cite[Remark 2.7, p.\ 178-179]{kruse2018_4} if $q=1$, which we use to derive the surjectivity of 
$\overline{\partial}^{E}$ from the one of $\overline{\partial}$ for Fr\'echet spaces $E$ over $\C$.

\begin{rem}\label{rem:standard_psi}
A typical candidate for $\psi_{n}$ for every $n\in\N$ is $\psi_{n}(x):= (1+|x|^{2})^{-d}$, $x\in\R^{d}$. If
Condition $(PN)$ is fulfilled with this choice for $\psi_{n}$ for some $q\in\N$, then it is fulfilled 
for every $q\in\N$ because $\psi_{n}\in L^{q}(\R^{d})$ for every $q\in\N$.
\end{rem}

\begin{conv}[{\cite[1.1 Convention, p.\ 205]{L4}}]\label{conv:index}
We often delete the number $n$ counting the seminorms (e.g.\ $J_{i}=J_{i}(n)$ or $C_{i}=C_{i}(n)$) and 
indicate compositions with the functions $J_{i}$ only in the index 
(e.g.\ $J_{23}=J_{2}(J_{3}(n))$).
\end{conv}

\begin{defn}
Let $\Omega\subset\R^{d}$ be open and $(\Omega_{n})_{n\in\N}$ a family of non-empty
open sets such that $\Omega_{n}\subset\Omega_{n+1}$ and $\Omega=\bigcup_{n\in\N} \Omega_{n}$.
Let $\mathcal{V}:=(\nu_{n})_{n\in\N}$ be a (directed) family of continuous weights on $\Omega$.
For $n,q\in\N$ we define the locally convex Hausdorff spaces
\[
\mathcal{C}^{\infty}_{q}\nu_{n}(\Omega_{n}):=\{f\in\mathcal{C}^{\infty}(\Omega_{n})\;|\; 
\forall\; m\in\N_{0}:\;\|f\|_{n,m,q}<\infty\}
\]
and 
\[
\mathcal{C}^{\infty}_{q}\mathcal{V}(\Omega)
:=\{ f \in \mathcal{C}^{\infty}(\Omega)\; | \;\forall n \in \N:\; 
f_{\mid\Omega_{n}}\in \mathcal{C}^{\infty}_{q}\nu_{n}(\Omega_{n})\}
\]
where
\[
\|f\|_{n,m,q}:=\sup_{\alpha\in\N^{d}_{0},|\alpha|\leq m}
\bigl(\int_{\Omega_{n}}{|\partial^{\alpha}f(x)|^{q}\nu_{n}(x)^{q}\d x}\bigr)^{\frac{1}{q}}.
\]
\end{defn}

\begin{lem}\label{lem:switch_top}
Let $(PN)$ be fulfilled for some $q\in\N$.
\begin{enumerate}
\item [a)] Let $P(\partial)$ be a hypoelliptic partial differential operator, $n\in\N$
and $f\in\mathcal{C}^{\infty}(\Omega_{2J_{11}})$ such that $\|f\|_{2J_{11},0,q}<\infty$ and 
$P(\partial)f\in \mathcal{C}^{\infty}_{q}\nu_{2J_{11}}(\Omega_{2J_{11}})$. 
Then $f\in\mathcal{E}\nu_{n}(\Omega_{n})$ and 
\[
\forall\;m\in\N_{0}\;\exists\; l\in\N_{0},\, C_{0}>0:\;
|f|_{n,m}\leq C_{0}(\|f\|_{2J_{11},0,q}+\|P(\partial)f\|_{2J_{11},l,q}).
\]
\item [b)] Then $\mathcal{C}^{\infty}_{q}\mathcal{V}(\Omega)=\mathcal{EV}(\Omega)$ as locally convex spaces.
\end{enumerate}
\end{lem}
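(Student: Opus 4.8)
The plan is to treat part a) as the technical core---a localized, weighted version of the Sobolev-type estimate from \prettyref{cor:iso_sobolev}---and then to obtain part b) almost formally, by applying a) with the (trivially hypoelliptic) operator $P(\partial)=\id$ in one direction and condition $(\omega.2)^{q}$ in the other. Here $Q_{r}(x_{0})$ denotes the cube $x_{0}+Q_{r}(0)$ centred at $x_{0}$, and I use the convention $J_{11}=J_{1}(J_{1}(n))$.

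For part a), fix $n\in\N$, $m\in\N_{0}$ and an arbitrary centre $x_{0}\in\Omega_{n}$. Since $\rho_{n}<\d^{\|\cdot\|_{\infty}}(\{x_{0}\},\partial\Omega_{n+1})$ and $x_{0}\in\Omega_{n+1}$, the closed cube $Q_{\rho_{n}}(x_{0})$ lies in $\Omega_{n+1}$; because $2J_{11}\geq 2n\geq n+1$ we have $\Omega_{n+1}\subset\Omega_{2J_{11}}$, so $f$ is smooth on a neighbourhood of $Q_{\rho_{n}}(x_{0})$. I fix radii $0<r_{0}<r_{1}<r_{2}\leq\rho_{n}$ once and for all (depending only on $n$) and apply \prettyref{cor:iso_sobolev}, translated by $x_{0}$ (legitimate, as $\partial^{\alpha}$ and $P(\partial)$ commute with translations), to $y\mapsto f(x_{0}+y)$. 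Using $x_{0}\in Q_{r_{0}}(x_{0})$ this yields, for $|\beta|\leq m$, some $l\in\N_{0}$ and $C>0$ independent of $x_{0}$ with
\[
|\partial^{\beta}f(x_{0})|\leq C\Bigl(\|f\|_{L^{q}(Q_{r_{1}}(x_{0}))}+\sup_{|\gamma|\leq l}\|\partial^{\gamma}P(\partial)f\|_{L^{q}(Q_{r_{1}}(x_{0}))}\Bigr).
\]
Now I multiply by $\nu_{n}(x_{0})$ and push this weight inside the integrals: for $y\in Q_{r_{1}}(x_{0})\subset Q_{\rho_{n}}(x_{0})$ condition $(\omega.1)$ with $k=n$ gives $\nu_{n}(x_{0})\leq C_{1}\nu_{J_{1}}(y)$, while monotonicity of $\mathcal{V}$ together with $J_{1}\leq 2J_{11}$ gives $\nu_{J_{1}}\leq\nu_{2J_{11}}$. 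Since $Q_{r_{1}}(x_{0})\subset\Omega_{n+1}\subset\Omega_{2J_{11}}$, the two weighted integrals over $Q_{r_{1}}(x_{0})$ are dominated by those over $\Omega_{2J_{11}}$ with weight $\nu_{2J_{11}}$, i.e.
\[
\nu_{n}(x_{0})\,\|f\|_{L^{q}(Q_{r_{1}}(x_{0}))}\leq C_{1}\|f\|_{2J_{11},0,q},\qquad \nu_{n}(x_{0})\,\|\partial^{\gamma}P(\partial)f\|_{L^{q}(Q_{r_{1}}(x_{0}))}\leq C_{1}\|P(\partial)f\|_{2J_{11},l,q}.
\]
Combining the displays and taking the supremum over $x_{0}\in\Omega_{n}$ and $|\beta|\leq m$ gives $|f|_{n,m}\leq C_{0}(\|f\|_{2J_{11},0,q}+\|P(\partial)f\|_{2J_{11},l,q})$ with $C_{0}:=CC_{1}$; in particular $|f|_{n,m}<\infty$ for every $m$, so $f\in\mathcal{E}\nu_{n}(\Omega_{n})$.

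For part b) I would establish both inclusions as continuous. The inclusion $\mathcal{EV}(\Omega)\hookrightarrow\mathcal{C}^{\infty}_{q}\mathcal{V}(\Omega)$ follows from $(\omega.2)^{q}$: for $f\in\mathcal{EV}(\Omega)$ and $|\alpha|\leq m$, writing $\nu_{n}\leq C_{2}\psi_{n}\nu_{J_{2}}$ and bounding $|\partial^{\alpha}f(x)|\nu_{J_{2}}(x)\leq|f|_{J_{2},m}$ on $\Omega_{n}\subset\Omega_{J_{2}}$ yields $\|f\|_{n,m,q}\leq C_{2}\|\psi_{n}\|_{L^{q}(\Omega_{n})}\,|f|_{J_{2},m}$, which is finite by the $q$-integrability of $\psi_{n}$ and gives continuity. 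Conversely, applying part a) with $P(\partial)=\id$, so $P(\partial)f=f$, to any $f\in\mathcal{C}^{\infty}_{q}\mathcal{V}(\Omega)$ produces, for each $n,m$, some $l$ and $C_{0}$ with $|f|_{n,m}\leq 2C_{0}\|f\|_{2J_{11},l,q}$; hence $f\in\mathcal{EV}(\Omega)$ and $\mathcal{C}^{\infty}_{q}\mathcal{V}(\Omega)\hookrightarrow\mathcal{EV}(\Omega)$ is continuous. The two continuous inclusions give equality of the underlying sets and of the topologies.

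The main obstacle I anticipate is arranging part a) so that all constants are genuinely uniform in the centre $x_{0}$: one must fix the three radii beforehand (forcing $r_{2}\leq\rho_{n}$ so that the large cube stays inside $\Omega_{n+1}$ for every $x_{0}\in\Omega_{n}$ at once), and one must verify that the index $2J_{11}$ is large enough both to contain the enlarged domain ($\Omega_{n+1}\subset\Omega_{2J_{11}}$, via $2J_{11}\geq 2n$) and to absorb the weight jump $\nu_{n}\to\nu_{J_{1}}$ created by $(\omega.1)$ (via $J_{1}\leq 2J_{11}$). Everything else reduces to bookkeeping with the monotonicity of $\mathcal{V}$ and the two conditions $(\omega.1)$ and $(\omega.2)^{q}$.
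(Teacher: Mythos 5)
Your proposal is correct and takes essentially the same route as the paper: part a) rests on \prettyref{cor:iso_sobolev} applied to translates of $f$ on cubes contained in $\Omega_{n+1}\subset\Omega_{2J_{11}}$, with $(\omega.1)$ absorbing the weight $\nu_{n}$ into $\nu_{2J_{11}}$, and part b) combines a) with condition $(\omega.2)^{q}$ for the reverse inclusion. The only deviations are cosmetic: the paper covers $\Omega_{n}$ by a countable family of cubes (citing a covering lemma) and applies $(\omega.1)$ twice, reaching $\nu_{J_{11}}$, where you center a cube at each point $x_{0}$ and apply $(\omega.1)$ once; and in b) the paper chooses the Laplacian $P(\partial)=\Delta$ instead of your $P(\partial)=\operatorname{id}$, both of which are hypoelliptic, so both legitimate.
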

\begin{proof}
$a)$ Due to \cite[Lemma 2.11 (p.1), p.\ 183-184]{kruse2018_4}, \cite[Remark 2.7, p.\ 178-179]{kruse2018_4} 
and \cite[Remark 2.3 (b), p.\ 177]{kruse2018_4} there are $\mathcal{K}\subset\N$ and a sequence 
$(z_{k})_{k\in\mathcal{K}}$, $z_{k}\neq z_{j}$ for $k\neq j$, in $\Omega_{n}$ such that the balls
 \[
  b_{k}:=\{\zeta\in\R^{d}\;|\;\|\zeta-z_{k}\|_{\infty}<\rho_{n}/2\}
 \]
form an open covering of $\Omega_{n}$ with
\[
 \Omega_{n}\subset \bigcup_{k\in\mathcal{K}}b_{k}\subset\bigcup_{k\in\mathcal{K}}B_{k}
 \subset \Omega_{n+1}\subset \Omega_{2J_{11}(n)}
\]
where 
\[
 B_{k}:=\{\zeta\in\R^{d}\;|\;\|\zeta-z_{k}\|_{\infty}<\rho_{n}\}.
\]
Let $m\in\N_{0}$, $\alpha\in\N^{d}_{0}$, $|\alpha|\leq m$, and $k\in\mathcal{K}$. By \prettyref{cor:iso_sobolev} there exist 
$l\in\N_{0}$ and $C>0$, $C$ and $l$ independent of $k$ and $\alpha$, such that
\begin{flalign}\label{lem10.1}
&\hspace{0.37cm} \|(\partial^{\alpha}f)\nu_{n}\|_{b_{k}}\nonumber\\
&\leq \|\nu_{n}\|_{b_{k}}\|\partial^{\alpha}f\|_{b_{k}}
\underset{(\omega.1)}{\leq}C_{1}\nu_{J_{1}}(z_{k})\|\partial^{\alpha}f\|_{b_{k}}\nonumber\\
&= C_{1}\nu_{J_{1}}(z_{k})\|\partial^{\alpha}f(z_{k}+\cdot)\|_{Q_{\rho_{n}/2}(0)}\nonumber\\
&\underset{\mathclap{\ref{cor:iso_sobolev}}}{\leq} CC_{1}\nu_{J_{1}}(z_{k})
 \bigl(\|f\|_{L^{q}(B_{k})} 
 +\sup_{\beta\in\N^{2}_{0},|\beta|\leq l}
 \|\partial^{\beta}P(\partial)f\|_{L^{q}(B_{k})}\bigr)\nonumber\\
&\underset{\mathclap{(PN.1)}}{\leq} CC_{1}C_{1}(J_{1})
 \bigl(\|f\nu_{J_{11}}\|_{L^{q}(B_{k})} 
 +\sup_{\beta\in\N^{2}_{0},|\beta|\leq l}
 \|(\partial^{\beta}P(\partial)f)\nu_{J_{11}}\|_{L^{q}(B_{k})}\bigr)\nonumber\\
&\leq CC_{1}C_{1}(J_{1})\bigl(\|f\|_{2J_{11},0,q}+\|P(\partial)f\|_{2J_{11},l,q}\bigr)
\end{flalign}
and so we get
\begin{align*}
|f|_{n,m}&\leq \sup_{\substack{x\in\bigcup_{k\in \mathcal{K}}b_{k}\\ \alpha\in\N^{d}_{0},\,|\alpha|\leq m}}
{|\partial^{\alpha}f(x)|\nu_{n}(x)}
\leq\quad\sup_{\mathclap{\substack{k\in \mathcal{K}\\ \alpha\in\N^{d}_{0},\,|\alpha|\leq m}}}
{\|(\partial^{\alpha}f)\nu_{n}\|_{b_{k}}}\\
&\underset{\mathclap{\eqref{lem10.1}}}{\leq}
CC_{1}C_{1}(J_{1})\bigl(\|f\|_{2J_{11},0,q}+\|P(\partial)f\|_{2J_{11},l,q}\bigr).
\end{align*}

$b)$ Let $f\in\mathcal{C}^{\infty}_{q}\mathcal{V}(\Omega)$ and $P(\partial):=\Delta$ be the Laplacian. 
Then $f$ satisfies the conditions of $a)$ for all $n\in\N$ because
\[
\|\Delta f\|_{n,m,q}=\sup_{\alpha\in\N^{d}_{0},|\alpha|\leq m}
\|\sum_{i=1}^{d}(\partial^{\alpha+2e_{i}}f)\nu_{n}\|_{L^{q}(\Omega_{n})}
\leq d \|f\|_{n,m+2,q}<\infty
\]
for every $m\in\N_{0}$. So for every $n\in\N$ and $m\in\N_{0}$ there exist $l\in\N_{0}$ and $C_{0}>0$ such that
\begin{align*}
|f|_{n,m}&\leq C_{0}\bigl(\|f\|_{2J_{11},0,q}+\|\Delta f\|_{2J_{11},l,q}\bigr)\\
&\leq C_{0}\bigl(\|f\|_{2J_{11},0,q}+d\|f\|_{2J_{11},l+2,q}\bigr)
\leq (1+d)C_{0}\|f\|_{2J_{11},l+2,q}.
\end{align*}
On the other hand, let $f\in\mathcal{EV}(\Omega)$. For every $n\in\N$ and $m\in\N_{0}$ we have
\begin{align*}
\|f\|_{n,m,q}^{q}&=\sup_{\alpha\in\N^{d}_{0},|\alpha|\leq m}
\int_{\Omega_{n}}{|\partial^{\alpha}f(x)|^{q}\nu_{n}(x)^{q}\d x}\\
&\underset{\mathclap{(PN.2)^{q}}}{\leq}\quad C_{2}^{q}\sup_{\alpha\in\N^{d}_{0},|\alpha|\leq m}
\int_{\Omega_{n}}{|\partial^{\alpha}f(x)|^{q}\psi_{n}(x)^{q}\nu_{J_{2}}(x)^{q}\d x}\\
&\leq C_{2}^{q}\int_{\Omega_{n}}{\psi_{n}(x)^{q}\d x}\sup_{\substack{w\in \Omega_{n}\\ \alpha\in\N^{d}_{0},\,|\alpha|\leq m}}
|\partial^{\alpha}f(w)|^{q}\nu_{J_{2}}(w)^{q}\\
&\leq C_{2}^{q}\|\psi\|^{q}_{L^{q}(\Omega_{n})}\left|f\right|_{J_{2},m}^{q}.
\end{align*}
\end{proof}

The following examples from \cite[Example 2.8, p.\ 179]{kruse2018_4} and \cite[Example 2.9, p.\ 182]{kruse2018_4} 
fulfil $(PN)$ for every $q\in\N$ (see \prettyref{rem:standard_psi}).

\begin{exa}\label{ex:families_of_weights_1}
Let $\Omega\subset\R^{d}$ be open and $(\Omega_{n})_{n\in\N}$ a family of non-empty open sets such that
\begin{enumerate}
 \item [(i)] $\Omega_{n}:=\R^{d}$ for every $n\in\N$.
 \item [(ii)] $\Omega_{n}\subset\Omega_{n+1}$ and $\d^{\|\cdot\|}(\Omega_{n},\partial\Omega_{n+1})>0$ for every $n\in\N$.
 \item [(iii)] $\Omega_{n}:=\{x=(x_{i})\in \Omega\;|\;\forall\;i\in I:\;|x_{i}|<n+N\;\text{and}\;
 \d^{\|\cdot\|}(\{x\},\partial \Omega)>1/(n+N) \}$ where $I\subset\{1,\ldots,d\}$, $\partial \Omega\neq\varnothing$ and $N\in\N_{0}$ 
 is big enough.
 \item [(iv)] $\Omega_{n}:=\{x=(x_{i})\in \Omega\;|\;\forall\;i\in I:\;|x_{i}|<n\}$ where $I\subset\{1,\ldots,d\}$ and $\Omega:=\R^{d}$.
 \item [(v)] $\Omega_{n}:=\mathring K_{n}$ where $K_{n}\subset\mathring K_{n+1}$, $\mathring{K}_{n}\neq\varnothing$, 
 is a compact exhaustion of $\Omega$.
\end{enumerate}
Let $(a_{n})_{n\in\N}$ be strictly increasing such that $a_{n}\geq 0$ for all $n\in\N$ or 
$a_{n}\leq 0$ for all $n\in\N$. The family $\mathcal{V}:=(\nu_{n})_{n\in\N}$ of positive continuous functions on $\Omega$ given by 
\[
 \nu_{n}\colon\Omega\to (0,\infty),\;\nu_{n}(x):=e^{a_{n}\mu(x)},
\]
with some function $\mu\colon\Omega\to[0,\infty)$ fulfils $\nu_{n}\leq\nu_{n+1}$ for all $n\in\N$ and
$(PN)$ for every $q\in\N$ with $\psi_{n}(x):= (1+|x|^{2})^{-d}$, $x\in\R^{d}$, for every $n\in\N$ if
\begin{enumerate}
  \item [a)] there is some $0<\gamma\leq 1$ and such that
  $\mu(x)=|(x_{i})_{i\in I_{0}}|^{\gamma}$, $x=(x_{i})\in\Omega$, where $I_{0}:=\{1,\ldots,d\}\setminus I$ with $I\subsetneq\{1,\ldots,d\}$ and 
  $(\Omega_{n})_{n\in\N}$ from (iii) or (iv).
  \item [b)] $\lim_{n\to\infty}a_{n}=\infty$ or $\lim_{n\to\infty}a_{n}=0$ and there is some 
  $m\in\N$, $m\leq 2d+1$, such that $\mu(x)=|x|^{m}$, $x\in\Omega$, with $(\Omega_{n})_{n\in\N}$ from (i) or (ii).
  \item [c)] $a_{n}=n/2$ for all $n\in\N$ and $\mu(x)=\ln(1+|x|^{2})$, $x\in\R^{d}$, with $(\Omega_{n})_{n\in\N}$ from (i).
  \item [d)] $\mu(x)=0$, $x\in\Omega$, with $(\Omega_{n})_{n\in\N}$ from (v).
\end{enumerate}
\end{exa}

\prettyref{ex:families_of_weights_1} a) covers the weights $\nu_{n}(z):=\exp{(-|\re(z)|/n)}$, $z\in \C\setminus\R$, 
with the sets $\Omega_{n}:=\{z\in\C\;|\; 1/n<|\im(z)|<n\}$ and \prettyref{lem:switch_top} is a 
generalisation of \cite[5.15 Lemma, p.\ 78]{ich}. 
In \prettyref{ex:families_of_weights_1} c) we have $\mathcal{EV}(\R^{d})=\mathcal{S}(\R^{d})$, the Schwartz space. 
Hence the $\sup$- and $L^{q}$-seminorms form an equivalent system of seminorms on $\mathcal{S}(\R^{d})$ for every $q\in\N$ 
by \prettyref{lem:switch_top}. This generalises the observation for $q=2$ and $d=1$ in \cite[Example 29.5 (2), p.\ 361]{meisevogt1997} 
(cf.\ \cite[Folgerung, p.\ 85]{Wloka1967}).
In \prettyref{ex:families_of_weights_1} d) $\mathcal{EV}(\Omega)=\mathcal{C}^{\infty}(\Omega)$ with the usual topology 
of uniform convergence of partial derivatives of any order on compact subsets of $\Omega$.
We remark that we can choose $C_{2}(n):=(1+\sup\{|x|^2\;|\;x\in\Omega_{n}\})^{d}$ in d). 
Other choices for $\psi_{n}$ are also possible, for example, $\psi_{n}:=1$ in d) 
but we are interested in this particular choice of $\psi_{n}$ in view of the next section.
\section{Main result}
In this section we always assume that $\K=\C$ and $d=2$ and state our main theorem on the surjectivity of the Cauchy-Riemann operator
\[
\overline{\partial}^{E}:=\frac{1}{2}((\partial_{1})^{E}+i(\partial_{2})^{E})
\colon \mathcal{EV}(\Omega,E)\to \mathcal{EV}(\Omega,E)
\]
for any Fr\'echet space $E$. Let us outline the proof. 
If the subspace of the projective spectrum $\mathcal{EV}(\Omega)$ consisting of holomorphic functions 
has some kind of density property weaker than weak reducibility 
and for every $f\in\mathcal{EV}(\Omega)$ and $n\in\N$ there is $u_{n}\in\mathcal{E}\nu_{n}(\Omega_{n})$ such that 
$\overline{\partial} u_{n}=f$ on $\Omega_{n}$,
then the Mittag-Leffler procedure yields the surjectivity of $\overline{\partial}^{E}$ for $E=\C$. 
Since the space $\mathcal{EV}(\Omega)$ is a nuclear Fr\'echet space under condition $(PN)$ and 
$\mathcal{EV}(\Omega,E)\cong\mathcal{EV}(\Omega)\widehat{\otimes}_{\pi}E$, 
the surjectivity of $\overline{\partial}$ implies the surjectivity of $\overline{\partial}^{E}$ 
for any Fr\'echet space $E$ by the classical theory of tensor products of Fr\'echet spaces.

\begin{defn}
Let $E$ be a locally convex Hausdorff space over $\C$, 
$\mathcal{V}:=(\nu_{n})_{n\in\N}$ a (directed) family of continuous weights on an open set
$\Omega\subset\R^{2}$ and $(\Omega_{n})_{n\in\N}$ a family of non-empty
open sets such that $\Omega_{n}\subset\Omega_{n+1}$ for all $n\in\N$ and $\Omega=\bigcup_{n\in\N} \Omega_{n}$.
For $n\in\N$ we define
\[
\mathcal{E}\nu_{n,\overline{\partial}}(\Omega_{n},E):= \{ f \in \mathcal{E}\nu_{n}(\Omega_{n}, E)\; | \;
f\in\operatorname{ker}\overline{\partial}^{E} \}
\]
and 
\[
\mathcal{EV}_{\overline{\partial}}(\Omega, E):=\{f\in\mathcal{EV}(\Omega, E)\;|\;
f\in\operatorname{ker}\overline{\partial}^{E}\}.
\]
$\mathcal{EV}_{\overline{\partial}}(\Omega)$ is called \emph{very weakly reduced} if for every $n\in\N$ there 
are $\iota_{1}(n),\iota_{2}(n)\in\N$, $\iota_{2}(n)\geq \iota_{1}(n)$, such that the restricted space $\pi_{\iota_{2}(n),n}(\mathcal{E}\nu_{\iota_{2}(n),\overline{\partial}}(\Omega_{\iota_{2}(n)}))$ is dense in $\pi_{\iota_{1}(n),n}(\mathcal{E}\nu_{\iota_{1}(n),\overline{\partial}}(\Omega_{\iota_{1}(n)}))$ w.r.t.\ $(|\cdot|_{n,m})_{m\in\N_{0}}$.
\end{defn}

We note that $\mathcal{EV}_{\overline{\partial}}(\Omega)$ is very weakly reduced if it is weakly reduced. 
By now all ingredients that are required to prove the surjectivity of $\overline{\partial}$ are provided.

\begin{thm}\label{thm:scalar_CR_surjective}
Let $(PN)$ with $\psi_{n}(z):=(1+|z|^{2})^{-2}$, $z\in\Omega$, be fulfilled for some (thus all) $q\in\N$, 
$\mathcal{EV}_{\overline{\partial}}(\Omega)$ be very weakly reduced with $\iota_{2}(n)\geq \iota_{1}(n+1)$
and $-\ln\nu_{n}$ be subharmonic on $\Omega$ for every $n\in\N$. Then
\[
\overline{\partial}\colon \mathcal{EV}(\Omega)\to\mathcal{EV}(\Omega)
\]
is surjective.
\end{thm}
\begin{proof}
$(i)$ Let $f\in\mathcal{EV}(\Omega)$, $n\in\N$ and set
	\[
	\varphi_{n}\colon\Omega\to\R,\;\varphi_{n}\left(z\right):=-2\ln\nu_{J_{2}(2J_{11}(n))}(z),
	\]
which is a (pluri)subharmonic function on $\Omega$. The set $\Omega_{J_{2}(2J_{11})}$ is open and 
pseudoconvex since every open set in $\C$ is a domain of holomorphy by 
\cite[Corollary 1.5.3, p.\ 15]{H3} and hence pseudoconvex by \cite[Theorem 4.2.8, p.\ 88]{H3}. 
For the differential form $g:=f\operatorname{d\bar{z}}$ we have $\overline{\partial}g=0$ 
in the sense of differential forms and $f\in\mathcal{EV}(\Omega)=\mathcal{C}^{\infty}_{2}\mathcal{V}(\Omega)$ 
by \prettyref{lem:switch_top} b) resulting in
\[
\int_{\Omega_{J_{2}(2J_{11})}}{|f(z)|^{2}e^{-\varphi_{n}(z)} \d z}= \|f\|_{J_{2}(2J_{11}),0,2}^{2}<\infty.
\]
Thus by \cite[Theorem 4.4.2, p.\ 94]{H3} there is a solution $u_{n}\in L^{2}_{loc}(\Omega_{J_{2}(2J_{11})})$ of
$\overline{\partial}u_{n}=f_{\mid\Omega_{J_{2}(2J_{11})}}$ in the distributional sense 
such that
\[
\int_{\Omega_{J_{2}(2J_{11})}}{|u_{n}(z)|^{2}e^{-\varphi_{n}(z)}(1+|z|^{2})^{-2} \d z}
\leq\int_{\Omega_{J_{2}(2J_{11})}}{|f(z)|^{2}e^{-\varphi_{n}(z)} \d z}.
\]
Since $\overline{\partial}$ is hypoelliptic, it follows that $u_{n}\in\mathcal{C}^{\infty}(\Omega_{J_{2}(2J_{11})})$, 
resp.\ $u_{n}$ has a representative which is $\mathcal{C}^{\infty}$. 
By virtue of property $(PN.2)^{2}$ we gain
\begin{align*}
 \|u_{n}\|_{2J_{11},0,2}^{2}
&=\int_{\Omega_{2J_{11}}}{|u_{n}(z)|^{2}\nu_{2J_{11}}(z)^{2} \d z}\\
&\underset{\mathclap{(PN.2)^{2}}}{\leq} \quad C_{2}(2J_{11})^{2}\int_{\Omega_{J_{2}(2J_{11})}}
 {|u_{n}(z)|^{2}e^{-\varphi_{n}(z)}(1+|z|^{2})^{-4} \d z}\\
&\leq C_{2}(2J_{11})^{2}\int_{\Omega_{J_{2}(2J_{11})}}{|u_{n}(z)|^{2}e^{-\varphi_{n}(z)}(1+|z|^{2})^{-2} \d z} <\infty .
\end{align*}
So the conditions of \prettyref{lem:switch_top} a) are fulfilled for all $n\in\N$, 
implying $u_{n}\in\mathcal{E}\nu_{n}(\Omega_{n})$.

$(ii)$ The next step is to prove the surjectivity of $\overline{\partial}\colon \mathcal{EV}(\Omega)\to\mathcal{EV}(\Omega)$ 
via the Mittag-Leffler procedure (see \cite[9.14 Theorem, p.\ 206-207]{Kaballo}). 
Due to $(i)$ we have for every $l\in\N$ a function $u_{l}\in\mathcal{E}\nu_{l}(\Omega_{l})$ 
such that $\overline{\partial}u_{l}=f_{\mid\Omega_{l}}$.
Now, we inductively construct $g_{n}\in\mathcal{E}\nu_{\iota_{1}(n)}(\Omega_{\iota_{1}(n)})$, $n\in\N$, such that
\begin{enumerate}
	\item [(1)]
	$\overline{\partial}g_{n}=f_{\mid\Omega_{\iota_{1}(n)}}$, $n\geq 1$,
	\item [(2)]
	$|g_{n}-g_{n-1}|_{n-1,n-1}\leq \frac{1}{2^{n}}$, $n\geq 2$,
\end{enumerate}
with $\iota_{1}(n)$ from the definition of very weak reducibility.
For $n=1$ set $g_{1}:=u_{\iota_{1}(1)}$. Then we have $g_{1}\in\mathcal{E}\nu_{\iota_{1}(1)}(\Omega_{\iota_{1}(1)})$ 
and $\overline{\partial}g_{1}=f_{\mid\Omega_{\iota_{1}(1)}}$ by part $(i)$. 
Let $g_{n}$ fulfil (1) for some $n\geq 1$. Since
\[
 \overline{\partial}(u_{\iota_{2}(n)}-g_{n})_{\mid\Omega_{\iota_{1}(n)}}
=\overline{\partial}{u_{\iota_{2}(n)}}_{\mid\Omega_{\iota_{1}(n)}}-{\overline{\partial}g_{n}}_{\mid\Omega_{\iota_{1}(n)}}
 \underset{(i),\,(1)}{=}f_{\mid\Omega_{\iota_{1}(n)}}-f_{\mid\Omega_{\iota_{1}(n)}}=0,
\]
it follows $u_{\iota_{2}(n)}-g_{n}\in\mathcal{E}\nu_{\iota_{1}(n),\overline{\partial}}(\Omega_{\iota_{1}(n)})$ 
and by the very weak reducibility of $\mathcal{EV}_{\overline{\partial}}(\Omega)$ there is 
$h_{n+1}\in\mathcal{E}\nu_{\iota_{2}(n),\overline{\partial}}(\Omega_{\iota_{2}(n)})$ such that
\[
 |u_{\iota_{2}(n)}-g_{n}-h_{n+1}|_{n,n}\leq \frac{1}{2^{n+1}}.
\]
Set $g_{n+1}:=u_{\iota_{2}(n)}-h_{n+1}\in\mathcal{E}\nu_{\iota_{2}(n)}(\Omega_{\iota_{2}(n)})$. 
As $\iota_{2}(n)\geq \iota_{1}(n+1)$, we have $g_{n+1}\in\mathcal{E}\nu_{\iota_{1}(n+1)}(\Omega_{\iota_{1}(n+1)})$.
Condition (2) is satisfied by the inequality above and condition (1) as well because
\[
 \overline{\partial}g_{n+1}
=\overline{\partial}u_{\iota_{2}(n)}-\overline{\partial}h_{n+1}
=\overline{\partial}u_{\iota_{2}(n)}-0
\underset{(i)}{=}f_{\mid\Omega_{\iota_{1}(n+1)}}.
\]
Now, let $\varepsilon>0$, $l\in\N$ and $m\in\N_{0}$. Choose $l_{0}\in\N$, $l_{0}\geq \max\left(l,m\right)$, 
such that $\frac{1}{2^{l_{0}}}<\varepsilon$. For all $p\geq k\geq l_{0}$ we get
\begin{align*}
 |g_{p}-g_{k}|_{l,m}
&\leq|g_{p}-g_{k}|_{l_{0},l_{0}}
 =\bigl|\sum^{p}_{j=k+1}{g_{j}-g_{j-1}}\bigr|_{l_{0},l_{0}}
 \leq\sum^{p}_{j=k+1}{|g_{j}-g_{j-1}|_{l_{0},l_{0}}}\\
&\underset{\mathclap{l_{0}\leq k\leq j-1}}{\leq}\quad\; \sum^{p}_{j=k+1}{|g_{j}-g_{j-1}|_{j-1,j-1}}
 \;\underset{\mathclap{(2)}}{\leq} \;\sum^{p}_{j=k+1}{\frac{1}{2^{j}}}<\frac{1}{2^{k}}
 \leq\frac{1}{2^{l_{0}}}<\varepsilon.
\end{align*}
Hence $(g_{n})_{n\geq \max(l-2,1)}$ is a Cauchy sequence in $\mathcal{E}\nu_{l}(\Omega_{l})$ for all $l\in\N$ and, 
since these spaces are complete by \cite[Proposition 3.7, p.\ 240]{kruse2018_2}, 
it has a limit $G_{l}\in\mathcal{E}\nu_{l}(\Omega_{l})$. These limits coincide 
on their common domain because for every $l_{1},l_{2}\in\N$, $l_{1}<l_{2}$, and $\varepsilon_{1}>0$ 
there exists $N\in\N$ such that for all $n\geq N$
\begin{align*}
 |G_{l_{1}}-G_{l_{2}}|_{l_{1},m}
&\leq |G_{l_{1}}-g_{n}|_{l_{1},m}+|g_{n}-G_{l_{2}}|_{l_{1},m}
 \leq |G_{l_{1}}-g_{n}|_{l_{1},m}+|g_{n}-G_{l_{2}}|_{l_{2},m}\\
&<\frac{\varepsilon_{1}}{2}+\frac{\varepsilon_{1}}{2}=\varepsilon_{1}.
\end{align*}
So the limit function $g$, defined by $g:=G_{l}$ on $\Omega_{l}$ for all $l\in\N$, is well-defined 
and we have $g\in\mathcal{EV}(\Omega)$.
Thus we obtain for all $l\in\N$
\[
 f_{\mid\Omega_{l}}
\underset{\substack{(1)\\n\geq \max(l-2,1)}}{=}{\overline{\partial}g_{n}}_{\mid\Omega_{l}}
\underset{n\to\infty}{\to} \overline{\partial}g_{\mid\Omega_{l}}
\]
and hence the existence of $g\in\mathcal{EV}(\Omega)$ with $\overline{\partial}g=f$ on $\Omega$ is proved.
\end{proof}

Moreover, we are already able to show that $\overline{\partial}^{E}$ is surjective for Fr\'echet spaces $E$ over $\C$
just by using classical theory of tensor products of Fr\'echet spaces.

\begin{cor}\label{cor:frechet_CR_surjective}
Let $(PN)$ with $\psi_{n}(z):=(1+|z|^{2})^{-2}$, $z\in\Omega$, be fulfilled for some (thus all) $q\in\N$, 
$\mathcal{EV}_{\overline{\partial}}(\Omega)$ be very weakly reduced with $\iota_{2}(n)\geq \iota_{1}(n+1)$
and $-\ln\nu_{n}$ be subharmonic on $\Omega$ for every $n\in\N$.
If $E$ is a Fr\'echet space over $\C$, then
\[
\overline{\partial}^{E}\colon \mathcal{EV}(\Omega,E)\to\mathcal{EV}(\Omega,E)
\]
is surjective.
\end{cor}
\begin{proof}
First, we recall some definitions and facts from the theory of tensor products 
(see \cite{Defant}, \cite{Jarchow}, \cite{Kaballo}). The $\varepsilon$-product of Schwartz 
is given by $\mathcal{EV}(\Omega)\varepsilon E:=L_{e}(\mathcal{EV}(\Omega)_{\kappa}',E)$ where 
the dual $\mathcal{EV}(\Omega)'$ is equipped with the topology of uniform convergence on 
absolutely convex, compact subsets of $\mathcal{EV}(\Omega)$ and $L(\mathcal{EV}(\Omega)_{\kappa}',E)$ 
with the topology of uniform convergence on equicontinuous subsets of $\mathcal{EV}(\Omega)'$. 
The space $\mathcal{EV}(\Omega)$ is a Fr\'echet space by \cite[Proposition 3.7 , p.\ 240]{kruse2018_2} 
and nuclear by \cite[Theorem 3.1, p.\ 188]{kruse2018_4}, \cite[Remark 2.7, p.\ 178-179]{kruse2018_4} 
and \cite[Remark 2.3 (b), p.\ 177]{kruse2018_4} because $(PN.1)$ and $(PN.2)^{1}$ are fulfilled.
Thus the map 
\[
S\colon \mathcal{EV}(\Omega)\varepsilon E\to\mathcal{EV}(\Omega,E),\; 
u\longmapsto [x\mapsto u(\delta_{x})],
\]
is a topological isomorphism by \cite[Example 16 c), p.\ 1526]{kruse2017} 
where $\delta_{x}$ is the point-evaluation at $x\in\Omega$. 
The continuous linear injection
\[
\chi\colon \mathcal{EV}(\Omega)\otimes_{\pi} E \to \mathcal{EV}(\Omega)\varepsilon E,\; 
\sum_{n=1}^{k} f_{n}\otimes e_{n} \longmapsto \bigl[y\mapsto \sum_{n=1}^{k} y(f_{n})e_{n}\bigr],
\]
from the tensor product $\mathcal{EV}(\Omega)\otimes_{\pi} E$ with the projective topology 
extends to a continuous linear map 
$\widehat{\chi}\colon \mathcal{EV}(\Omega)\widehat{\otimes}_{\pi} E \to \mathcal{EV}(\Omega)\varepsilon E$
on the completion $\mathcal{EV}(\Omega)\widehat{\otimes}_{\pi} E$ of $\mathcal{EV}(\Omega)\otimes_{\pi} E$. 
The map $\widehat{\chi}$ is also a topological isomorphism since $\mathcal{EV}(\Omega)$ is nuclear.
Furthermore, we define
\[
\overline{\partial}\varepsilon \operatorname{id}_{E}
\colon \mathcal{EV}(\Omega)\varepsilon E \to \mathcal{EV}(\Omega)\varepsilon E,\;
u\mapsto u\circ \overline{\partial}^{t},
\]
where $\overline{\partial}^{t}\colon \mathcal{EV}(\Omega)'\to\mathcal{EV}(\Omega)'$, $y\mapsto y\circ\overline{\partial}$, 
and $\overline{\partial} \otimes_{\pi}\operatorname{id}_{E}\colon 
\mathcal{EV}(\Omega)\otimes_{\pi}E\to \mathcal{EV}(\Omega)\otimes_{\pi}E$ 
is defined by the relation $\chi\circ (\overline{\partial} \otimes_{\pi}\operatorname{id}_{E})
=(\overline{\partial}\varepsilon \operatorname{id}_{E})\circ \chi $. Denoting by 
$\overline{\partial}\, \widehat{\otimes}_{\pi}\operatorname{id}_{E}$ the continuous linear extension of 
$\overline{\partial} \otimes_{\pi}\operatorname{id}_{E}$ to the completion $\mathcal{EV}(\Omega)\widehat{\otimes}_{\pi} E$, 
we observe that 
\[
 \overline{\partial}^{E}
=S\circ(\overline{\partial}\varepsilon \operatorname{id}_{E})\circ S^{-1}
=S\circ\widehat{\chi}\circ(\overline{\partial}\, \widehat{\otimes}_{\pi}\operatorname{id}_{E})
  \circ\widehat{\chi}^{-1}\circ S^{-1}.
\]
Now, we turn to the actual proof. Let $g\in\mathcal{EV}(\Omega,E)$.
The maps $\operatorname{id}_{E}\colon E\to E$ and 
$\overline{\partial}\colon\mathcal{EV}(\Omega)\to\mathcal{EV}(\Omega)$ are linear, continuous and surjective, 
the latter one by \prettyref{thm:scalar_CR_surjective}. Moreover, $E$ and $\mathcal{EV}(\Omega)$ are Fr\'echet spaces, 
so $\overline{\partial}\,\widehat{\otimes}_{\pi}\operatorname{id}_{E}$ is surjective by \cite[10.24 Satz, p.\ 255]{Kaballo}, 
i.e.\ there is $f \in\mathcal{EV}(\Omega)\widehat{\otimes}_{\pi}E$ such that 
$(\overline{\partial}\,\widehat{\otimes}_{\pi}\operatorname{id}_{E})(f)=(\widehat{\chi}^{-1}\circ S^{-1})(g)$. 
Then $(S\circ\widehat{\chi})(f)\in\mathcal{EV}(\Omega,E)$ and 
\[
 \overline{\partial}^{E}((S\circ\widehat{\chi})(f))
=(S\circ\widehat{\chi})((\overline{\partial}\, \widehat{\otimes}_{\pi}\operatorname{id}_{E})(f))
=(S\circ\widehat{\chi})((\widehat{\chi}^{-1}\circ S^{-1})(g))
=g.
\]
\end{proof}
\section{Very weak reducibility and applications of the main result}
In our last section we derive sufficient conditions for the very \textbf{w}eak \textbf{r}educibility of 
$\mathcal{EV}_{\overline{\partial}}(\Omega)$ and apply our main result to some examples.
We start with our conditions.

\begin{condWR}\label{cond:dense}
Let $\mathcal{V}:=(\nu_{n})_{n\in\N}$ be a (directed) family of continuous weights on an open set
$\Omega\subset\R^{2}$ and $(\Omega_{n})_{n\in\N}$ a family of non-empty
open sets such that $\Omega_{n}\neq \R^{2}$, $\Omega_{n}\subset\Omega_{n+1}$ for all $n\in\N$, 
$\d_{n,k}:=\d^{|\cdot|}(\Omega_{n},\partial\Omega_{k})>0$ for all $n,k\in\N$, $k>n$,
and $\Omega=\bigcup_{n\in\N} \Omega_{n}$.

(WR.1) For every $n\in\N$ let there be $g_{n}\in\mathcal{O}(\C)$ with $g_{n}(0)=1$ and $\N\ni I_{j}(n)> n$ for $j=1,2$ 
such that
\begin{enumerate}
\item [(a)] for every $\varepsilon>0$ there is a compact set $K\subset \overline{\Omega}_{n}$ with 
$\nu_{n}(x)\leq\varepsilon\nu_{I_{1}(n)}(x)$ for all $x\in\Omega_{n}\setminus K$.
\item [(b)] there is an open set $X_{I_{2}(n)}\subset\R^{2}\setminus \overline{\Omega}_{I_{2}(n)}$ such that
there are $R_{n},r_{n}\in\R$ with $0<2R_{n}<\d^{|\cdot|}(X_{I_{2}(n)},\Omega_{I_{2}(n)}):=\d_{X,I_{2}(n)}$ 
and $R_{n}<r_{n}<\d_{X,I_{2}(n)}-R_{n}$ as well as 
$A_{2}(\cdot,n)\colon X_{I_{2}(n)}+\mathbb{B}_{R_{n}}(0)\to (0,\infty)$, 
$A_{2}(\cdot,n)_{\mid X_{I_{2}(n)}}$ locally bounded, satisfying
\begin{equation}\label{pro.2}
\max\{|g_{n}(\zeta)|\nu_{I_{2}(n)}(z)\;|\;\zeta\in\R^{2},\,|\zeta-(z-x)|=r_{n}\}\leq A_{2}(x,n) 
\end{equation}
for all $z\in\Omega_{I_{2}(n)}$ and $x\in X_{I_{2}(n)}+\mathbb{B}_{R_{n}}(0)$.
\item [(c)] for every compact set $K\subset \R^{2}$ there is $A_{3}(n,K)>0$  with
\[
\int_{K}{\frac{|g_{n}(x-y)|\nu_{n}(x)}{|x-y|}\d y}\leq A_{3}(n,K),\quad x\in \Omega_{n}.
\]
\end{enumerate}

(WR.2) Let (WR.1a) be fulfilled. For every $n\in\N$ let there be $\N\ni I_{4}(n)>n$ and $A_{4}(n)>0$ such that
\begin{equation}\label{pro.4}
\int_{\Omega_{I_{4}(n)}}{\frac{|g_{I_{14}(n)}(x-y)|\nu_{p}(x)}{|x-y|\nu_{k}(y)}\d y}\leq A_{4}(n), \quad x\in \Omega_{p},
\end{equation}
for $(k,p)=(I_{4}(n),n)$ and $(k,p)=(I_{14}(n),I_{14}(n))$ where $I_{14}(n):=I_{1}(I_{4}(n))$.

(WR.3) Let (WR.1a), (WR.1b) and (WR.2) be fulfilled. For every $n\in\N$, every closed subset $M\subset \overline{\Omega}_{n}$ and every component $N$ of $M^{C}$ we have
\[
N\cap \overline{\Omega}_{n}^{C}\neq \varnothing\;\Rightarrow\; N\cap X_{I_{214}(n)}\neq \varnothing
\]
where $I_{214}(n):=I_{2}(I_{14}(n))$.
\end{condWR}

We use the same convention for $I_{j}$ as for $J_{i}$ (see \prettyref{conv:index}). Condition (WR.1a) appears in 
\cite[p.\ 67]{Bierstedt1975} under the name $(RU)$ (cf.\ \cite[Remark 3.4, p.\ 239]{kruse2018_2}) as well. 
(WR.1a) is used for approximation by compactly supported functions, (WR.1b) to control Cauchy estimates,
(WR.1c) as well as (WR.2) to guarantee that several kinds of convolutions of the fundamental solution $z\mapsto g_{n}(z)/(\pi z)$ 
of the $\overline{\partial}$-operator with certain functionals are well-defined and (WR.3) to control 
the support of an analytic distribution by using the identity theorem.

We begin with the proof that $(WR)$ is sufficient for very weak reducibility. 
The underlying idea of the proof is extracted from a proof of H\"ormander \cite[Theorem 4.4.5, p.\ 112]{H1} 
in a comparable situation for non-weighted $\mathcal{C}^{\infty}$-functions. 
The proof is split into several parts to enhance comprehensibility.

\begin{thm}\label{thm:dense_proj_lim}
Let $n\in\N$. Then the space $\pi_{I_{214}(n),n}(\mathcal{E}\nu_{I_{214}(n),\overline{\partial}}(\Omega_{I_{214}(n)}))$ is 
dense in $\pi_{I_{14}(n),n}(\mathcal{E}\nu_{I_{14}(n),\overline{\partial}}(\Omega_{I_{14}(n)}))$ w.r.t.\
$(|\cdot|_{n,m})_{m\in\N_{0}}$ if $(WR)$ is fulfilled. In particular, $\mathcal{EV}_{\overline{\partial}}(\Omega)$ is very weakly reduced if $(WR)$ is fulfilled.
\end{thm}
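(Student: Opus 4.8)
The plan is to argue by duality. Abbreviate $V:=\pi_{I_{214}(n),n}(\mathcal{E}\nu_{I_{214}(n),\overline{\partial}}(\Omega_{I_{214}(n)}))$ and $W:=\pi_{I_{14}(n),n}(\mathcal{E}\nu_{I_{14}(n),\overline{\partial}}(\Omega_{I_{14}(n)}))$, both viewed as subspaces of the smooth functions on $\Omega_{n}$ carrying the seminorms $(|\cdot|_{n,m})_{m\in\N_{0}}$, so that $V\subset W$. By the Hahn--Banach density criterion it suffices to show that every continuous linear functional $L$ on $W$ with $L_{\mid V}=0$ vanishes identically. Since the $|\cdot|_{n,m}$ are defined on all of $\mathcal{E}\nu_{n}(\Omega_{n})$, I would first extend $L$ to a functional $\widetilde{L}$ on $\mathcal{E}\nu_{n}(\Omega_{n})$ with a bound $|\widetilde{L}(f)|\leq C|f|_{n,m_{0}}$ for some $m_{0}\in\N_{0}$, and then represent $\widetilde{L}$ by a distribution $T$ carried by a closed set $M\subset\overline{\Omega}_{n}$ (a finite combination, obtained from the Riesz representation theorem, of measures paired with $\partial^{\beta}f\,\nu_{n}$, $|\beta|\leq m_{0}$); here \prettyref{cond:dense} a)(i) is what lets me treat this support as effectively compact by approximating with compactly supported functions.

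The central object is the Cauchy transform
\[
\widehat{L}(y):=\widetilde{L}_{x}\Bigl(\tfrac{g_{I_{14}(n)}(x-y)}{\pi(x-y)}\Bigr)=T_{x}\Bigl(\tfrac{g_{I_{14}(n)}(x-y)}{\pi(x-y)}\Bigr),
\]
built from the fundamental solution $z\mapsto g_{I_{14}(n)}(z)/(\pi z)$ of $\overline{\partial}$ (a genuine fundamental solution since $g_{I_{14}(n)}(0)=1$). For $y\notin M$ the kernel is smooth in $x$ near $\operatorname{supp}T$ and depends holomorphically on $y$, so $\widehat{L}$ is holomorphic on $M^{C}\supset\overline{\Omega}_{n}^{C}$; the well-definedness of this pairing and of the convolutions below rests on the integrability bounds \prettyref{cond:dense} a)(iii) and b), which are tailored to absorb the $1/|x-y|$ singularity against the weights. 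The first key step is that $\widehat{L}$ vanishes on the open set $X_{I_{214}(n)}$: for $y\in X_{I_{214}(n)}\subset\R^{2}\setminus\overline{\Omega}_{I_{214}(n)}$ the map $x\mapsto g_{I_{14}(n)}(x-y)/(\pi(x-y))$ is holomorphic on $\Omega_{I_{214}(n)}$, and the Cauchy-estimate bound \eqref{pro.2} of \prettyref{cond:dense} a)(ii) shows, via Cauchy's integral formula applied to the derivatives, that it lies in $\mathcal{E}\nu_{I_{214}(n),\overline{\partial}}(\Omega_{I_{214}(n)})$. Its restriction to $\Omega_{n}$ thus belongs to $V$, on which $L=\widetilde{L}$ vanishes, whence $\widehat{L}(y)=0$.

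Next I would propagate this vanishing by the identity theorem. Any $z_{0}\in\overline{\Omega}_{n}^{C}$ lies in $M^{C}$; letting $N$ be its connected component in $M^{C}$ we have $N\cap\overline{\Omega}_{n}^{C}\neq\varnothing$, so \prettyref{cond:dense} c) forces $N\cap X_{I_{214}(n)}\neq\varnothing$, and as $\widehat{L}$ is holomorphic on $N$ and vanishes on the nonempty open set $N\cap X_{I_{214}(n)}$ it vanishes on all of $N$; hence $\widehat{L}\equiv 0$ on $\overline{\Omega}_{n}^{C}$. To recover the action of $L$ on $W$, take $f\in W$, write $f=F_{\mid\Omega_{n}}$ with $F$ holomorphic on $\Omega_{I_{14}(n)}$ satisfying the $\nu_{I_{14}(n)}$-bound, and pick $\chi\in\mathcal{C}^{\infty}$ equal to $1$ near $\overline{\Omega}_{n}$ with compact support in $\Omega_{I_{14}(n)}$ (possible since $\d^{|\cdot|}(\Omega_{n},\partial\Omega_{I_{14}(n)})>0$). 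The Cauchy--Pompeiu representation of $\chi F$, combined with $\overline{\partial}(\chi F)=F\,\overline{\partial}\chi$, gives for $x\in\Omega_{n}$
\[
f(x)=\int \tfrac{g_{I_{14}(n)}(x-y)}{\pi(x-y)}\,F(y)\,\overline{\partial}\chi(y)\,\d y,
\]
the integral running over $\operatorname{supp}\overline{\partial}\chi\subset\Omega_{I_{14}(n)}\setminus\overline{\Omega}_{n}$. Applying $\widetilde{L}$ in $x$ and interchanging it with the integral (again justified by \prettyref{cond:dense} a)(iii) and b)) yields $L(f)=\int \widehat{L}(y)\,F(y)\,\overline{\partial}\chi(y)\,\d y=0$, since the integrand is supported where $\widehat{L}$ vanishes. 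By Hahn--Banach this is the asserted density.

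The main obstacle I expect is not the duality skeleton but the analytic bookkeeping that makes it run: verifying through the Cauchy estimate \eqref{pro.2} that the shifted fundamental solution really lands in the weighted holomorphic space, and justifying both the holomorphy of $\widehat{L}$ and the two interchanges of $\widetilde{L}$ with an integral carrying the $1/|x-y|$ singularity. These are precisely the points for which the integrability hypotheses \prettyref{cond:dense} a)(iii) and b) were introduced, and making the weight indices match throughout (the compositions $I_{14}$ and $I_{214}$) is the delicate part.
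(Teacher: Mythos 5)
Your duality skeleton (annihilator functional, Hahn--Banach extension, Cauchy transform against the kernel $g_{I_{14}(n)}(x-y)/(\pi(x-y))$, vanishing on $X_{I_{214}(n)}$, identity theorem via Condition \ref{cond:dense} c)) is indeed the paper's strategy, but the proposal has a genuine gap at its center: you treat the Cauchy transform $\widehat{L}$ as a pointwise-defined holomorphic function on all of $M^{C}$, and in the last step you evaluate it at points of $\operatorname{supp}\overline{\partial}\chi\subset\Omega_{I_{14}(n)}\setminus\overline{\Omega}_{n}$. For $\widehat{L}(y)$ to make sense at such $y$, the function $x\mapsto E_{I_{14}(n)}(x-y)$ restricted to $\Omega_{n}$ must lie in the domain of $\widetilde{L}$, i.e.\ all weighted seminorms $\sup_{x\in\Omega_{n}}|\partial_{x}^{\beta}E_{I_{14}(n)}(x-y)|\nu_{n}(x)$ must be finite. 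Condition \ref{cond:dense} guarantees this only for $y\in X_{I_{214}(n)}+\mathbb{B}_{R}(0)$, via a)(ii)/\eqref{pro.2} (this is Lemma \ref{lem:prepare_convolution} b)); for the $y$ you actually need, conditions a)(iii) and b) control only \emph{integrals in $y$} of the kernel (the convolutions $T_{E}\ast\psi$ and $T_{E}\ast(\varphi f)$), never its pointwise weighted values. So neither the definition of $\widehat{L}$ on $M^{C}$, nor its holomorphy there (which additionally requires differentiating under $\widetilde{L}$, available via Lemma \ref{lem:convolution} b) only on $X_{I_{214}(n)}$), nor the final interchange of $\widetilde{L}$ with the integral is justified. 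Two further points fail concretely: a cut-off $\chi$ with \emph{compact} support in $\Omega_{I_{14}(n)}$ that equals $1$ near $\overline{\Omega}_{n}$ cannot exist when $\Omega_{n}$ is unbounded (the main case of interest, e.g.\ strips with exponential weights), so your Cauchy--Pompeiu identity for $\chi F$ is unavailable there; and the Riesz-representation step is dubious, since a functional dominated by a weighted sup-norm on the open set $\Omega_{n}$ is a priori represented only by finitely additive measures (the dual of a $C_{b}$-type space), not by a distribution you may pair with the kernel.

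The paper closes exactly these gaps by keeping the transform distributional: it defines $w\ast_{1}T_{\check{E}_{I_{14}}}$ by $\langle w\ast_{1}T_{\check{E}_{I_{14}}},\psi\rangle:=\langle w,(T_{E_{I_{14}}}\ast\psi)_{\mid\Omega_{n}}\rangle$ (well-defined by a)(iii) alone), shows $\overline{\partial}(w\ast_{1}T_{\check{E}_{I_{14}}})=-w_{0}$ where $w_{0}(\psi):=w(\psi_{\mid\Omega_{n}})$ is an honest distribution supported in $\overline{\Omega}_{n}$ (no Riesz theorem needed), and then invokes ellipticity of $\overline{\partial}$ to obtain a holomorphic representative $u$ on $(\operatorname{supp}w_{0})^{C}$ --- this, not pointwise differentiation of a transform, is where holomorphy on the whole complement comes from. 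The pointwise transform $w\ast_{2}\check{E}_{I_{14}}$ enters only on $X_{I_{214}(n)}$, and a Riemann-sum consistency argument (Lemma \ref{lem:convolution} c)) identifies it with $u$ there, so that the hypothesis on $w$ kills $u$ on $X_{I_{214}(n)}$ and Condition \ref{cond:dense} c) plus the identity theorem kill it on $\overline{\Omega}_{n}^{C}$ (Lemma \ref{lem:dense_component}). Finally, instead of evaluating the transform on $\operatorname{supp}\overline{\partial}\chi$, the paper approximates $f$ by compactly supported $\psi_{l}$ in $(|\cdot|_{I_{4},m})_{m}$ (Lemma \ref{lem:dense_comp_supp}, condition a)(i)), writes $\psi_{l}=T_{E_{I_{14}}}\ast\overline{\partial}\psi_{l}$, uses $\operatorname{supp}(w\ast_{1}T_{\check{E}_{I_{14}}})\subset\overline{\Omega}_{n}$ to insert the (non-compactly supported) cut-off $\varphi$ inside the pairing, and passes to the limit using the continuity estimate \eqref{lem4.0.1} for $w\ast_{\varphi}T_{\check{E}_{I_{14}}}$, which rests on \eqref{pro.4}. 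Repairing your write-up would require reproducing essentially these three lemmas.
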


In order to gain access to the theory of distributions in our approach, we prove another density statement first 
whose proof is a modification of the one of \cite[Lemma 3.14, p.\ 242]{kruse2018_2}.

\begin{lem}\label{lem:dense_comp_supp}
Let $n\in \N$. Then the space
$\pi_{I_{1}(n),n}(\mathcal{D}(\Omega_{I_{1}(n)}))$ is dense in the space 
$\pi_{I_{1}(n),n}(\mathcal{E}\nu_{I_{1}(n)}(\Omega_{I_{1}(n)}))$ w.r.t.\ $(|\cdot|_{n, m})_{m\in\N_{0}}$ 
if $(WR.1a)$ is fulfilled.  
\end{lem}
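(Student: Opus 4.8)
The plan is to establish the density by the classical cut-off technique. Fix $f\in\mathcal{E}\nu_{I_{1}(n)}(\Omega_{I_{1}(n)})$, an order $m\in\N_{0}$ and $\varepsilon>0$; the goal is to produce $\varphi\in\mathcal{D}(\Omega_{I_{1}(n)})$ with $|f-\varphi|_{n,m}$ small. I would take $\varphi:=\chi f$ for a cut-off function $\chi\in\mathcal{D}(\Omega_{I_{1}(n)})$ with $0\le\chi\le 1$ which equals $1$ on a neighbourhood of a large compact set. Since $f$ is smooth on $\Omega_{I_{1}(n)}$ and $\chi$ has compact support there, $\varphi$ indeed lies in $\mathcal{D}(\Omega_{I_{1}(n)})$, and the problem reduces to bounding $|(1-\chi)f|_{n,m}$ on $\Omega_{n}$.

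To select $\chi$ I would invoke \prettyref{cond:dense} a)(i): for the given $\varepsilon$ it yields a compact set $K\subset\overline{\Omega}_{n}$ with $\nu_{n}(x)\le\varepsilon\nu_{I_{1}(n)}(x)$ for all $x\in\Omega_{n}\setminus K$. Because $I_{1}(n)>n$, the preamble of \prettyref{cond:dense} gives $\d_{n,I_{1}(n)}=\d^{|\cdot|}(\Omega_{n},\partial\Omega_{I_{1}(n)})>0$, whence $\overline{\Omega}_{n}\subset\Omega_{I_{1}(n)}$ and in particular $K$ is a compact subset of $\Omega_{I_{1}(n)}$ at distance at least $\d_{n,I_{1}(n)}$ from $\partial\Omega_{I_{1}(n)}$. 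Choosing $\chi$ to equal $1$ on a neighbourhood of $K$ and to be supported in the $\d_{n,I_{1}(n)}/2$-neighbourhood of $K$, the function $1-\chi$ and all its derivatives vanish near $K$, so in $|(1-\chi)f|_{n,m}$ only points $x\in\Omega_{n}\setminus K$ contribute. Expanding $\partial^{\beta}((1-\chi)f)$ by the Leibniz rule, using $\nu_{n}(x)\le\varepsilon\nu_{I_{1}(n)}(x)$ on this set, and estimating each factor $|\partial^{\beta-\gamma}f(x)|\nu_{I_{1}(n)}(x)\le|f|_{I_{1}(n),m}$ (valid since $x\in\Omega_{n}\subset\Omega_{I_{1}(n)}$ and $|\beta-\gamma|\le m$), I would arrive at an estimate of the shape $|(1-\chi)f|_{n,m}\le\varepsilon\,C_{m}\,|f|_{I_{1}(n),m}$, where $C_{m}$ collects binomial coefficients and the sup-norms $\|\partial^{\gamma}\chi\|_{\infty}$ for $|\gamma|\le m$.

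The delicate point, and the one I would treat most carefully, is that $K=K_{\varepsilon}$ grows as $\varepsilon\to 0$, so a priori the cut-off $\chi=\chi_{\varepsilon}$ and hence $C_{m}$ could degenerate. This is resolved precisely by the geometric separation $\d_{n,I_{1}(n)}>0$: since every $K_{\varepsilon}$ sits at distance at least $\d_{n,I_{1}(n)}$ from $\partial\Omega_{I_{1}(n)}$, one may build $\chi_{\varepsilon}$ by mollifying the indicator of the $\d_{n,I_{1}(n)}/4$-neighbourhood of $K_{\varepsilon}$ with a fixed mollifier of width $\d_{n,I_{1}(n)}/4$, so that $\|\partial^{\gamma}\chi_{\varepsilon}\|_{\infty}$ is bounded by a constant depending only on $m$ and $\d_{n,I_{1}(n)}$, uniformly in $\varepsilon$. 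With $C_{m}$ thus fixed and $|f|_{I_{1}(n),m}<\infty$ because $f\in\mathcal{E}\nu_{I_{1}(n)}(\Omega_{I_{1}(n)})$, letting $\varepsilon\to 0$ drives $|f-\varphi|_{n,m}$ to $0$, which yields the asserted density of $\pi_{I_{1}(n),n}(\mathcal{D}(\Omega_{I_{1}(n)}))$.
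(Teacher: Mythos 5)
Your proposal is correct and follows essentially the same route as the paper: the paper also approximates $f$ by $\varphi f$ with a cut-off $\varphi\in\mathcal{D}(\Omega_{I_{1}(n)})$ equal to $1$ near the compact set $K$ from Condition a)(i) and satisfying $|\partial^{\alpha}\varphi|\leq C_{\alpha}(\d_{n,I_{1}(n)}/2)^{-|\alpha|}$ with $C_{\alpha}$ independent of $\varepsilon$ (citing H\"ormander's Theorem 1.4.1, whose construction is precisely your mollified-indicator argument), and then runs the same Leibniz-rule estimate against $\nu_{n}\leq\varepsilon\nu_{I_{1}(n)}$ off $K$. The only quibble is your final choice of radii: mollifying the indicator of the $\d_{n,I_{1}(n)}/4$-neighbourhood of $K_{\varepsilon}$ at width $\d_{n,I_{1}(n)}/4$ makes $\chi_{\varepsilon}=1$ on $K_{\varepsilon}$ but not on a neighbourhood of it, which your Leibniz step needs; mollify the indicator of the $\d_{n,I_{1}(n)}/2$-neighbourhood at width $\d_{n,I_{1}(n)}/4$ instead.
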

\begin{proof}
Let $f\in\mathcal{E}\nu_{I_{1}}(\Omega_{I_{1}})$ and $\varepsilon>0$. Due to $(WR.1a)$ there is a compact set 
$K\subset\overline{\Omega}_{n}$ such that 
\begin{equation}\label{lem2.1}
\sup_{x \in \Omega_{n}\setminus K}\frac{\nu_{n}(x)}{\nu_{I_{1}}(x)}\leq\varepsilon
\end{equation}
where we use the convention $\sup_{x\in\varnothing}\tfrac{\nu_{n}(x)}{\nu_{I_{1}}(x)}:=-\infty$.
Like in the proof of \cite[Theorem 1.4.1, p.\ 25]{H1} we can find $\varphi\in\mathcal{D}(\Omega_{I_{1}})$, 
$0\leq \varphi\leq 1$, such that $\varphi=1$ near $K$ and
\begin{equation}\label{lem2.2}
|\partial^{\alpha}\varphi|\leq C_{\alpha}D^{-|\alpha|}
\end{equation}
for all $\alpha\in\N^{2}_{0}$ where $D:=\d_{n,I_{1}}/2$ and $C_{\alpha}>0$ is a constant 
only depending on $\alpha$. Then $\varphi f\in\mathcal{D}(\Omega_{I_{1}})$ and 
with $K_{0}:=\operatorname{supp}\varphi$ we have for $m\in\N_{0}$ by the Leibniz rule
\begin{flalign*}
&\hspace{0.37cm}|\varphi f -f|_{n,m}\\
&\leq\sup_{\substack{x \in \Omega_{n}\setminus K \\ \alpha\in\N^{2}_{0}, |\alpha| \leq m}}
 |\partial^{\alpha}( \varphi f)(x)|\nu_{n}(x)
 +\sup_{\substack{x \in \Omega_{n}\setminus K\\ \alpha\in\N^{2}_{0}, |\alpha| \leq m}}
 |\partial^{\alpha} f(x)|\nu_{n}(x)\\
&\leq \sup_{\substack{x \in (\Omega_{n}\setminus K)\cap K_{0}\\ \alpha\in\N^{2}_{0}, |\alpha| \leq m}}
 \bigl|\sum_{\gamma\leq \alpha}\dbinom{\alpha}{\gamma}\partial^{\alpha-\gamma} \varphi(x)
 \partial^{\gamma}f(x)\bigr|\nu_{n}(x)
 +\sup_{\substack{x \in \Omega_{n}\setminus K\\ \alpha\in\N^{2}_{0},|\alpha| \leq m}}
 |\partial^{\alpha} f(x)|\nu_{I_{1}}(x)\frac{\nu_{n}(x)}{\nu_{I_{1}}(x)}\\
&\underset{\mathclap{\eqref{lem2.1}}}{\leq}\; \sup_{\alpha\in\N^{2}_{0},|\alpha| \leq m}
 \sum_{\gamma\leq \alpha}\dbinom{\alpha}{\gamma}\sup_{x \in K_{0}}|\partial^{\alpha-\gamma} \varphi(x)| 
 \bigl(\sup_{\substack{x \in \Omega_{n}\setminus K\\ \beta\in\N^{2}_{0},|\beta| \leq m}}
 |\partial^{\beta}f(x)|\nu_{n}(x)\bigr)
 +\varepsilon |f|_{I_{1},m}\\
&\underset{\mathclap{\eqref{lem2.1},\eqref{lem2.2}}}{\leq}\quad\underbrace{\sup_{\alpha\in\N^{2}_{0},|\alpha|\leq m}
 \sum_{\gamma\leq\alpha}\dbinom{\alpha}{\gamma}C_{\alpha-\gamma}D^{-|\alpha-\gamma|}}_{:=C(m,D)}
 \varepsilon|f|_{I_{1},m}+\varepsilon |f|_{I_{1},m}
=(C(m,D)+1)|f|_{I_{1},m}\varepsilon
\end{flalign*}
where $C(m,D)$ is independent of $\varepsilon$, proving the density.
\end{proof}

The next lemma is devoted to a special fundamental solution of the $\overline{\partial}$-operator and its properties, namely to 
$E_{n}\colon \C\setminus \{0\}\to \C,\; E_{n}(z):=\frac{g_{n}(z)}{\pi z},$ with $g_{n}$ 
from $(WR)$.

\begin{lem}\label{lem:prepare_convolution} 
Let $n\in\N$ and $(WR.1b)$, $(WR.1c)$ and $(WR.2)$ be fulfilled.
\begin{enumerate}
	\item [a)] Then $\overline{\partial}T_{E_{n}}=\delta$ in the distributional sense.
	\item [b)] Let $x\in X_{I_{2}(n)}+\mathbb{B}_{R_{n}}(0)$ and $\alpha\in\N^{2}_{0}$. Then 
	$\partial^{\alpha}_{x}[E_{n}(\cdot-x)]\in
	\mathcal{E}\nu_{I_{2}(n),\overline{\partial}}(\Omega_{I_{2}(n)})$.
	\item [c)] Let $K\subset\R^{2}$ be a compact set and $m\in\N_{0}$. Then 
  \begin{equation}\label{lem3.1}
   |T_{E_{n}}\ast\psi|_{n,m}\leq \frac{A_{3}(n,K)}{\pi}\|\psi\|_{m},\quad \psi\in\mathcal{C}^{\infty}_{c}(K),
  \end{equation}	
  with the convolution from \eqref{distr.falt.}. 
  In particular, $T_{E_{n}}\ast\psi\in\mathcal{E}\nu_{n}(\Omega_{n})$.
	\item [d)]
   \begin{enumerate}
	 \item [(i)] There exists $\varphi\in\mathcal{C}^{\infty}(\R^{2})$, $0\leq\varphi\leq 1$, such that 
	 $\varphi=1$ near $\overline{\Omega}_{n}$ and $\varphi=0$ near $\Omega_{I_{4}(n)}^{C}$ plus
   \begin{equation}\label{lem3.2}
    |\partial^{\alpha}\varphi|\leq c_{\alpha}\d_{n,I_{4}(n)}^{-|\alpha|}
   \end{equation}
   for all $\alpha\in\N^{2}_{0}$ where $c_{\alpha}>0$ is a constant only depending on $\alpha$.
   \item [(ii)] Choose $\varphi\in\mathcal{C}^{\infty}(\R^{2})$ like in (i) and $m\in\N_{0}$. 
   Then there is $A_{5}=A_{5}(n,m)$ such that
   \begin{equation}\label{lem3.3}
    |T_{E_{I_{14}(n)}}\ast(\varphi f)|_{p,m}\leq A_{5}|f|_{k,m}, 
    \quad f\in\mathcal{E}\nu_{I_{14}(n)}(\Omega_{I_{14}(n)}),
   \end{equation}
   for $(k,p)=(I_{4}(n),n)$ and $(k,p)=(I_{14}(n),I_{14}(n))$ where the convolution is defined by 
   the right-hand side of \eqref{distr.falt.} and we set $\varphi f:=0$ outside $\Omega_{I_{14}(n)}$.
   In particular, $T_{E_{I_{14}(n)}}\ast(\varphi f)\in\mathcal{E}\nu_{I_{14}(n)}(\Omega_{I_{14}(n)})$.
   \end{enumerate}
\end{enumerate}
\end{lem}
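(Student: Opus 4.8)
The plan is to establish each of the four assertions (a)--(d) in turn, since they concern a single fundamental solution $E_{n}(z)=g_{n}(z)/(\pi z)$ and its convolutions. The organizing principle throughout is that $1/(\pi z)$ is the standard fundamental solution of $\overline{\partial}$, and the holomorphic factor $g_{n}$ (with $g_{n}(0)=1$) modifies only the behaviour away from the singularity at $0$ while leaving the residue/singularity structure intact near $z=0$. For part (a), I would write $T_{E_{n}}=g_{n}\cdot T_{1/(\pi z)}$ and compute $\overline{\partial}(g_{n}T_{1/(\pi z)})$ using the product/Leibniz rule for distributions. Since $g_{n}\in\mathcal{O}(\C)$ we have $\overline{\partial}g_{n}=0$, so the term involving a derivative of $g_{n}$ vanishes, and we are left with $g_{n}\cdot\overline{\partial}T_{1/(\pi z)}=g_{n}\cdot\delta=g_{n}(0)\delta=\delta$, using the classical fact $\overline{\partial}T_{1/(\pi z)}=\delta$ (this is \cite[Theorem 1.2.2]{H1} or the analogous cited result) together with $g_{n}(0)=1$.

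For part (b), fix $x\in X_{I_{2}(n)}+\mathbb{B}_{R_{n}}(0)$ and note that on $\Omega_{I_{2}(n)}$ the point $z-x$ stays away from $0$ (because $x$ lies at distance at least $\d_{X,I_{2}(n)}-R_{n}>0$ from $\Omega_{I_{2}(n)}$), so $E_{n}(\cdot-x)$ is genuinely smooth and holomorphic in $z$ there; hence $\partial^{\alpha}_{x}[E_{n}(\cdot-x)]$ lies in $\ker\overline{\partial}$. The content is the weighted estimate $|\partial^{\alpha}_{x}[E_{n}(\cdot-x)]|_{I_{2}(n),m}<\infty$, for which I would use the Cauchy integral formula over the circle of radius $r_{n}$ to express the derivatives in $x$ of $E_{n}(z-x)$ in terms of the values of $E_{n}(z-\cdot)$ on $|\zeta-(z-x)|=r_{n}$, and then invoke the bound \eqref{pro.2} to control $|g_{n}(\zeta)|\nu_{I_{2}(n)}(z)$ by $A_{2}(x,n)$; the radius $r_{n}$ is exactly chosen so that the circle avoids the singularity and stays in the region where the estimate applies.

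For part (c), the convolution formula \eqref{distr.falt.} gives $(T_{E_{n}}\ast\psi)(x)=\int_{\R^{2}}E_{n}(y)\psi(x-y)\,dy$; the key estimate $|T_{E_{n}}\ast\psi|_{n,m}\le \tfrac{A_{3}(n,K)}{\pi}\|\psi\|_{m}$ follows by differentiating under the integral, transferring derivatives onto $\psi$ (writing $\partial^{\alpha}(T_{E_{n}}\ast\psi)=T_{E_{n}}\ast(\partial^{\alpha}\psi)$), changing variables, multiplying by $\nu_{n}(x)$, and then bounding the resulting integral $\int_{K}\tfrac{|g_{n}(x-y)|\nu_{n}(x)}{\pi|x-y|}\,dy$ directly by $A_{3}(n,K)/\pi$ via \eqref{cond:dense} a)(iii). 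Smoothness of $T_{E_{n}}\ast\psi$ is automatic since convolution of a distribution with a test function is $\mathcal{C}^{\infty}$. Part (d)(i) is the standard construction of a smooth cutoff $\varphi$ that is $1$ on a neighbourhood of $\overline{\Omega}_{n}$ and $0$ off $\Omega_{I_{4}(n)}$, with derivative bounds \eqref{lem3.2} controlled by the separation $\d_{n,I_{4}(n)}$; I would obtain it exactly as in the cited construction \cite[Theorem 1.4.1]{H1} (mollifying the indicator of an intermediate set). Part (d)(ii) is then the analogue of (c) but with $\psi$ replaced by the non-compactly-supported-in-general $\varphi f$: here I expect the main obstacle, because one must simultaneously differentiate the convolution, distribute derivatives via Leibniz onto $\varphi$ and $f$, insert the weights, and estimate the resulting integral $\int_{\Omega_{I_{4}(n)}}\tfrac{|g_{I_{14}(n)}(x-y)|\nu_{p}(x)}{|x-y|\nu_{k}(y)}\,dy$ against $A_{4}(n)$ using \eqref{pro.4}, while tracking that the support of $\varphi f$ lets one bound $|f(y)|\nu_{k}(y)^{-1}\cdot\nu_{k}(y)$ by $|f|_{k,m}$ uniformly. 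The delicate point is matching the two index pairs $(k,p)=(I_{4}(n),n)$ and $(k,p)=(I_{14}(n),I_{14}(n))$ to the two instances of \eqref{pro.4}, and ensuring the derivatives landing on $\varphi$ (which produce factors $\d_{n,I_{4}(n)}^{-|\alpha|}$ from \eqref{lem3.2}) are absorbed into the constant $A_{5}$ rather than disrupting the integrability; once that bookkeeping is done, the final membership $T_{E_{I_{14}(n)}}\ast(\varphi f)\in\mathcal{E}\nu_{I_{14}(n)}(\Omega_{I_{14}(n)})$ follows from the finiteness of all the seminorms together with smoothness of the convolution.
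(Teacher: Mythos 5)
Your proposal is correct and follows essentially the same route as the paper's proof: part a) via the smooth-function product rule and holomorphy of $g_{n}$ (the paper just unwinds this identity against test functions), part b) via Cauchy estimates on the circle of radius $r_{n}$ combined with \eqref{pro.2}, part c) by differentiating under the integral and applying \prettyref{cond:dense} a)(iii), and part d) via the standard cutoff plus the Leibniz rule and the weight-insertion trick $\nu_{k}(x-y)\nu_{k}(x-y)^{-1}$ that reduces the estimate to \eqref{pro.4}. The only difference is one of packaging, not substance.
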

\begin{proof}
$a)$ Let $\varphi\in\mathcal{D}(\C)$ and set $E_{0}(z):=\frac{1}{\pi z}$, $z\neq 0$. 
Using $g_{n}\in\mathcal{O}(\C)$, $g_{n}(0)=1$ and the fact that $T_{E_{0}}$ is a fundamental solution 
of the $\overline{\partial}$-operator by \cite[Eq.\ (3.1.12), p.\ 63]{H1}, we get
\begin{align*}
\langle \overline{\partial}T_{E_{n}}, \varphi\rangle &= -\langle T_{E_{n}}, \overline{\partial}\varphi\rangle 
 = -\langle T_{E_{0}}, g_{n}\overline{\partial}\varphi\rangle 
 = -\langle T_{E_{0}}, \overline{\partial}(g_{n}\varphi)\rangle\\
&= \langle \overline{\partial}T_{E_{0}}, g_{n}\varphi\rangle
 = \langle \delta, g_{n}\varphi\rangle
 =g_{n}(0)\varphi(0)=\varphi(0)=\langle \delta, \varphi\rangle.
\end{align*}

$b)$ Since $x\in (X_{I_{2}}+\mathbb{B}_{R_{n}}(0))\subset \Omega_{I_{2}}^{C}$, it follows 
$\partial^{\alpha}_{x}[E_{n}(\cdot-x)]\in\mathcal{O}(\Omega_{I_{2}})$.
Let $z\in \Omega_{I_{2}}$ and $\beta\in\N^{2}_{0}$. We get by the Cauchy inequality and $(WR.1b)$
\begin{align*}
|\partial^{\beta}_{z}\partial^{\alpha}_{x}[E_{n}(z-x)]|
&\underset{\mathclap{\eqref{lem1}}}{=} |i^{\alpha_{2}+\beta_{2}}(-1)^{|\alpha|}E_{n}^{(|\alpha+\beta|)}(z-x)|\\
&\leq\frac{|\alpha+\beta|!}{r_{n}^{|\alpha+\beta|}}\max_{|\zeta-(z-x)|=r_{n}}{|E_{n}(\zeta)|}\\
&\leq\frac{1}{\pi}\frac{|\alpha+\beta|!}{r_{n}^{|\alpha+\beta|}(\d_{X,I_{2}}-R_{n}-r_{n})}\max_{|\zeta-(z-x)|=r_{n}}
{|g_{n}(\zeta)|}
\end{align*}		
and hence
\begin{flalign*}
&\hspace{0.37cm}|\partial^{\alpha}_{x}[E_{n}(\cdot-x)]|_{I_{2},m}\\
&\underset{\eqref{pro.2}}{\leq}\frac{1}{\pi}\sup_{\beta\in\N^{2}_{0},|\beta|\leq m}
\frac{|\alpha+\beta|!}{r_{n}^{|\alpha+\beta|}(\d_{X,I_{2}}-R_{n}-r_{n})}A_{2}(x,n)<\infty.
\end{flalign*}

$c)$ By the definition of distributional convolution $T_{E_{n}}\ast\psi\in\mathcal{C}^{\infty}(\R^{2})$ 
and for $x\in\R^{2}$ and $\alpha\in\N^{2}_{0}$ the following inequalities hold
\begin{align*}
|\partial^{\alpha}(T_{E_{n}}\ast\psi)(x)|
&=\bigl|\int_{\R^{2}}{E_{n}(y)(\partial^{\alpha}\psi)(x-y)\d y}\bigr|
\leq \|\psi\|_{|\alpha|}\int_{x-K}{|E_{n}(y)|\d y}\\
&=\frac{1}{\pi}\|\psi\|_{|\alpha|}\int_{K}{\frac{|g_{n}(x-y)|}{|x-y|}\d y}
\end{align*}
and thus by $(WR.1c)$
\[
|T_{E_{n}}\ast\psi|_{n,m}
\leq\frac{1}{\pi}A_{3}(n,K)\|\psi \|_{m}.
\]

$d)$ The existence of $\varphi$ follows from the proof of \cite[Theorem 1.4.1, p.\ 25]{H1}. 
Now, let $x\in\Omega_{p}$ and $\alpha\in\N^{2}_{0}$. Then we have by $(WR.2)$, the Leibniz rule and due to 
$\operatorname{supp}\varphi\subset \Omega_{I_{4}}$ that
\begin{flalign*}
&\hspace{0.37cm}\bigl|\int_{\R^{2}}{E_{I_{14}}(y)\partial^{\alpha}(f\varphi)(x-y)\d y}\bigr|\\
&\leq\int_{x-\Omega_{I_{4}}}{|E_{I_{14}}(y)\partial^{\alpha}(f\varphi)(x-y)|\nu_{k}(x-y)\nu_{k}(x-y)^{-1}\d y}\\
&\leq \sup_{z \in \Omega_{I_{4}}}{|\partial^{\alpha}(f\varphi)(z)|\nu_{k}(z)}
 \int_{\Omega_{I_{4}}}{|E_{I_{14}}(x-y)|\nu_{k}(y)^{-1}\d y}\\
&\leq\phantom{\cdot}\sum_{\gamma\leq \alpha}{\dbinom{\alpha}{\gamma}\sup_{z \in\Omega_{I_{4}}}|\partial^{\alpha-\gamma} \varphi(z)|} 
 \sup_{\substack{w \in\Omega_{I_{4}}\\ \beta\in\N^{2}_{0}, |\beta| \leq |\alpha|}}
 |\partial^{\beta}f(w)|\nu_{k}(w)\\
&\phantom{\underset{\mathclap{\eqref{pro.4}}}{\leq}}\cdot \int_{\Omega_{I_{4}}}{|E_{I_{14}}(x-y)|\nu_{k}(y)^{-1}\d y}\\
&\underset{\mathclap{\eqref{lem3.2}}}{\leq}\phantom{\cdot}\underbrace{\sum_{\gamma\leq \alpha}{\dbinom{\alpha}{\gamma}
 c_{\alpha-\gamma}\d_{n,I_{4}}^{-|\alpha-\gamma|}}}_{=:C_{0}(\alpha,n)}
 \sup_{\substack{w \in\Omega_{I_{4}}\\\beta\in\N^{2}_{0},|\beta| \leq |\alpha|}}|\partial^{\beta}f(w)|\nu_{k}(w)\\
&\phantom{\underset{\mathclap{\eqref{pro.4}}}{\leq}}\cdot \int_{\Omega_{I_{4}}}{|E_{I_{14}}(x-y)|\nu_{k}(y)^{-1}\d y} \\
&\underset{\mathclap{\eqref{pro.4}}}{\leq}\frac{A_{4}(n)C_{0}(\alpha,n)}{\nu_{p}(x)}
 \sup_{\substack{w\in\Omega_{I_{4}}\\ \beta\in\N^{2}_{0},|\beta| \leq |\alpha|}}|\partial^{\beta}f(w)|\nu_{k}(w).
\end{flalign*}
Thus $T_{E_{I_{14}}}\ast(\varphi f)\in \mathcal{C}^{\infty}(\Omega_{p})$ and 
\[
\partial^{\alpha}(T_{E_{I_{14}}}\ast(\varphi f))(x)=\int_{\R^{2}}{E_{I_{14}}(y)\partial^{\alpha}(f\varphi)(x-y)\d y}
\]
by differentiation under the integral sign as well as 
\[
|T_{E_{I_{14}}}\ast(\varphi f)|_{p,m}
\leq A_{4}(n)\sup_{\alpha\in\N^{2}_{0},|\alpha|\leq m}C_{0}(\alpha,n)|f|_{k,m}
\]
for $(k,p)=(I_{4},n)$ and $(k,p)=(I_{14},I_{14})$.
\end{proof}

The next step is to define different kinds of convolutions and study their relations and properties, 
which shall be exploited in the proof of the density theorem.

\begin{lem}\label{lem:convolution}
Let $n\in\N$, $(WR.1b)$, $(WR.1c)$ and $(WR.2)$ be fulfilled and 
$w\in(\pi_{I_{14}(n),n}(\mathcal{E}\nu_{I_{14}(n)}(\Omega_{I_{14}(n)})), (|\cdot|_{n,m})_{m\in\N_{0}})'$.
\begin{enumerate}
 \item [a)] For $\psi\in\mathcal{D}(\R^{2})$ we define
  \[
   \langle w \ast_{1} T_{\check{E}_{I_{14}}}, \psi\rangle:=\langle w,(T_{E_{I_{14}}}\ast \psi)_{\mid\Omega_{n}}\rangle. 
	\]
 Then $w \ast_{1} T_{\check{E}_{I_{14}}}\in\mathcal{D}'(\R^{2})$.
 \item [b)] For $x\in X_{I_{214}}$ we define
	\[
	 (w \ast_{2} \check{E}_{I_{14}})(x):=\langle w,E_{I_{14}}(\cdot-x)_{\mid\Omega_{n}}\rangle.
	\]
 Then $w \ast_{2} \check{E}_{I_{14}}\in\mathcal{C}^{\infty}(X_{I_{214}})$ and for $\alpha\in\N^{2}_{0}$
  \begin{equation}\label{lem4.1}
    \partial^{\alpha}_{x}(w \ast_{2} \check{E}_{I_{14}})(x)
   =\langle w, \partial^{\alpha}_{x}[E_{I_{14}}(\cdot-x)]_{\mid\Omega_{n}}\rangle.
  \end{equation}
 \item [c)] For $\psi\in\mathcal{D}(\R^{2})$ with $\operatorname{supp}\psi\subset X_{I_{214}}$ 
 the preceding definitions of convolution are consistent, i.e.\
  \[
   \langle w \ast_{1} T_{\check{E}_{I_{14}}}, \psi\rangle = \langle T_{w \ast_{2} \check{E}_{I_{14}}},\psi\rangle.
  \]
 \item [d)] Choose $\varphi$ like in \prettyref{lem:prepare_convolution} d), let $m\in\N_{0}$ and 
 for  $f\in\mathcal{E}\nu_{I_{14}}(\Omega_{I_{14}})$ we define
	\[
	 \langle w\ast_{\varphi} T_{\check{E}_{I_{14}}},f\rangle:=\langle w, [T_{E_{I_{14}}}\ast (\varphi f)]_{\mid\Omega_{n}}\rangle .
	\]
 Then there exists a constant $A_{6}=A_{6}\left(w,n,m\right)>0$ such that
	\begin{equation}\label{lem4.0.1}
	 |\langle w\ast_{\varphi} T_{\check{E}_{I_{14}}}, f\rangle|\leq A_{6} |f|_{I_{4},m}.
	\end{equation}
\end{enumerate}
\end{lem}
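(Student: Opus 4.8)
The plan is to dispatch the four parts in order, each time reducing to the estimates of \prettyref{lem:prepare_convolution} together with the continuity of $w$. Write $N:=I_{14}$ and let $G:=\pi_{N,n}(\mathcal{E}\nu_{N}(\Omega_{N}))$ denote the domain of $w$, carrying the seminorms $(|\cdot|_{n,m})_{m\in\N_{0}}$; continuity of $w$ yields $C>0$ and $\mu\in\N_{0}$ with $|\langle w,g\rangle|\leq C|g|_{n,\mu}$ for all $g\in G$. For a) I would first note that \prettyref{lem:prepare_convolution} c), applied at index $N$, gives $T_{E_{N}}\ast\psi\in\mathcal{E}\nu_{N}(\Omega_{N})$ for $\psi\in\mathcal{C}^{\infty}_{c}(K)$, so $(T_{E_{N}}\ast\psi)_{\mid\Omega_{n}}\in G$ and the pairing is well defined and linear. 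Fixing a compact $K$ and using that $\nu_{n}\leq\nu_{N}$ on $\Omega_{n}\subset\Omega_{N}$ (hence $|h|_{n,\mu}\leq|h|_{N,\mu}$) together with \eqref{lem3.1}, I obtain
\[
|\langle w\ast_{1}T_{\check{E}_{N}},\psi\rangle|\leq C\,|T_{E_{N}}\ast\psi|_{N,\mu}\leq \tfrac{C}{\pi}A_{3}(N,K)\,\|\psi\|_{\mu},
\]
which is exactly continuity on each $\mathcal{C}^{\infty}_{c}(K)$ and therefore $w\ast_{1}T_{\check{E}_{N}}\in\mathcal{D}'(\R^{2})$.

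The heart of the lemma is b). The goal is to show that $x\mapsto E_{N}(\cdot-x)_{\mid\Omega_{n}}$ is a $\mathcal{C}^{\infty}$-map from $X_{I_{214}}$ into $G$ and then to interchange $w$ with $x$-derivatives. \prettyref{lem:prepare_convolution} b), applied at index $N$, shows that for $x\in X_{I_{214}}+\mathbb{B}_{R_{N}}(0)$ and every $\alpha\in\N^{2}_{0}$ one has $\partial^{\alpha}_{x}[E_{N}(\cdot-x)]\in\mathcal{E}\nu_{I_{214},\overline{\partial}}(\Omega_{I_{214}})$, whose restrictions to $\Omega_{n}$ therefore lie in $G$ by monotonicity of the weights, with $|\cdot|_{n,m}$-seminorms that are finite and, through the Cauchy bound involving the locally bounded $A_{2}(\cdot,N)$, locally uniformly bounded in $x$. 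I would then expand the $x$-difference quotient and write its deviation from $\partial_{x_{j}}[E_{N}(\cdot-x)]$ as a segment integral of the next $x$-derivative, bounding the $|\cdot|_{n,m}$-seminorm of the remainder by these Cauchy estimates; this forces the difference quotient to converge to $\partial_{x_{j}}[E_{N}(\cdot-x)]$ in every seminorm. Since $w$ is continuous it passes through the limit, giving differentiability and \eqref{lem4.1}, and iteration yields $w\ast_{2}\check{E}_{N}\in\mathcal{C}^{\infty}(X_{I_{214}})$. (Equivalently, $x\mapsto E_{N}(\cdot-x)$ depends holomorphically on $x\in X_{I_{214}}$, which gives a second route to smoothness.)

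For c) I would substitute $x=z-y$ to write, for $z\in\Omega_{n}$, $(T_{E_{N}}\ast\psi)(z)=\int_{X_{I_{214}}}E_{N}(z-x)\psi(x)\,\d x$, where the integrand is the continuous (by b)) and compactly supported $G$-valued map $x\mapsto E_{N}(\cdot-x)_{\mid\Omega_{n}}\psi(x)$. Approximating by Riemann sums — finite combinations of elements of $G$ that converge to $(T_{E_{N}}\ast\psi)_{\mid\Omega_{n}}$ in each $|\cdot|_{n,m}$ by uniform continuity on $\operatorname{supp}\psi$ — and invoking continuity of $w$ lets me interchange $w$ with the integral, which is precisely the asserted equality with $T_{w\ast_{2}\check{E}_{N}}$. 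For d) the final assertion of \prettyref{lem:prepare_convolution} d)(ii) gives $T_{E_{N}}\ast(\varphi f)\in\mathcal{E}\nu_{N}(\Omega_{N})$, so the pairing is defined, and \eqref{lem3.3} with $(k,p)=(I_{4},n)$ bounds $|T_{E_{N}}\ast(\varphi f)|_{n,m}\leq A_{5}|f|_{I_{4},m}$. For $m\geq\mu$ we have $|\langle w,g\rangle|\leq C|g|_{n,m}$, so combining the two estimates yields \eqref{lem4.0.1} with $A_{6}:=CA_{5}$.

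The main obstacle is the differentiation-under-the-functional step in b): one must upgrade the pointwise Cauchy bounds of \prettyref{lem:prepare_convolution} b) to convergence of $x$-difference quotients in the Fr\'echet seminorms $(|\cdot|_{n,m})_{m}$, uniformly for $x$ in compact subsets of $X_{I_{214}}$. Once the continuity and smoothness of $x\mapsto E_{N}(\cdot-x)_{\mid\Omega_{n}}$ are secured, parts a), c) and d) follow comparatively directly, with c) needing in addition only the interchange of $w$ with the vector-valued integral that this continuity licenses.
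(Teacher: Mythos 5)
Your proposal is correct and takes essentially the same approach as the paper's proof: parts a) and d) are the same direct combination of the continuity of $w$ with \eqref{lem3.1} resp.\ \eqref{lem3.3}, part b) is the same difference-quotient argument with remainder controlled by the Cauchy estimates furnished by \prettyref{cond:dense} a)(ii) and the local boundedness of $A_{2}(\cdot,I_{14})$, and part c) is the same Riemann-sum approximation of $T_{E_{I_{14}}}\ast\psi$ followed by passing $w$ through the limit and identifying the scalar Riemann sums of $w\ast_{2}\check{E}_{I_{14}}$ with the integral. The only difference is presentational: where the paper verifies the Riemann-sum convergence in c) by explicit mean-value and Cauchy-formula estimates (its steps (i)--(ii)), you deduce it from the uniform continuity on $\operatorname{supp}\psi$ of the map $x\mapsto E_{I_{14}}(\cdot-x)_{\mid\Omega_{n}}\psi(x)$ into $\pi_{I_{14},n}(\mathcal{E}\nu_{I_{14}}(\Omega_{I_{14}}))$, which is legitimate since b) supplies that continuity and the limit is identified pointwise.
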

\begin{proof}
$a)$ $w \ast_{1} T_{\check{E}_{I_{14}}}$ is defined by \prettyref{lem:prepare_convolution} c). 
Let $K\subset \R^{2}$ be compact. Since $w$ is continuous, there exist $C>0$ and $m\in\N_{0}$ such that
\begin{align*}
 |\langle w \ast_{1} T_{\check{E}_{I_{14}}},\psi\rangle|
&=|\langle w , (T_{E_{I_{14}}}\ast\psi)_{\mid\Omega_{n}}\rangle|
 \leq C |T_{E_{I_{14}}}\ast\psi|_{n,m}\\
&\leq C |T_{E_{I_{14}}}\ast\psi |_{I_{14},m}
 \underset{\eqref{lem3.1}}{\leq}\frac{CA_{3}(I_{14},K)}{\pi} \|\psi\|_{m}
\end{align*}
for all $\psi\in\mathcal{C}^{\infty}_{c}(K)$, thus $w \ast_{1} T_{\check{E}_{I_{14}}}\in\mathcal{D}'(\R^{2})$.

$b)$ $w \ast_{2} \check{E}_{I_{14}}$ and the right-hand side of \eqref{lem4.1} are defined 
by \prettyref{lem:prepare_convolution} b) since $I_{214}>I_{14}$. For $h\in\R$ with $0<|h|$ small enough and 
$x\in X_{I_{214}}$ we define
\[
 \psi_{h}(x)\colon \Omega_{I_{214}}\to\R^{2}, \; 
 \psi_{h}(x)[y]:=\frac{E_{I_{14}}(y-(x+he_{l}))-E_{I_{14}}(y-x)}{h}
\]
where $e_{l}$, $l=1,2$, is the $l$th canonical unit vector in $\R^{2}$. 
For $0<|h|<R_{I_{14}}$ we have $x+he_{l}\in X_{I_{214}}+\mathbb{B}_{R_{I_{14}}}(0)$ and so 
$E_{I_{14}}(\cdot-(x+he_{l}))\in\mathcal{E}\nu_{I_{214}}(\Omega_{I_{214}})$ 
by \prettyref{lem:prepare_convolution} b). 
Hence we get $\psi_{h}(x)\in\mathcal{E}\nu_{I_{214}}(\Omega_{I_{214}})$. 
The motivation for the definition of $\psi_{h}(x)$ comes from
\begin{flalign*}
&\hspace{0.37cm} \frac{(w \ast_{2} \check{E}_{I_{14}})(x+he_{l})-(w \ast_{2} \check{E}_{I_{14}})(x)}{h}\\
&=\bigl\langle w,\frac{E_{I_{14}}(\cdot-(x+he_{l}))-E_{I_{14}}(\cdot-x)}{h}_{\big{|}\Omega_{n}}\bigr\rangle
=\langle w,\psi_{h}(x)_{\mid\Omega_{n}}\rangle.
\end{flalign*}
So, if we show, that $\psi_{h}(x)$ converges to $\partial_{x_{l}}[E_{I_{14}}(\cdot-x)]$ in
$\mathcal{E}\nu_{_{I_{214}}}(\Omega_{I_{214}})$ as $h$ tends to $0$, 
we get, keeping $|\cdot|_{n,m}\leq|\cdot|_{I_{214},m}$ in mind,
\[
 \partial_{x_{l}}(w \ast_{2} \check{E}_{I_{14}})(x)
=\langle w, \partial_{x_{l}}[E_{I_{14}}(\cdot-x)]_{\mid\Omega_{n}}\rangle.
\]
Then the general statement follows by induction over the order $|\alpha|$.

Let $y\in \Omega_{I_{214}}$ and $\beta\in\N^{2}_{0}$. Since $|y-x|\geq \d_{X,I_{214}}>0$,
we get $0\notin \mathbb{B}_{\d_{X,I_{214}}}(y-x)$. 
Moreover, $R_{I_{14}}<\d_{X,I_{214}}$ by \prettyref{cond:dense} a)(ii) and so
\[
 |y-(x+he_{l})-(y-x)|=|h|<R_{I_{14}}<\d_{X,I_{214}}.
\]
Thus $y-(x+he_{l})\in \overline{\mathbb{B}_{|h|}(y-x)}\subset \mathbb{B}_{R_{I_{14}}}(y-x)$ 
and $0\notin\overline{\mathbb{B}_{|h|}(y-x)}$.
We write $E_{I_{14}}=(E_{I_{14},1},E_{I_{14},2})$ as a tuple of its coordinate functions. 
By the mean value theorem there exist $\zeta_{i}\in[y-(x+he_{l}),y-x]\subset\overline{\mathbb{B}_{|h|}(y-x)}$, $i=1,2$, 
where $[y-(x+he_{l}),y-x]$ denotes the line segment from $y-(x+he_{l})$ to $y-x$, such that
\begin{align*}
\partial^{\beta}_{y}\psi_{h}(x)[y]
&=\frac{(\partial^{\beta}E_{I_{14}})(y-(x+he_{l}))-(\partial^{\beta}E_{I_{14}})(y-x)}{h}\\
&=\frac{1}{h}
 \begin{pmatrix}
 \langle \nabla(\partial^{\beta}E_{I_{14},1})(\zeta_{1})|-he_{l}\rangle\\
 \langle \nabla(\partial^{\beta}E_{I_{14},2})(\zeta_{2})|-he_{l}\rangle
 \end{pmatrix}
=-\begin{pmatrix}
 \partial_{l}\partial^{\beta}E_{I_{14},1}(\zeta_{1})\\
 \partial_{l}\partial^{\beta}E_{I_{14},2}(\zeta_{2})
 \end{pmatrix},
\end{align*}
where $\nabla$ denotes the gradient, as well as $\zeta_{ii}\in[\zeta_{i},y-x]\subset\overline{\mathbb{B}_{|h|}(y-x)}$, $i=1,2$, such that
\begin{flalign}\label{lem4.2}
&\hspace{0.37cm}\partial^{\beta}_{y}\psi_{h}(x)[y]-\partial^{\beta}_{y}\partial_{x_{l}}[E_{I_{14}}(y-x)]\nonumber\\
&=-\begin{pmatrix}
 \partial_{l}\partial^{\beta}E_{I_{14},1}(\zeta_{1})\\
 \partial_{l}\partial^{\beta}E_{I_{14},2}(\zeta_{2})
 \end{pmatrix}
 -\partial^{\beta}(-\partial_{l}E_{I_{14}})(y-x)\nonumber\\
&=\begin{pmatrix}
 \langle \nabla(\partial_{l}\partial^{\beta}E_{I_{14},1})(\zeta_{11})|y-x-\zeta_{1}\rangle\\
 \langle \nabla(\partial_{l}\partial^{\beta}E_{I_{14},2})(\zeta_{22})|y-x-\zeta_{2}\rangle
 \end{pmatrix}.
\end{flalign}
Then
\begin{flalign}\label{lem4.3}
&\hspace{0.3cm}\left|\begin{pmatrix}
 \langle \nabla(\partial_{l}\partial^{\beta}E_{I_{14},1})(\zeta_{11})|y-x-\zeta_{1}\rangle\\
 \langle \nabla(\partial_{l}\partial^{\beta}E_{I_{14},2})(\zeta_{22})|y-x-\zeta_{2}\rangle
 \end{pmatrix}\right|\nonumber\\
&\leq|\langle \nabla(\partial_{l}\partial^{\beta}E_{I_{14},1})(\zeta_{11})|y-x-\zeta_{1}\rangle|
 +|\langle \nabla(\partial_{l}\partial^{\beta}E_{I_{14},2})(\zeta_{22})|y-x-\zeta_{2}\rangle|\nonumber\\
&\leq |\nabla(\partial_{l}\partial^{\beta}E_{I_{14},1})(\zeta_{11})| |y-x-\zeta_{1}|
 +|\nabla(\partial_{l}\partial^{\beta}E_{I_{14},2})(\zeta_{22})| |y-x-\zeta_{2}|\nonumber\\
&\leq \phantom{+}(|\partial_{1}\partial_{l}\partial^{\beta}E_{I_{14},1}(\zeta_{11})|
 +|\partial_{2}\partial_{l}\partial^{\beta}E_{I_{14},1}(\zeta_{11})|\nonumber\\
&\phantom{\leq}+|\partial_{1}\partial_{l}\partial^{\beta}E_{I_{14},2}(\zeta_{22})|
 +|\partial_{2}\partial_{l}\partial^{\beta}E_{I_{14},2}(\zeta_{22})|) 
  |h|\nonumber\\
&\leq \phantom{+}(|\partial_{1}\partial_{l}\partial^{\beta}E_{I_{14}}(\zeta_{11})|
 +|\partial_{2}\partial_{l}\partial^{\beta}E_{I_{14}}(\zeta_{11})|\nonumber\\
&\phantom{\leq}+|\partial_{1}\partial_{l}\partial^{\beta}E_{I_{14}}(\zeta_{22})|
 +|\partial_{2}\partial_{l}\partial^{\beta}E_{I_{14}}(\zeta_{22})|)
  |h|\nonumber\\
&\underset{\mathclap{\eqref{lem1}}}{=}2(|E_{I_{14}}^{(|\beta|+2)}(\zeta_{11})|
 +|E_{I_{14}}^{(|\beta|+2)}(\zeta_{22})|)|h|
\end{flalign}
is valid. 
By the choice $R_{I_{14}}<r_{I_{14}}<\d_{X,I_{214}}-R_{I_{14}}$ from $(WR.1b)$ we get 
due to Cauchy's integral formula
\begin{flalign}\label{lem4.4}
&\hspace{0.37cm} |E_{I_{14}}^{(|\beta|+2)}(\zeta_{ii})|\nonumber\\
&=\frac{(|\beta|+2)!}{2\pi }\bigl|\int_{\partial \mathbb{B}_{r_{I_{14}}}(y-x)}
 {\frac{E_{I_{14}}(\zeta)}{(\zeta-\zeta_{ii})^{|\beta|+3}}\d\zeta}\bigr|\nonumber\\
&\leq \frac{r_{I_{14}}(|\beta|+2)!}{(r_{I_{14}}-R_{I_{14}})^{|\beta|+3}}
 \max_{|\zeta-(y-x)|=r_{I_{14}}}{|\frac{g_{I_{14}}(\zeta)}{\pi \zeta}|}\nonumber\\
&\leq \underbrace{\frac{r_{I_{14}}(|\beta|+2)!}{\pi(r_{I_{14}}-R_{I_{14}})^{|\beta|+3}(\d_{X,I_{214}}-r_{I_{14}})}}_{=:C(n,|\beta|)}
 \max_{|\zeta-(y-x)|=r_{I_{14}}}{|g_{I_{14}}(\zeta)|}.
\end{flalign}
Hence by combining \eqref{lem4.2}, \eqref{lem4.3} and \eqref{lem4.4}, we have for $m\in\N_{0}$
\[
\hspace{0.65cm}|\psi_{h}(x)-\partial_{x_{l}}[E_{I_{14}}(\cdot-x)]|_{I_{214},m}
\underset{\eqref{pro.2}}{\leq} 4 \sup_{\beta\in\N^{2}_{0},|\beta|\leq m}C(n,|\beta|)A_{2}(x,I_{14})|h|\underset{h\to 0}{\to} 0.
\]
This means that $\psi_{h}(x)$ converges to $\partial_{x_{l}}[E_{I_{14}}(\cdot-x)]$ 
in $\mathcal{E}\nu_{I_{214}}(\Omega_{I_{214}})$ and so with respect 
to $(|\cdot|_{n,m})_{m\in\N_{0}}$ as well since $|\cdot|_{n,m}\leq|\cdot|_{I_{214},m}$.

$c)(i)$ For $h>0$ small enough we define
\[
S_{h}(\psi)\colon \Omega_{I_{14}}\to\R^2,\;
S_{h}(\psi)(y):=\sum_{m\in\Z^2}{E_{I_{14}}(y-mh)\psi(mh)h^2},
\]
where $E_{I_{14}}(0)\psi(mh)=E_{I_{14}}(0)0:=0$ if $mh\in\Omega_{I_{14}}$. 
The first part of the proof is to show that $S_{h}(\psi)$ converges to 
$T_{E_{I_{14}}}\ast\psi$ in $\mathcal{E}\nu_{I_{14}}(\Omega_{I_{14}})$ as $h$ tends to $0$.

Set $Q_{m}:=mh+[0,h]^{2}$ and let $N\subset X_{I_{214}}$ be compact.
Now, we define $M_{N,h}:=\{m\in\Z^{2}\; | \; Q_{m}\cap N\neq \varnothing\}$. Due to this definition we have
\begin{equation}\label{lem4.6}
\{m\in\Z^{2}\; | \; mh\in N\}\subset M_{N,h}
\end{equation}
and
\begin{equation}\label{lem4.8}
|M_{N,h}| 
\leq \left\lceil \frac{\operatorname{diam}(N)}{h}\right\rceil^{2}
\leq \bigl(\frac{\operatorname{diam}(N)}{h}+1\bigr)^{2}
\end{equation}
where $|M_{N,h}|$ denotes the cardinality of $M_{N,h}$, $\lceil x\rceil$ the ceiling of $x$ and 
$\operatorname{diam}(N)$ the diameter of $N$ w.r.t.\ $|\cdot|$. 
Let $0<h<\tfrac{1}{2\sqrt{2}}\d^{|\cdot|}(N, \partial X_{I_{214}})$. Then
\begin{equation}\label{lem4.9.0}
Q_{m}\subset (N+\overline{\mathbb{B}_{\frac{1}{2}\d^{|\cdot|}(N,\partial X_{I_{214}})}(0)})=:K\subset X_{I_{214}},
\quad m\in M_{N,h},
\end{equation}
as $\sqrt{2}h$ is the length of the diagonal of any cube $Q_{m}$.
Therefore we obtain for $y\in \Omega_{I_{14}} \subset\Omega_{I_{214}}$, $x\in Q_{m}$, $m\in M_{N,h}$ and 
$\beta\in\N^{2}_{0}$ analogously to the proof of \prettyref{lem:prepare_convolution} b) 
with the choice of $r_{I_{14}}$ from $(WR.1b)$
\begin{align}\label{lem4.10}
 |\partial^{\beta}_{y}[E_{I_{14}}(y-x)]|
&\leq\frac{|\beta|!}{\pi r_{I_{14}}^{|\beta|}}
 \max_{|\zeta-(y-x)|=r_{I_{14}}}{\frac{|g_{I_{14}}(\zeta)|}{|\zeta|}}\nonumber\\
&\leq\underbrace{\frac{|\beta|!}{\pi r_{I_{14}}^{|\beta|}(\d_{X,I_{214}}-r_{I_{14}})}}_{=:C_{1}(|\beta|,n)}
 \max_{|\zeta-(y-x)|=r_{I_{14}}}{|g_{I_{14}}(\zeta)|}\nonumber\\
&\underset{\mathclap{\eqref{pro.2}}}{\leq}C_{1}(|\beta|,n)\frac{A_{2}(x,I_{14})}{\nu_{I_{214}}(y)}.
\end{align}
Due to $(WR.1b)$ and \eqref{lem4.9.0} there is $C_{0}>0$, independent of $h$, 
such that for every $m\in M_{N,h}$ 
\begin{equation}\label{lem4.9.1}
  A_{2}(x,I_{14})\leq \sup_{z\in K}A_{2}(z,I_{14})\leq C_{0},\quad  x\in Q_{m}.
\end{equation}
Let $\psi\in\mathcal{C}^{\infty}_{c}(N)$ and $m_{0}\in\N_{0}$. Then we have
\begin{align*}
 |\partial^{\beta}_{y}S_{h}(\psi)(y)|\;&\underset{\mathclap{\eqref{lem4.6}}}{=}\;
 \bigl|\sum_{m\in M_{N,h}}{\partial^{\beta}_{y}[E_{I_{14}}(y-mh)]\psi(mh)h^2}\bigr|\\
&\underset{\mathclap{\eqref{lem4.10},\, mh\in Q_{m}}}{\leq} h^2C_{1}(|\beta|,n)\frac{1}{\nu_{I_{214}}(y)}
 \sum_{m\in M_{N,h}}A_{2}(mh,I_{14})|\psi(mh)|\\
&\underset{\mathclap{\eqref{lem4.8},\eqref{lem4.9.1}}}{\leq}C_{0}C_{1}(|\beta|,n)h^2
 \bigl(\frac{\operatorname{diam}(N)}{h}+1\bigr)^{2}\frac{1}{\nu_{I_{214}}(y)}\|\psi\|_{0}\\
&=C_{0}C_{1}(|\beta|,n)(\operatorname{diam}(N)+h)^{2}\frac{1}{\nu_{I_{214}}(y)}\|\psi\|_{0}
\end{align*}
and therefore
\begin{flalign*}
&\hspace{0.37cm} |S_{h}(\psi)|_{I_{14},m_{0}}\\
&\leq C_{0}\sup_{\beta\in\N^{2}_{0},|\beta|\leq m_{0}}{C_{1}(|\beta|,n)}(\operatorname{diam}(N)+h)^{2}
 \sup_{y\in \Omega_{I_{14}}}{\frac{\nu_{I_{14}}(y)}{\nu_{I_{214}}(y)}}\|\psi\|_{0}\\
&\leq C_{0}\sup_{\beta\in\N^{2}_{0},|\beta|\leq m_{0}}{C_{1}(|\beta|,n)}(\operatorname{diam}(N)+h)^{2}\|\psi\|_{0}
\end{flalign*}
bringing forth $S_{h}\left(\psi\right)\in\mathcal{E}\nu_{I_{14}}(\Omega_{I_{14}})$.
Further, the following equations hold
\begin{flalign}\label{lem4.12}
&\hspace{0.37cm}|\partial^{\beta}(S_{h}(\psi)-T_{E_{I_{14}}}\ast\psi)(y)|\nonumber\\
&=\bigl|\sum_{m\in M_{N,h}}\underbrace{{\partial^{\beta}_{y}[E_{I_{14}}(y-mh)]\psi(mh)h^2}}_{
   =\int_{Q_{m}}{\partial^{\beta}_{y}[E_{I_{14}}(y-mh)]\psi(mh)\d x}}
  -\underbrace{\int_{\R^2}{\partial^{\beta}_{y}[E_{I_{14}}(y-x)]\psi(x)\d x}}_{
   =\sum_{m\in M_{N,h}}{\int_{Q_{m}}{\partial^{\beta}_{y}[E_{I_{14}}(y-x)]\psi(x)\d x}}}\bigl|\nonumber\\
&=\bigl| \sum_{m\in M_{N,h}}\,{\int_{Q_{m}}{(\partial^{\beta}E_{I_{14}})(y-mh)\psi(mh)
  -(\partial^{\beta}E_{I_{14}})(y-x)\psi(x)\d x}}\bigr|\nonumber\\
&=\bigl|\sum_{m\in M_{N,h}}\,{\int_{Q_{m}}{[(\partial^{\beta}E_{I_{14}})(y-mh)-(\partial^{\beta}E_{I_{14}})(y-x)]\psi(mh)}}\nonumber\\
& \phantom{\bigl|\sum_{m\in M_{N,h}}\,{\int_{Q_{m}}}}\;+{{[\psi(mh)-\psi(x)](\partial^{\beta}E_{I_{14}})(y-x)\d x}}\bigr| .
\end{flalign}
The next steps are similar to the proof of $b)$. By the mean value theorem there exist 
$x_{0,i},x_{1,i}\in[x,mh]\subset Q_{m}$, $i=1,2$, such that for $\psi=(\psi_{1},\psi_{2})$
\begin{align}\label{lem4.13}
 |\psi(mh)-\psi(x)|
&=\left|
 \begin{pmatrix}
 \langle \nabla(\psi_{1})(x_{0,1})|mh-x\rangle\\
 \langle \nabla(\psi_{2})(x_{0,2})|mh-x\rangle
 \end{pmatrix}\right|
 \leq 4\|\psi\|_{1}|mh-x|\nonumber\\
&\leq 4\sqrt{2}h\|\psi\|_{1}
\end{align}
and
\begin{flalign}\label{lem4.14}
&\hspace{0.37cm}|(\partial^{\beta}E_{I_{14}})(y-mh)-(\partial^{\beta}E_{I_{14}})(y-x)|\nonumber\\
&=\left|-
 \begin{pmatrix}
 \langle \nabla(\partial^{\beta}E_{I_{14},1})(y-x_{1,1})|mh-x\rangle\\
 \langle \nabla(\partial^{\beta}E_{I_{14},2})(y-x_{1,2})|mh-x\rangle
 \end{pmatrix}\right|\nonumber\\
&\leq 2(E_{I_{14}}^{(|\beta|+1)}(y-x_{1,1})+E_{I_{14}}^{(|\beta|+1)}(y-x_{1,2}))|mh-x|\nonumber\\
&\underset{\mathclap{\eqref{lem4.10}}}{\leq}4\sqrt{2}hC_{1}(|\beta|+1,n)\frac{1}{\nu_{I_{214}}(y)}
 (A_{2}(x_{1,1},I_{14})+A_{2}(x_{1,2},I_{14}))\nonumber\\
&\underset{\mathclap{\eqref{lem4.9.1}}}{\leq} 4\sqrt{2}hC_{1}(|\beta|+1,n)\frac{2C_{0}}{\nu_{I_{214}}(y)}
\end{flalign}
analogously to \eqref{lem4.3}. Thus by combining \eqref{lem4.12}, \eqref{lem4.13} and \eqref{lem4.14}, we obtain
\begin{flalign*}
&\hspace{0.37cm}|\partial^{\beta}(S_{h}(\psi)-E_{I_{14}}\ast\psi)(y)|\\
&\leq\sum_{m\in M_{N,h}}\,\int_{Q_{m}}4\sqrt{2}h\bigl(C_{1}(|\beta|+1,n)\frac{2C_{0}}{\nu_{I_{214}}(y)}|\psi(mh)|\\
&\phantom{\leq\sum_{m\in M_{N,h}\,}\int_{Q_{m}}4\sqrt{2}h}\; +\|\psi\|_{1}|(\partial^{\beta}E_{I_{14}})(y-x)|\bigr)\d x\\
&\underset{\mathclap{\eqref{lem4.10},\eqref{lem4.9.1}}}{\leq}\quad\, \sum_{m\in M_{N,h}}{8\sqrt{2}C_{0}C_{2}(|\beta|,n)
 h\frac{1}{\nu_{I_{214}}(y)}(\|\psi\|_{0}+\|\psi\|_{1})\lambda(Q_{m})} \\
&\underset{\mathclap{\eqref{lem4.8}}}{\leq}8\sqrt{2}h^{3}\bigl(\frac{\operatorname{diam}(N)}{h}+1\bigr)^{2}
 C_{0}C_{2}(|\beta|,n)\frac{1}{\nu_{I_{214}}(y)}(\|\psi\|_{0}+\|\psi\|_{1})\\
&\leq 16\sqrt{2}(\operatorname{diam}(N)+h)^{2}hC_{0}C_{2}(|\beta|,n)\frac{1}{\nu_{I_{214}}(y)}\|\psi\|_{1}
\end{flalign*}
with $C_{2}(|\beta|,n):=\max\{C_{1}(|\beta|+1,n),C_{1}(|\beta|,n)\}$ and so for $m_{0}\in\N_{0}$
\begin{flalign*}
&\hspace{0.37cm}|S_{h}(\psi)-T_{E_{I_{14}}}\ast\psi|_{I_{14},m_{0}}\\
&\leq 16\sqrt{2}(\operatorname{diam}(N)+h)^{2}hC_{0}\sup_{\beta\in\N^{2}_{0},|\beta|\leq m_{0}}{C_{2}(|\beta|,n)}
 \sup_{y\in \Omega_{I_{14}}}{\frac{\nu_{I_{14}}(y)}{\nu_{I_{214}}(y)}}\|\psi\|_{1}\\
&\leq 16\sqrt{2}C_{0}\sup_{\beta\in\N^{2}_{0},|\beta|\leq m_{0}}
 {C_{2}(|\beta|,n)}\|\psi\|_{1}(\operatorname{diam}(N)+h)^{2}h
\underset{h\to 0}{\to} 0,
\end{flalign*}
proving the convergence of $S_{h}(\psi)$ to $T_{E_{I_{14}}}\ast\psi$ in $\mathcal{E}\nu_{I_{14}}(\Omega_{I_{14}})$ 
and hence with respect to $(|\cdot|_{n,m_{0}})_{m_{0}\in\N_{0}}$ as well.

$(ii)$ The next part of the proof is to show that
\[
 \lim_{h\to 0}\sum_{m\in M_{N,h}} (w\ast_{2}\check{E}_{I_{14}})(mh)\psi(mh)h^{2}
=\int_{\R^{2}}{(w\ast_{2}\check{E}_{I_{14}})(x)\psi(x)\d x}.
\]
Let $0<h<\tfrac{1}{2\sqrt{2}}\d^{|\cdot|}(N, \partial X_{I_{214}})$. We begin with
\begin{flalign}\label{lem4.16}
&\hspace{0.37cm}\bigl|\sum_{m\in M_{N,h}} (w\ast_{2}\check{E}_{I_{14}})(mh)\psi(mh)h^{2}
 -\int_{\R^2}{(w\ast_{2}\check{E}_{I_{14}})(x)\psi(x)\d x}\bigr|\nonumber\\
&=\bigl|\sum_{m\in M_{N,h}}\,\int_{Q_{m}}{ (w\ast_{2}\check{E}_{I_{14}})(mh)\psi(mh)
 -(w\ast_{2}\check{E}_{I_{14}})(x)\psi(x)\d x}\bigr|\nonumber\\
&=\bigl|\sum_{m\in M_{N,h}}\,{\int_{Q_{m}}
 {[(w\ast_{2}\check{E}_{I_{14}})(mh)-(w\ast_{2}\check{E}_{I_{14}})(x) ]\psi(mh)}}\nonumber\\
&\phantom{\sum_{m\in M_{N,h}}\,\int_{Q_{m}}}\;\;+{{[\psi(mh)-\psi(x)](w\ast_{2}\check{E}_{I_{14}})(x)\d x}}\bigr|.
\end{flalign}
Again, by the mean value theorem there exist $x_{0,i},\;x_{1,i}\in[x,mh]\subset Q_{m}$, $i=1,2$, for 
$x\in Q_{m}=mh+[0,h]^{2}$ such that 
\begin{equation}\label{lem4.17}
 |\psi(mh)-\psi(x)|
=\left|
 \begin{pmatrix}
 \langle \nabla(\psi_{1})(x_{0,1})|mh-x\rangle\\
 \langle \nabla(\psi_{2})(x_{0,2})|mh-x\rangle
 \end{pmatrix}\right|
\leq 4\sqrt{2}h\|\psi\|_{1}
\end{equation}
and for $w\ast_{2}\check{E}_{I_{14}}=((w\ast_{2}\check{E}_{I_{14}})_{1},(w\ast_{2}\check{E}_{I_{14}})_{2})$, 
taking account of \eqref{lem4.9.0} and part b), 
\begin{flalign}\label{lem4.18}
&\hspace{0.37cm}|(w\ast_{2}\check{E}_{I_{14}})(mh)-(w\ast_{2}\check{E}_{I_{14}})(x)|\nonumber\\
&=\left|
 \begin{pmatrix}
 \langle \nabla((w\ast_{2}\check{E}_{I_{14}})_{1})(x_{1,1})|mh-x\rangle\\
 \langle \nabla((w\ast_{2}\check{E}_{I_{14}})_{2})(x_{1,2})|mh-x\rangle
 \end{pmatrix}\right|\nonumber\\
&\leq(|\nabla((w\ast_{2}\check{E}_{I_{14}})_{1})(x_{1,1})|
 +|\nabla((w\ast_{2}\check{E}_{I_{14}})_{2})(x_{1,2})|)\sqrt{2}h\nonumber\\
&\leq \underbrace{(\|\nabla((w\ast_{2}\check{E}_{I_{14}})_{1})\|_{K}
 +\|\nabla((w\ast_{2}\check{E}_{I_{14}})_{2})\|_{K})}_{=:C_{3}<\infty}\sqrt{2}h
\end{flalign}
where we used $x_{1,i}\in Q_{m}$, $m\in M_{N,h}$, in the last inequality. 
Due to \eqref{lem4.16}, \eqref{lem4.17} and \eqref{lem4.18} we gain
\begin{flalign*}
&\hspace{0.37cm}\bigl|\sum_{m\in M_{N,h}}(w\ast_{2}\check{E}_{I_{14}})(mh)\psi(mh)h^{2}
 -\int_{\R^2}{(w\ast_{2}\check{E}_{I_{14}})(x)\psi(x)\d x}\bigr|\nonumber\\
&\leq \sum_{m\in M_{N,h}}(C_{3}\sqrt{2}h\|\psi\|_{0}+4\sqrt{2}h\|\psi\|_{1}\|w\ast_{2}\check{E}_{I_{14}}\|_{K})h^{2}\\
&\underset{\mathclap{\eqref{lem4.8}}}{\leq}(C_{3}\sqrt{2}\|\psi\|_{0}+4\sqrt{2}\|\psi\|_{1}\|
 w\ast_{2}\check{E}_{I_{14}}\|_{K})(\operatorname{diam}(N)+h)^{2}h
\underset{h\to 0}{\to} 0.
\end{flalign*}

$(iii)$ Merging $(i)$ and $(ii)$, we get for $\psi\in\mathcal{C}^{\infty}_{c}(N)$
\begin{align*}
 \langle w \ast_{1} T_{\check{E}_{I_{14}}}, \psi\rangle
&=\langle w , (T_{E_{I_{14}}}\ast\psi)_{\mid\Omega_{n}}\rangle
 \underset{(i)}{=}\lim_{h\to 0}\langle w , S_{h}(\psi)_{\mid\Omega_{n}}\rangle\\
&=\lim_{h\to 0}\langle w , \sum_{m\in M_{N,h}}{E_{I_{14}}(\cdot-mh)_{\mid\Omega_{n}}\psi(mh)h^2}\rangle\\
&\underset{\mathclap{\eqref{lem4.8}}}{=}\; \lim_{h\to 0}\sum_{m\in M_{N,h}}
 \underbrace{\langle w ,E_{I_{14}}(\cdot-mh)_{\mid\Omega_{n}}\rangle}_{
  =(w\ast_{2}\check{E}_{I_{14}})(mh)}\psi(mh)h^2\\
&\underset{\mathclap{(ii)}}{=}\;\int_{\R^2}{(w\ast_{2}\check{E}_{I_{14}})(x)\psi(x)\d x}
=\langle T_{w\ast_{2}\check{E}_{I_{14}}},\psi\rangle.
\end{align*}

$d)$ $w\ast_{\varphi} T_{\check{E}_{I_{14}}}$ is defined by \prettyref{lem:prepare_convolution} d). 
Because $w$ is continuous, there exist $C_{4}>0$ and $m\in\N_{0}$ such that
\begin{align*}
 |\langle w\ast_{\varphi} T_{\check{E}_{I_{14}}},f \rangle|
&=|\langle w, [T_{E_{I_{14}}}\ast(\varphi f)]_{\mid\Omega_{n}}\rangle|
\leq C_{4}|T_{E_{I_{14}}}\ast(\varphi f)|_{n,m}\\
&\underset{\mathclap{\eqref{lem3.3},\, k=I_{4},\,p=n}}{\leq}\;C_{4}A_{5}|f|_{I_{4},m}. 
\end{align*}
\end{proof}

\begin{lem}\label{lem:dense_component}
Let $n\in\N$, $w\in (\pi_{I_{14},n}(\mathcal{E}\nu_{I_{14}}(\Omega_{I_{14}})), (|\cdot|_{n,m})_{m\in\N_{0}})'$ 
and $(WR)$ be fulfilled.
If $w_{\mid\pi_{I_{214},n}(\mathcal{E}\nu_{I_{214},\overline{\partial}}(\Omega_{I_{214}}))}=0$, 
then $\operatorname{supp}(w \ast_{1} T_{\check{E}_{I_{14}}})\subset\overline{\Omega}_{n}$ 
where the support is meant in the distributional sense.
\end{lem}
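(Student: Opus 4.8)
The plan is to put $T:=w\ast_{1}T_{\check{E}_{I_{14}}}\in\mathcal{D}'(\R^{2})$, which is well-defined by \prettyref{lem:convolution} a), and to show that $T$ vanishes on the open set $\overline{\Omega}_{n}^{C}$. First I would check that $\overline{\partial}T=0$ on $\overline{\Omega}_{n}^{C}$. For $\psi\in\mathcal{D}(\R^{2})$ the calculus of distributional convolution together with $\overline{\partial}T_{E_{I_{14}}}=\delta$ from \prettyref{lem:prepare_convolution} a) gives $T_{E_{I_{14}}}\ast\overline{\partial}\psi=(\overline{\partial}T_{E_{I_{14}}})\ast\psi=\delta\ast\psi=\psi$, hence by the definition of $\ast_{1}$
\begin{align*}
\langle\overline{\partial}T,\psi\rangle
&=-\langle T,\overline{\partial}\psi\rangle
=-\langle w,(T_{E_{I_{14}}}\ast\overline{\partial}\psi)_{\mid\Omega_{n}}\rangle
=-\langle w,\psi_{\mid\Omega_{n}}\rangle,
\end{align*}
where $\psi_{\mid\Omega_{n}}=\pi_{I_{14},n}(\psi_{\mid\Omega_{I_{14}}})$ and $\psi_{\mid\Omega_{I_{14}}}\in\mathcal{E}\nu_{I_{14}}(\Omega_{I_{14}})$, so the right-hand side lies in the domain of $w$. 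If $\operatorname{supp}\psi\subset\overline{\Omega}_{n}^{C}$, then $\psi$ vanishes on $\Omega_{n}$ and the right-hand side is $0$; thus $\overline{\partial}T=0$ on $\overline{\Omega}_{n}^{C}$. Since $\overline{\partial}$ is elliptic, hence hypoelliptic, $T$ is represented on the open set $\overline{\Omega}_{n}^{C}$ by a holomorphic function $F\in\mathcal{O}(\overline{\Omega}_{n}^{C})$.

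Next I would use the hypothesis on $w$ to obtain $T=0$ on $X_{I_{214}(n)}$. Applying \prettyref{lem:prepare_convolution} b) with $I_{14}(n)$ in place of $n$, for $x\in X_{I_{214}}$ the restriction $E_{I_{14}}(\cdot-x)_{\mid\Omega_{I_{214}}}$ belongs to $\mathcal{E}\nu_{I_{214},\overline{\partial}}(\Omega_{I_{214}})$, so $E_{I_{14}}(\cdot-x)_{\mid\Omega_{n}}=\pi_{I_{214},n}(E_{I_{14}}(\cdot-x)_{\mid\Omega_{I_{214}}})$ lies in $\pi_{I_{214},n}(\mathcal{E}\nu_{I_{214},\overline{\partial}}(\Omega_{I_{214}}))$. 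By assumption $w$ annihilates this space, so $(w\ast_{2}\check{E}_{I_{14}})(x)=\langle w,E_{I_{14}}(\cdot-x)_{\mid\Omega_{n}}\rangle=0$ for every $x\in X_{I_{214}}$. By \prettyref{lem:convolution} c) the distribution $T$ coincides with the regular distribution $T_{w\ast_{2}\check{E}_{I_{14}}}$ on $X_{I_{214}}$, whence $T=0$ there. Since $X_{I_{214}}\subset\R^{2}\setminus\overline{\Omega}_{I_{214}}\subset\overline{\Omega}_{n}^{C}$, the holomorphic representative satisfies $F=0$ on the open set $X_{I_{214}}$.

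It remains to propagate this vanishing to all of $\overline{\Omega}_{n}^{C}$ by analytic continuation. I would apply \prettyref{cond:dense} c) with the closed set $M:=\overline{\Omega}_{n}$ (trivially a closed subset of $\overline{\Omega}_{n}$): every component $N$ of $M^{C}=\overline{\Omega}_{n}^{C}$ satisfies $N\cap\overline{\Omega}_{n}^{C}=N\neq\varnothing$ and therefore $N\cap X_{I_{214}}\neq\varnothing$. As $F$ is holomorphic on the connected open set $N$ and vanishes on the nonempty open subset $N\cap X_{I_{214}}$, the identity theorem forces $F\equiv 0$ on $N$. Letting $N$ run through all components of $\overline{\Omega}_{n}^{C}$ yields $F=0$, i.e.\ $T=0$ on $\overline{\Omega}_{n}^{C}$, which is precisely $\operatorname{supp}(w\ast_{1}T_{\check{E}_{I_{14}}})\subset\overline{\Omega}_{n}$.

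I expect the main obstacle to be this last step. The hypothesis on $w$ only delivers vanishing on the potentially small open set $X_{I_{214}}$, and transporting it across the whole complement $\overline{\Omega}_{n}^{C}$ relies on two ingredients working in tandem: the ellipticity of $\overline{\partial}$, which upgrades the a priori distributional vanishing into a genuine holomorphic representative $F$, and the geometric hypothesis \prettyref{cond:dense} c), which is exactly what guarantees that every connected component of $\overline{\Omega}_{n}^{C}$ reaches $X_{I_{214}}$ so that the identity theorem applies. Some care is also needed in the first step to interpret $\langle w,\psi_{\mid\Omega_{n}}\rangle$ and to ensure that all occurring restrictions genuinely lie in the domain of $w$.
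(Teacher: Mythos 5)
Your proof is correct, and its skeleton is the same as the paper's: show $\overline{\partial}(w\ast_{1}T_{\check{E}_{I_{14}}})$ vanishes off $\overline{\Omega}_{n}$, use ellipticity of $\overline{\partial}$ to get a holomorphic representative there, deduce vanishing on $X_{I_{214}}$ from the hypothesis on $w$ via $\ast_{2}$ and the consistency statement \prettyref{lem:convolution} c), and propagate with \prettyref{cond:dense} c) and the identity theorem. You do, however, streamline the execution in two genuine ways. First, the paper packages the pairing $\psi\mapsto\langle w,\psi_{\mid\Omega_{n}}\rangle$ into an auxiliary distribution $w_{0}\in\mathcal{D}'(\R^{2})$ with $\operatorname{supp}w_{0}\subset\overline{\Omega}_{n}$, obtains the holomorphic function on the possibly larger set $(\operatorname{supp}w_{0})^{C}$, and applies \prettyref{cond:dense} c) with $M:=\operatorname{supp}w_{0}$; you work directly on $\overline{\Omega}_{n}^{C}$ and take $M:=\overline{\Omega}_{n}$, which is legitimate because the condition quantifies over \emph{all} closed subsets of $\overline{\Omega}_{n}$, and it spares you the estimate showing $w_{0}\in\mathcal{D}'(\R^{2})$. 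Second, where the paper proves that \emph{all} complex derivatives of the representative vanish at each point of $X_{I_{214}}$ (which needs the differentiation formula \eqref{lem4.1} from \prettyref{lem:convolution} b)), you only use vanishing of the function $w\ast_{2}\check{E}_{I_{14}}$ on the open set $X_{I_{214}}$, which already suffices for the identity theorem since $N\cap X_{I_{214}}$ is a nonempty open subset of each component $N$; this dispenses with \prettyref{lem:convolution} b) except as the definition of $\ast_{2}$. The one point you flag but do not write out --- that restrictions of compactly supported smooth functions lie in $\mathcal{E}\nu_{I_{14}}(\Omega_{I_{14}})$, hence in the domain of $w$ --- is exactly part (i) of the paper's proof, and for your restricted class of test functions (those supported in $\overline{\Omega}_{n}^{C}$, where $\psi_{\mid\Omega_{n}}=0$) it is not even needed, so the omission is harmless.
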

\begin{proof}
$(i)$ Let $\psi\in\mathcal{C}^{\infty}_{c}(N)$ where $N\subset \R^{2}$ is compact. 
The set $K:=\overline{\Omega}_{I_{14}}\cap N$ is compactly contained in $\Omega$ and we have for $m\in\N_{0}$
\begin{align*}
 |\psi|_{I_{14},m}
&=\sup_{\substack{x\in \Omega_{I_{14}}\\ \beta\in\N^{2}_{0}, |\beta|\leq m}}
 |\partial^{\beta}\psi(x)|\nu_{I_{14}}(x)
 \leq \|\nu_{I_{14}}\|_{K}\sup_{\substack{x\in \R^{2}\\ \beta\in\N^{2}_{0},|\beta|\leq m}}|\partial^{\beta}\psi(x)|\\
&=\|\nu_{I_{14}}\|_{K}\|\psi\|_{m}<\infty,
\end{align*}
hence $\psi_{\mid\Omega_{I_{14}}}\in\mathcal{E}\nu_{I_{14}}(\Omega_{I_{14}})$. Now, we define
\[
w_{0}\colon \mathcal{D}(\R^{2})\to\mathcal{D}(\R^{2}),\;
w_{0}(\psi):=w(\pi_{I_{14},n}(\psi_{\mid\Omega_{I_{14}}}))=w(\psi_{\mid\Omega_{n}}).
\]
Then we obtain by the assumptions on $w$ that there exist $m\in\N_{0}$ and $C>0$ such that
\[
 |w_{0}(\psi)|
=|w(\psi_{\mid\Omega_{n}})|
\leq C |\psi|_{n,m} \leq C |\psi|_{I_{14},m}
\leq C \|\nu_{I_{14}}\|_{K}\|\psi\|_{m},
\]
for all $\psi\in\mathcal{C}^{\infty}_{c}(N)$ and therefore $w_{0}\in\mathcal{D}'(\R^{2})$ 
as well as $\operatorname{supp} w_{0}\subset \overline{\Omega}_{n}$.

$(ii)$ Let $\psi\in\mathcal{D}(\R^{2})$. Then we get
\begin{align*}
 \langle \overline{\partial}(w\ast_{1}T_{\check{E}_{I_{14}}}), \psi\rangle
&\;\underset{\mathclap{\ref{lem:convolution}\,a)}}{=}\;\langle w\ast_{1}T_{\check{E}_{I_{14}}}, -\overline{\partial}\psi\rangle
 =-\langle w, (T_{E_{I_{14}}}\ast\overline{\partial}\psi)_{\mid\Omega_{n}}\rangle\\
&\;=-\langle w, (\overline{\partial}T_{E_{I_{14}}}\ast\psi)_{\mid\Omega_{n}}\rangle
 \underset{\ref{lem:prepare_convolution}\,a)}{=}-\langle w, (\delta\ast\psi)_{\mid\Omega_{n}}\rangle\\
&\;=-\langle w, \psi_{\mid\Omega_{n}}\rangle\underset{(i)}{=}-\langle w_{0},\psi\rangle,
\end{align*}
thus $\overline{\partial}(w\ast_{1}T_{\check{E}_{I_{14}}})=-w_{0}$ and so
$\overline{\partial}(w\ast_{1}T_{\check{E}_{I_{14}}})=0$ 
on $\mathcal{D}(\R^{2}\setminus\operatorname{supp} w_{0})$ due to \cite[Theorem 2.2.1, p.\ 41]{H1}. 
Hence, by virtue of the ellipticity of the $\overline{\partial}$-operator, 
it exists $u\in\mathcal{O}(\C\setminus\operatorname{supp} w_{0})$ such that 
$T_{u}=w\ast_{1}T_{\check{E}_{I_{14}}}$ (see \cite[Theorem 11.1.1, p.\ 61]{H2}). 
By $(i)$ we have $\operatorname{supp} w_{0}\subset \overline{\Omega}_{n}$ and 
therefore we get $X_{I_{214}}\subset(\operatorname{supp} w_{0})^{C}$ and 
thus $\mathcal{D}(X_{I_{214}})\subset\mathcal{D}((\operatorname{supp} w_{0})^{C})$. 
It follows by \prettyref{lem:convolution} c) that
\[
T_{u}=w\ast_{1}T_{\check{E}_{I_{14}}}=T_{w\ast_{2}\check{E}_{I_{14}}}
\]
on $\mathcal{D}(X_{I_{214}})$, implying $u=w\ast_{2}\check{E}_{I_{14}}$ on $X_{I_{214}}$ by \prettyref{lem:convolution} b). 
This means we have for every $x\in X_{I_{214}}$ and $\alpha\in\N^{2}_{0}$
\begin{align*}
 u^{(|\alpha|)}(x)
&=(w\ast_{2}\check{E}_{I_{14}})^{(|\alpha|)}(x)
\underset{\eqref{lem1}}{=}i^{-\alpha_{2}}\partial^{\alpha}(w\ast_{2}\check{E}_{I_{14}})(x)\\
&\underset{\mathclap{\eqref{lem4.1}}}{=}i^{-\alpha_{2}}\langle w, \partial^{\alpha}_{x} [E_{I_{14}}(\cdot-x)]_{\mid\Omega_{n}}\rangle
 \underset{\ref{lem:prepare_convolution}\,b)}{=} 0
\end{align*}
by the assumptions on $w$. Hence $u=0$ in every component $N$ of $(\operatorname{supp} w_{0})^{C}$ 
with $N\cap X_{I_{214}}\neq\varnothing$ by the identity theorem. 
Denote by $N_{i}$, $i\in I$, the components of $(\operatorname{supp} w_{0})^{C}$ and let 
$I_{0}:=\{i\in I\;| \; N_{i}\cap \overline{\Omega}_n^{C}\neq\varnothing\}$. 
Due to $(WR.3)$ with $M:=\operatorname{supp} w_{0}$ we get $u=0$ on
\[
\bigcup_{i\in I_{0}}{N_{i}}\supset\bigl(\bigcup\limits_{i\in I_{0}}{N_{i}}\bigr)\cap\overline{\Omega}_{n}^{C}
=\bigl(\bigcup_{i\in I}{N_{i}}\bigr)\cap\overline{\Omega}_{n}^{C}
=(\operatorname{supp} w_{0})^{C}\cap\overline{\Omega}_{n}^{C}=\overline{\Omega}_{n}^{C}.
\]
Since $T_{u}=w\ast_{1}T_{\check{E}_{I_{14}}}$ on $\mathcal{D}((\operatorname{supp} w_{0})^{C})$, 
we conclude $\operatorname{supp}({w \ast_{1} T_{\check{E}_{I_{14}}}})\subset\overline{\Omega}_{n}$.
\end{proof}

Now, we are finally able to prove that $(WR)$ implies very weak reducibility.

\begin{proof}[\textbf{Proof of \prettyref{thm:dense_proj_lim} }] 
Set $G:=(\pi_{I_{14},n}(\mathcal{E}\nu_{I_{14},\overline{\partial}}(\Omega_{I_{14}})),(|\cdot|_{n,m})_{m\in\N_{0}})$ 
and $F:=\pi_{I_{214},n}(\mathcal{E}\nu_{I_{214},\overline{\partial}}(\Omega_{I_{214}}))\subset G$. 
Further, let $\widetilde{w}\in F^{\circ}:=\{y\in G'\;|\;\forall\;f\in F:\;y(f)=0\}$.
The space $H:=(\pi_{I_{14},n}(\mathcal{E}\nu_{I_{14}}(\Omega_{I_{14}})),(|\cdot|_{n,m})_{m\in\N_{0}})$ 
is a locally convex Hausdorff space and by the Hahn-Banach theorem exists 
$w\in H'$ such that $w_{\mid G}=\widetilde{w}$.

Let $f\in\mathcal{E}\nu_{I_{14},\overline{\partial}}(\Omega_{I_{14}})$ and 
$\varphi$ like in \prettyref{lem:prepare_convolution} d). 
By \prettyref{lem:dense_comp_supp} there exists a sequence 
$(\psi_{l})_{l\in\N}$ in $\mathcal{C}^{\infty}_{c}(\Omega_{I_{14}})$ which
converges to $f$ with respect to $(|\cdot|_{I_{4},m})_{m\in\N_{0}}$ and thus $(\overline{\partial}\psi_{l})_{l\in\N}$ 
to $\overline{\partial}f=0$ as well since
\[
 \overline{\partial}\colon \mathcal{E}\nu_{I_{4}}(\Omega_{I_{4}})\to\mathcal{E}\nu_{I_{4}}(\Omega_{I_{4}})
\]
is continuous. Therefore we obtain
\begin{align*}
 \langle \widetilde{w},\pi_{I_{14},n}(f)\rangle 
&=\langle \widetilde{w},f_{\mid\Omega_{n}}\rangle 
 =\langle w,f_{\mid\Omega_{n}}\rangle 
 \underset{n<I_{4}}{=} \lim_{l\to\infty}\langle w,{\psi_{l}}_{\mid\Omega_{n}}\rangle
 =\lim_{l\to\infty}\langle w,(\delta\ast\psi_{l})_{\mid\Omega_{n}}\rangle\\
&\underset{\mathclap{\ref{lem:prepare_convolution}\,a)}}{=}\;\;
 \lim_{l\to\infty}\langle w,(T_{E_{I_{14}}}\ast\overline{\partial}\psi_{l})_{\mid\Omega_{n}}\rangle
 =\lim_{l\to\infty}\langle w\ast_{1} T_{\check{E}_{I_{14}}},\overline{\partial}\psi_{l}\rangle\\
&\underset{\mathclap{\ref{lem:dense_component}}}{=}\;
 \lim_{l\to\infty}\langle w\ast_{1} T_{\check{E}_{I_{14}}},\varphi\overline{\partial}\psi_{l}\rangle
 =\lim_{l\to\infty}\langle w ,(T_{E_{I_{14}}}\ast\varphi\overline{\partial}\psi_{l})_{\mid\Omega_{n}}\rangle\\
&=\lim_{l\to\infty}\langle w\ast_{\varphi} T_{\check{E}_{I_{14}}},\overline{\partial}\psi_{l}\rangle
 \underset{\eqref{lem4.0.1}}{=}\langle w\ast_{\varphi} T_{\check{E}_{I_{14}}},\overline{\partial}f\rangle
 =0,
\end{align*}
so $\widetilde{w}=0$, yielding the statement due to the bipolar theorem. In particular, it follows from the choice 
$\iota_{1}(n):=I_{14}(n)$ and $\iota_{2}(n):=I_{214}(n)$ that $\mathcal{EV}_{\overline{\partial}}(\Omega)$ is very weakly reduced.
\end{proof}

The results obtained so far give rise to the following corollary of our main result. 

\begin{cor}\label{cor:CR_surjective_WR}
Let $(PN)$ with $\psi_{n}(z):=(1+|z|^{2})^{-2}$, $z\in\Omega$, and $(WR)$ with $I_{214}(n)\geq I_{14}(n+1)$ be fulfilled and $-\ln\nu_{n}$ be subharmonic on $\Omega$ for every $n\in\N$.
If $E$ is a Fr\'echet space over $\C$, then
\[
\overline{\partial}^{E}\colon \mathcal{EV}(\Omega,E)\to\mathcal{EV}(\Omega,E)
\]
is surjective.
\end{cor}
\begin{proof}
It follows from \prettyref{thm:dense_proj_lim} that $\mathcal{EV}_{\overline{\partial}}(\Omega)$ 
is very weakly reduced with 
$\iota_{1}(n):=I_{14}(n)$ and $\iota_{2}(n):=I_{214}(n)$ for $n\in\N$. 
Thus \prettyref{cor:frechet_CR_surjective} yields our statement.
\end{proof} 

\begin{exa}\label{ex:families_of_weights_2}
Let $\Omega\subset\C$ be a non-empty open set and $(\Omega_{n})_{n\in\N}$ a family of open sets such that
\begin{enumerate}
\item [(i)] $\Omega_{n}:=\{z\in \Omega\;|\;|\im(z)|<n\;\text{and}\;
 \d^{|\cdot|}(\{z\},\partial \Omega)>1/n \}$ for all $n\in\N$.
\item [(ii)] $\Omega_{n}:=\mathring K_{n}$ for all $n\in\N$ where 
 $K_{n}:=\overline{\mathbb{B}_{n}(0)}\cap\{z\in\Omega\;|\; \d^{|\cdot|}(\{z\},\partial\Omega)\geq 1/n\}$. 
\end{enumerate}
The following families $\mathcal{V}:=(\nu_{n})_{n\in\N}$ of continuous weight functions fulfil the assumptions of 
\prettyref{cor:CR_surjective_WR}:
\begin{enumerate}
\item [a)] Let $(a_{n})_{n\in\N}$ be strictly increasing such that $a_{n}\leq 0$ for all $n\in\N$ and 
\[
 \nu_{n}\colon\Omega\to (0,\infty),\;\nu_{n}(z):=e^{a_{n}|z|^{\gamma}},
\]
for some $0<\gamma\leq 1$ with $(\Omega_{n})_{n\in\N}$ from (i).
\item [b)] $\nu_{n}(z):=1$, $z\in\Omega$, with $(\Omega_{n})_{n\in\N}$ from (ii).
\end{enumerate}
\end{exa}
\begin{proof}
For each family $(\Omega_{n})_{n\in\N}$ in (i) and (ii) it holds that $\Omega_{n}\neq\C$ and there is $N\in\N_{0}$ 
such that $\Omega_{n}\neq\varnothing$ for all $n\geq N$. 
Hence we assume w.l.o.g.\ that $\Omega_{n}\neq \varnothing$ for every $n\in\N$ in what follows. 
In all the examples $(PN)$ is fulfilled for $\psi_{n}(z):=(1+|z|^{2})^{-2}$ 
by \prettyref{ex:families_of_weights_1} for all $q\in\N$. 
Further, we choose $I_{j}(n):=2n$ for $j=1,2,4$ and define the open set $X_{I_{2}(n)}:=\overline{\Omega}_{4n}^{C}$. 
Then we have
 \[
  I_{214}(n)=8n\geq 4n+4=I_{14}(n+1),\quad n\in\N.
 \]
The function $-\ln \nu_{n}$ is subharmonic on $\Omega$ for the considered weights by 
\cite[Corollary 1.6.6, p.\ 18]{H3} and \cite[Theorem 1.6.7, p.\ 18]{H3} since the function $z\mapsto z$ 
is holomorphic 
and $-a_{n}\geq 0$. 
Furthermore, we have $\d_{n,k}=|1/n-1/k|$ if $\partial \Omega\neq\varnothing$ and $\d_{n,k}=|n-k|$ if $\Omega=\C$ in (i) 
as well as $\d_{n,k}\geq |1/n-1/k|$ if $\partial \Omega\neq\varnothing$ and $\d_{n,k}=|n-k|$ if $\Omega=\C$ in (ii).
\begin{enumerate}
 \item [a)] $(WR.1a)$: The choice $K:=\overline{\Omega}_{n}$, if $\Omega_{n}$ is bounded, and 
  \[
   K:=\overline{\Omega}_{n}\cap\{z\in\C\;|\; |\re(z)|\leq \max(0,\ln(\varepsilon)/(a_{n}-a_{2n}))^{1/\gamma}+n\},
  \]
 if $\Omega_{n}$ is unbounded, guarantees that this condition is fulfilled.
 
 $(WR.1b)$: We have $\d_{X,I_{2}}=1/(2n)$ if $\partial\Omega\neq\varnothing$
 and $\d_{X,I_{2}}=2n$ if $\Omega=\C$ for $(\Omega_{n})_{n\in\N}$ from (i). 
 We choose $g_{n}\colon\C\to\C$, $g_{n}(z):=\exp(-z^2)$, 
 as well as $r_{n}:=1/(4n)$ and $R_{n}:=1/(6n)$ for $n\in\N$. 
 Let $z\in\Omega_{I_{2}(n)}$ and $x\in X_{I_{2}(n)}+\mathbb{B}_{R_{n}}(0)$. For $\zeta=\zeta_{1}+i\zeta_{2}\in\C$ 
 with $|\zeta-(z-x)|=r_{n}$ we have 
 \begin{align*}
 |g_{n}(\zeta)|\nu_{I_{2}(n)}(z)&=e^{-\re(\zeta^{2})}e^{a_{2n}|z|^{\gamma}}\leq e^{-\zeta_{1}^{2}+\zeta_{2}^{2}}
  \leq e^{(r_{n}+|z_{2}|+|x_{2}|)^{2}}e^{-\zeta_{1}^{2}}\\
 &\leq e^{(r_{n}+2n+|x_{2}|)^{2}}=:A_{2}(x,n)
 \end{align*}
 and observe that $A_{2}(\cdot,n)$ is continuous and thus locally bounded on $X_{I_{2}(n)}$.
 
 $(WR.1c)$: Let $K\subset\C$ be compact and $x=x_{1}+ix_{2}\in\Omega_{n}$. Then there 
 is $b>0$ such that $|y|\leq b$ for all $y=y_{1}+iy_{2}\in K$ and from polar coordinates and Fubini's theorem 
 follows that
 \begin{flalign*}
 &\hspace{0.37cm}\int_{K}\frac{|g_{n}(x-y)|}{|x-y|}\d y\\
 &=\int_{K}\frac{e^{-\re((x-y)^{2})}}{|x-y|}\d y
  \leq \int_{\mathbb{B}_{1}(x)}\frac{e^{-\re((x-y)^{2})}}{|x-y|}\d y
  +\int_{K\setminus \mathbb{B}_{1}(x)}\frac{e^{-\re((x-y)^{2})}}{|x-y|}\d y\\
 &\leq \int_{0}^{2\pi}\int_{0}^{1}\frac{e^{-r^{2}\cos(2\varphi)}}{r}r\d r\d\varphi
  + \int_{K\setminus \mathbb{B}_{1}(x)}e^{-\re((x-y)^{2})}\d y\\
 &\leq 2\pi e+\int_{-b}^{b}e^{(x_{2}-y_{2})^{2}}\d y_{2}\int_{\R}e^{-(x_{1}-y_{1})^{2}}\d y_{1}
  \leq 2\pi e+2be^{(|x_{2}|+b)^{2}}\int_{\R}e^{-y_{1}^{2}}\d y_{1}\\
 &=2\pi e+2\sqrt{\pi}be^{(|x_{2}|+b)^{2}}
  \leq 2\pi e+2\sqrt{\pi}be^{(n+b)^{2}}.
 \end{flalign*}
 We conclude that $(WR.1c)$ holds since $\nu_{n}\leq 1$.
 
 $(WR.2)$: Let $p,k\in\N$ with $p\leq k$. For all $x=x_{1}+ix_{2}\in\Omega_{p}$ 
 and $y=y_{1}+iy_{2}\in\Omega_{I_{4}(n)}$ we note that
 \begin{align*}
 a_{p}|x|^{\gamma}-a_{k}|y|^{\gamma}
 &\leq -a_{k}|y-x|^{\gamma}= -a_{k}|x-y|^{\gamma}\leq -a_{k}(|x_{1}-y_{1}|+|x_{2}-y_{2}|)^{\gamma}\\
 &\leq -a_{k}(1+|x_{1}-y_{1}|+|x_{2}-y_{2}|)
 \end{align*}
 because $(a_{n})_{n\in\N}$ is non-positive and increasing and $0<\gamma\leq 1$. We deduce that
 \begin{flalign*}
 &\hspace{0.37cm}\int_{\Omega_{I_{4}(n)}}\frac{|g_{n}(x-y)|\nu_{p}(x)}{|x-y|\nu_{k}(y)}\d y\\
 &=\int_{\Omega_{2n}}\frac{e^{-\re((x-y)^{2})}}{|x-y|}e^{a_{p}|x|^{\gamma}-a_{k}|y|^{\gamma}}\d y
  \leq \int_{\Omega_{2n}}\frac{e^{-\re((x-y)^{2})}}{|x-y|}e^{-a_{k}|x-y|^{\gamma}}\d y\\
 &\leq \int_{0}^{2\pi}\int_{0}^{1}\frac{e^{-r^{2}\cos(2\varphi)}}{r}e^{-a_{k}r^{\gamma}}r\d r\d\varphi
  + \int_{\Omega_{2n}\setminus \mathbb{B}_{1}(x)}e^{-\re((x-y)^{2})}e^{-a_{k}|x-y|^{\gamma}}\d y\\
 &\leq 2\pi e^{1-a_{k}}+e^{-a_{k}}\int_{-2n}^{2n}e^{(x_{2}-y_{2})^{2}-a_{k}|x_{2}-y_{2}|}\d y_{2}
        \int_{\R}e^{-(x_{1}-y_{1})^{2}-a_{k}|x_{1}-y_{1}|}\d y_{1}\\
 &\leq 2\pi e^{1-a_{k}}+4ne^{-a_{k}+(|x_{2}|+2n)^{2}-a_{k}(|x_{2}|+2n)}
    \int_{\R}e^{-y_{1}^{2}-a_{k}|y_{1}|}\d y_{1}\\
 &=2\pi e^{1-a_{k}}+4ne^{-a_{k}+(|x_{2}|+2n)^{2}-a_{k}(|x_{2}|+2n)}
    \int_{\R}e^{-(|y_{1}|+a_{k}/2)^{2}+a_{k}^{2}/4}\d y_{1}\\
 &=2\pi e^{1-a_{k}}+8ne^{-a_{k}+(|x_{2}|+2n)^{2}-a_{k}(|x_{2}|+2n)+a_{k}^{2}/4}
    \int_{a_{k}/2}^{\infty}e^{-y_{1}^{2}}\d y_{1}\\
 &\leq 2\pi e^{1-a_{k}}+8\sqrt{\pi}ne^{-a_{k}+(|x_{2}|+2n)^{2}-a_{k}(|x_{2}|+2n)+a_{k}^{2}/4}\\
 &\leq 2\pi e^{1-a_{k}}+8\sqrt{\pi}ne^{-a_{k}+(p+2n)^{2}-a_{k}(p+2n)+a_{k}^{2}/4}\\
 &\leq 2\pi e^{1-a_{I_{4}(n)}}+8\sqrt{\pi}ne^{-a_{I_{4}(n)}+(I_{14}(n)+2n)^{2}-a_{I_{4}(n)}(I_{14}(n)+2n)+a_{I_{4}(n)}^{2}/4}
\end{flalign*}
for $(k,p)=(I_{4}(n),n)$ and $(k,p)=(I_{14}(n),I_{14}(n))$ as $(-a_{n})_{n\in\N}$ is non-negative and decreasing.

$(WR.3)$: Let $M\subset\overline{\Omega}_{n}$ be closed and $N$ a component of $M^{C}$ 
 such that $N\cap\overline{\Omega}_{n}^{C}\neq\varnothing$. 
 We claim that $N\cap X_{I_{214}(n)}=N\cap\overline{\Omega}_{16n}^{C}\neq\varnothing$. 
 We note that $\overline{\Omega}_{16n}^{C}\subset\overline{\Omega}_{n}^{C}\subset M^{C}$ and 
 \begin{align*}
  \overline{\Omega}_{k}^{C}&=\phantom{\cup}\{z\in \C\;|\;\im(z)>k\}\cup\{z\in \C\;|\;\im(z)<-k\} \\
  &\phantom{=}\cup\{z\in\C\;|\;\d^{|\cdot|}(\{z\},\partial \Omega)<1/k\} 
  =:S_{1,k}\cup S_{2,k}\cup S_{3,k},\quad k\in\N.
 \end{align*}
 If there is $x\in N\cap\overline{\Omega}_{n}^{C}$ with $\im(x)>n$ or $\im(x)<-n$, then 
 $S_{1,16n}\subset S_{1,n}\subset N$ or $S_{2,16n}\subset S_{2,n}\subset N$ since $S_{1,n}$ and $S_{2,n}$ are connected 
 and $N$ a component of $M^{C}$. 
 If there is $x\in N\cap\overline{\Omega}_{n}^{C}$ such that $x\in S_{3,n}$, then there is $y\in\partial\Omega$ with 
 $x\in \mathbb{B}_{1/n}(y)\subset S_{3,n}$. This implies $\mathbb{B}_{1/(16n)}(y)\subset\mathbb{B}_{1/n}(y)\subset N$ 
 as $\mathbb{B}_{1/n}(y)$ is connected and $N$ a component of $M^{C}$, proving our claim.

 \item [b)] $(WR.1a)$: The choice $K:=\overline{\Omega}_{n}$ guarantees that this condition is fulfilled.
 
 $(WR.1b)$: We have $\d_{X,I_{2}}\geq 1/(2n)$ if $\partial\Omega\neq\varnothing$ 
 and $\d_{X,I_{2}}= 2n$ if $\Omega=\C$ for $(\Omega_{n})_{n\in\N}$ from (ii). 
 We choose $g_{n}\colon\C\to\C$, $g_{n}(z):=1$, as well as $r_{n}:=1/(4n)$ and $R_{n}:=1/(6n)$ for $n\in\N$. 
 Let $z\in\Omega_{I_{2}(n)}$ and $x\in X_{I_{2}(n)}+\mathbb{B}_{R_{n}}(0)$. For $\zeta=\zeta_{1}+i\zeta_{2}\in\C$ 
 with $|\zeta-(z-x)|=r_{n}$ we have $|g_{n}(\zeta)|\nu_{I_{2}(n)}(z)=1=:A_{2}(x,n)$.
 
 $(WR.1c)$: Let $K\subset\C$ be compact and $x=x_{1}+ix_{2}\in\Omega_{n}$. 
 Again, it follows from polar coordinates and Fubini's theorem that
 \begin{flalign*}
 &\hspace{0.37cm}\int_{K}\frac{|g_{n}(x-y)|}{|x-y|}\d y\\
 &=\int_{K}\frac{1}{|x-y|}\d y
  \leq \int_{\mathbb{B}_{1}(x)}\frac{1}{|x-y|}\d y
  +\int_{K\setminus \mathbb{B}_{1}(x)}\frac{1}{|x-y|}\d y\\
 &\leq \int_{0}^{2\pi}\int_{0}^{1}\frac{1}{r}r\d r\d\varphi
  + \int_{K\setminus \mathbb{B}_{1}(x)}1\d y\\
 &\leq 2\pi +\lambda(K),
 \end{flalign*}
 yielding $(WR.1c)$ because $\nu_{n}=1$. 
 
 $(WR.2)$: Follows from $(WR.1c)$.
 
 $(WR.3)$: Let $M\subset\overline{\Omega}_{n}$ be closed and $N$ a component of $M^{C}$ 
 such that $N\cap\overline{\Omega}_{n}^{C}\neq\varnothing$. 
 We claim that $N\cap X_{I_{214}(n)}=N\cap\overline{\Omega}_{16n}^{C}\neq\varnothing$. 
 We note that $\overline{\Omega}_{16n}^{C}\subset\overline{\Omega}_{n}^{C}\subset M^{C}$ and 
 \begin{align*}
  \overline{\Omega}_{k}^{C}&=\{z\in \C\;|\;|z|>k\}\cup\{z\in\C\;|\;\d^{|\cdot|}(\{z\},\partial \Omega)<1/k\}\\ 
  &=:S_{1,k}\cup S_{2,k},\quad k\in\N.
 \end{align*}
 If there is $x\in N\cap\overline{\Omega}_{n}^{C}$ with $|x|>n$, then 
 $S_{1,16n}\subset S_{1,n}\subset N$ since $S_{1,n}$ is connected 
 and $N$ a component of $M^{C}$. 
 If there is $x\in N\cap\overline{\Omega}_{n}^{C}$ such that $x\in S_{2,n}$, then there is $y\in\partial\Omega$ with 
 $x\in \mathbb{B}_{1/n}(y)\subset S_{2,n}$. This implies $\mathbb{B}_{1/(16n)}(y)\subset\mathbb{B}_{1/n}(y)\subset N$ 
 as $\mathbb{B}_{1/n}(y)$ is connected and $N$ a component of $M^{C}$, proving our claim.
\end{enumerate}
\end{proof}

Due to \prettyref{ex:families_of_weights_2} b) we get \cite[Theorem 1.4.4, p.\ 12]{H3} back.
For certain non-metrisable spaces $E$ the surjectivity of the Cauchy-Riemann operator 
in \prettyref{ex:families_of_weights_2} a) 
for $a_{n}=-1/n$, $n\in\N$, $\partial\Omega\subset \R$ and $\gamma=1$ 
is proved in \cite[5.24 Theorem, p.\ 95]{ich} by 
using the splitting theory of Vogt \cite{V1} and of Bonet and Doma\'nski \cite{Dom1} and that 
$\mathcal{EV}_{\overline{\partial}}(\Omega)$ has property $(\Omega)$ (see \cite[Definition, p.\ 367]{meisevogt1997}) 
in this case by \cite[5.20 Theorem, p.\ 84]{ich} and \cite[5.22 Theorem, p.\ 92]{ich}. This is generalised in \cite{kruse2019_1}. 

\subsection*{Acknowledgements}
This work is a generalisation of a part of the author’s Ph.D thesis \cite{ich}, written under
the advice of M. Langenbruch. The author would like to express his utmost
gratitude to him. Further, it is worth to mention that some of the results appearing in the Ph.D thesis 
and thus their generalised counterparts in the present paper are essentially due to him.
\bibliography{biblio}
\bibliographystyle{plainnat}
\end{document}